\newtheorem{prop}{Proposition}[section]
\newtheorem{lem}{Lemma}[section]
\newtheorem{conj}{Conjecture}[section]
\newtheorem{thm}{Theorem} 
\newtheorem{remark}{Remark}[section]
\newtheorem{defi}{Definition}[section]
\numberwithin{equation}{section}
\newcommand{\be}{\begin{equation} \label}
\newcommand{\ee}{\end{equation}}
\newcommand{\R}{\mathbb{R}}
\newcommand{\eps}{\varepsilon}
\newcommand{\vap}{\varphi}
\newcommand{\ts}{\textstyle}
\begin{document}

\title[Entire and ancient solutions of diffusive Hamilton-Jacobi equations]
{CLASSIFICATION OF ENTIRE AND ANCIENT SOLUTIONS \\
				OF THE DIFFUSIVE HAMILTON-JACOBI EQUATION}
				
\author[CHABI and SOUPLET]{Loth Damagui CHABI and Philippe SOUPLET}

		\address{Universit\'e Sorbonne Paris Nord \& CNRS UMR 7539, 
		Laboratoire Analyse G\'eom\'etrie et Applications, 93430 Villetaneuse,  France.}
	\email{chabi@math.univ-paris13.fr; souplet@math.univ-paris13.fr}

\begin{abstract}
We  study the Liouville type classification and symmetry properties,
in $\R^n$ and in a half-space with Dirichlet boundary conditions,
for entire and ancient solutions of the diffusive Hamilton-Jacobi equation
$$u_t-\Delta u=|\nabla u|^p \eqno(E)$$
 with $p>1$, which arises in optimal stochastic control, 
 in KPZ type models of surface growth and in studies of gradient blow-up.

\vskip 1pt

$\bullet${\hskip 1.5mm}First, we show the optimal Liouville type theorem in $\R^n$:
any ancient solution  in $\R^n$ 
 with sublinear upper growth  at infinity
is necessarily constant. 
This result solves a long standing open problem.

\vskip 1pt

$\bullet${\hskip 1.5mm}Next we completely classify entire solutions in a half-space for $p>2$:
any entire solution is stationary and one-dimensional. 
The assumption $p>2$ is sharp, in view of explicit examples for $p=2$.

\vskip 1pt

$\bullet${\hskip 1.5mm}Then we show that the situation is also completely different for ancient solutions in a half-space:
there 
 exist nonstationary ancient solutions for all $p>1$.
 Nevertheless, we 
 show that any ancient solution is necessarily positive, and that
stationarity and one-dimensionality are recovered provided a -- close to optimal --
polynomial growth restriction 
is imposed on the solution.

\vskip 1pt

$\bullet${\hskip 1.5mm}In addition we establish new and optimal, local estimates of Bernstein and Li-Yau type.
 Remarkably, whereas the Bernstein estimate is valid for all $p>1$, the~Li-Yau estimate is true if and only if $p\ge 2$.

\vskip 1pt

The 
constancy, stationarity and one-dimensionality
are proved using Bernstein and Li-Yau type estimates,
integral estimates, a translation-compactness procedure and comparison arguments.

\vskip 2pt
\noindent {\bf Keywords.}\ Diffusive Hamilton-Jacobi equation, Liouville-type theorem, entire and ancient solutions, Bernstein-type estimate, Li-Yau estimate

\vskip 2pt
\noindent {\bf MSC Classification.}\ 35F21, 35K55, 35B53, 35B08, 35B45
\end{abstract}

 \maketitle

 \vskip -5mm
 
\section{Introduction}
	 
	 \subsection{Background on the diffusive Hamilton-Jacobi equation} \label{intro1}
	
The diffusive Hamilton-Jacobi equation
\be{eqE0}
u_t-\Delta u=|\nabla u|^p,
\ee
where $p>1$, has a rich background. 
First of all, let us recall that the corresponding Cauchy-Dirichlet problem
\begin{equation}\label{eqE1b}
	\begin{cases}
	u_t-\Delta u=|\nabla u|^p,&x\in\Omega,\ t>0,\\
	u=0,&x\in \partial\Omega,\ t>0,\\
	u(0,x)=u_0(x),&x\in \Omega,
	\end{cases}
	\end{equation}
where $\Omega\subset\R^n$ is a smooth bounded domain,
arises in stochastic control. Namely, for $u_0\in C_0(\overline\Omega)$,
problem~\eqref{eqE1b} has a unique global viscosity solution, which gives the value function of the optimal control problem associated with the stochastic differential system $dX_s =\alpha_s ds+dW_s$, with cost function $|\alpha_s|^{p/(p-1)}$
and zero pay-off at the exit time.
Here $(W_s)_{s>0}$ is a standard Brownian motion, 
 $\alpha_s$ is the control, and $u_0$ represents the distribution of rewards
(see \cite{BaLio04} and cf.~\cite{LL, BaLio04, BarCD, FlMS, AttSou} for more details).
As another motivation,  \eqref{eqE0} corresponds to the so-called deterministic KPZ equation
which, along with its stochastic version,
arises in a well-known model of surface growth by ballistic deposition (cf.~\cite{KPZ86, KS88}
 and see \cite{Ha1, Ha2} for far-reaching developments in the stochastic case).
Also, it can be seen as one the simplest model parabolic problems with first order nonlinearity and, from the point of view of nonlinear parabolic theory, it is thus important to understand its properties 
(cp.~for instance with the extensively studied equation with zero order nonlinearity $u_t-\Delta u= u^p$; see \cite{QSb19}).

\vskip 2pt

Although \eqref{eqE1b}  admits a global viscosity solution, the solution need not be classical in general.
In fact, 
for sufficiently smooth initial data, say 
$u_0\in X:=\bigl\{u_0\in C^1(\bar\Omega);\ u_0=0\hbox{ on $\partial\Omega$}\bigr\}$,
problem \eqref{eqE1b} has a unique maximal, classical solution $u$.
We denote by $T=T(u_0)\in (0,\infty]$ its existence time 
(and $u$ coincides with the unique global viscosity solution on $(0,T)$).
When $p>2$, it is known that, if $u_0\ge 0$ is suitably large, then the solution blows up
in finite time, i.e., $T(u_0)<\infty$, in which case
\be{defgbu}
\lim_{t\to T_-}\|\nabla u(t)\|_\infty=\infty,
\ee
whereas all solutions are global for $p\in (1,2]$. It is also known that the function
itself remains bounded, while its spatial gradient is the quantity to become unbounded.
Moreover, as a consequence of interior gradient estimates \cite{SZ}, 
it is known that singularities can occur only on the boundary $\partial\Omega$.
The blowup phenomenon that occurs for solutions of \eqref{eqE1b}  is usually referred to as 
(boundary) gradient blowup (GBU). 
The study of GBU phenomena for \eqref{eqE1b}  has attracted a lot of attention in the past decades.
 Results include blowup criteria
 \cite{Alaa, ABG, Sou, HM}, blowup locations \cite{Esteve19, LS10, SZ}, blowup rates
 \cite{CG96,GuoHu,ZL13,QSb19,PS_jmpa18,AttSou,MS1},
 blowup profiles \cite{ARS04, CG96, SZ, PS_imrn16, PS_jmpa18, PS_aihp17, MS1},
 continuation after GBU \cite{BaLio04, PZ_aihp12, PS_aihp17, QR18, PS_jmpa18, FPS2020, MS2}, 
 infinite time GBU \cite{SZ, SV}. 
 For other issues concerning equation \eqref{eqE0}, see for instance
  \cite{BSW, Gil, BPT, BQR, GMT} (large time behavior of global solutions and ergodic problem),  
   \cite{And, CC, BSW, GilGK, CS, BVD13, BVD, CiG, Cir} (regularity, rough initial data and initial traces),
 \cite{PZ_aihp12} (controllability), 
 \cite{BLSS, ILS} (extinction), 
 \cite{Att, QR18, AttB, AttSou1} (more general diffusions),
 as well as the monographs  \cite{BarCD, FlMS} for general references.
  We stress that there is a huge literature on \eqref{eqE0}  and related equations and that our list of references is by no means complete.
 
\vskip 2pt

	 \subsection{Background on Liouville type theorems} \label{intro2}

 Liouville type nonexistence, classification and symmetry (or rigidity) theorems are a central issue 
in the study of nonlinear elliptic and parabolic problems,
as well as in other nonlinear PDE's 
 (see,~e.g.,~\cite{Per, DS}, \cite{MM1, MM2, MZ09}, \cite{KNSS, SeSh} and the references therein,
 for geometric  flows, dispersive equations or the Navier-Stokes equations, respectively).
Beside their intrinsic interest, they have, in conjunction with rescaling methods, many applications for the description of the qualitative behavior of solutions (a priori estimates, space and/or time decay, blow-up 
asymptotics, etc.).
The most relevant cases involve the whole space $D=\R^n$ and the half-space 
$D=\R^n_+:=\{x\in\R^n;\, x_n>0\}$ as spatial domains and, 
in the case of  evolution problems, the so-called {\it entire} (or eternal) and {\it ancient} solutions,
 i.e.~solutions defined on $D\times\R$ or $D\times(-\infty,0)$, respectively.
 In order to place our results in perspective,
let us review a number of related, known Liouville-type non-existence, classification and symmetry results
for elliptic and parabolic problems.

	 \subsubsection{Elliptic equations with zero order nonlinearities}
	 First recall the following classical results
 for the model problem with zero order nonlinearity, where $p>1$:
\be{ellclass}
-\Delta u= u^p.
\ee
	
	\begin{itemize}
\item[$\bullet$] Equation \eqref{ellclass} has no positive classical solution in $\R^n$ if (and only if) 
$p<p_S:=(n+2)/(n-2)_+$ \cite{GS1}.

\item[$\bullet$] If $p=p_S$ then, up to dilations and translations, the only positive classical solution of \eqref{ellclass} in $\R^n$ 
is the Aubin-Talenti bubble $u(x)=c(n)(1+|x|^2)^{2-n\over 2}$ \cite{CGS}.

\item[$\bullet$] If $p\le p_S$, then \eqref{ellclass} has no positive classical solution in $\R^n_+$ with zero boundary conditions \cite{GS2}.

	\end{itemize}
For the half-space problem with a more general nonlinearity $f\in C^1([0,\infty))$:
\be{ellclass2}
-\Delta u= f(u)\ \hbox{ in $\R^n_+$, \ with $u=0$ on $\partial\R^n_+$,}
\ee
the following famous BCN conjecture about symmetry, or one-dimensionality was made in \cite{BCN}:

\goodbreak

 \begin{conj} \label{conjBCN}
(i) If there is a positive bounded solution $u$ of \eqref{ellclass2}, then $u$ depends only on $x_n$
(and it follows that $u_{x_n}>0$ and $f(\sup u)= 0$).
\vskip 2pt

(ii) In particular if $f>0$ on $(0,\infty)$, then \eqref{ellclass2} 
 has no positive bounded solution.
 \end{conj}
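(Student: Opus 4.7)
My plan is to prove (i) in three stages: (a) monotonicity $u_{x_n}>0$ via moving planes in the direction $e_n$; (b) identify the limit profile $U(x'):=\lim_{x_n\to\infty}u(x',x_n)$ and show it is constant; (c) deduce one-dimensionality and $f(\sup u)=0$. Part (ii) will then follow at once, since a monotone bounded positive solution $U(x_n)$ of $-U''=f(U)$ with $U(0)=0$ converges to some $M>0$ with $f(M)=0$, contradicting $f>0$ on $(0,\infty)$.

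For (a), set $u_\lambda(x',x_n):=u(x',2\lambda-x_n)$ and $w_\lambda:=u_\lambda-u$ on the slab $\Sigma_\lambda=\{0<x_n<\lambda\}$. Then $-\Delta w_\lambda=c_\lambda w_\lambda$ with $c_\lambda=(f(u_\lambda)-f(u))/(u_\lambda-u)\in L^\infty$ (using $u\in L^\infty$ and $f\in C^1$), while $w_\lambda\ge 0$ on $\{x_n=0\}$ (since $u=0$ there) and $w_\lambda=0$ on $\{x_n=\lambda\}$. First I would establish $w_\lambda\ge 0$ in $\Sigma_\lambda$ for small $\lambda$ via the maximum principle in narrow domains (valid because $c_\lambda\in L^\infty$), then slide upward by setting $\lambda^*:=\sup\{\lambda>0:w_\mu\ge 0\text{ in }\Sigma_\mu\text{ for all }0<\mu\le\lambda\}$ and aiming to rule out $\lambda^*<\infty$ via a compactness and strong maximum principle argument. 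This yields $u_{x_n}\ge 0$, and the strong maximum principle applied to the linearized equation satisfied by $u_{x_n}$ upgrades this to $u_{x_n}>0$.

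For (b), monotonicity gives a pointwise limit $U(x')$. Using interior elliptic regularity on the translates $u^k(x',x_n):=u(x',x_n+k)$, a subsequence converges in $C^2_{\rm loc}$ to a bounded entire solution $u^\infty$ on $\R^n$ that is $x_n$-independent, so $-\Delta_{x'}U=f(U)$ on $\R^{n-1}$. The task is then to show $U\equiv M$ constant; under sign information on $f$ this can be done by an integration-by-parts energy test with appropriate cutoffs, but in general one would need a Liouville theorem for bounded entire solutions of $-\Delta v=f(v)$. For (c), once $U\equiv M$ is known, a tangential moving planes argument in each direction $\pm e_i$ ($i<n$) -- which is essentially easier because tangential reflections preserve $\R^n_+$ and the boundary/limit values are constant in $x'$ -- gives symmetry about every hyperplane $\{x_i=c\}$, hence $u=u(x_n)$, and the ODE finally yields $f(M)=0$.

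The principal obstacle is twofold. First, closing the $e_n$-moving planes for large $\lambda$ is genuinely delicate because there is no decay of $u$ at infinity: unlike the Gidas--Ni--Nirenberg setting, if $\lambda^*$ were finite one would need to simultaneously control the behavior of $u$ and $u_{\lambda^*}$ as $|x'|\to\infty$. Second, the one-dimensionality of the limit profile $U$ on $\R^{n-1}$ amounts to a Liouville theorem for bounded entire solutions of the full elliptic equation, which is itself difficult and is known to fail without suitable hypotheses on $f$. This is precisely why the full BCN conjecture remains open in general and has so far been established only under additional structural assumptions (low dimension, sign conditions on $f$, stability of the solution, or specific nonlinearities).
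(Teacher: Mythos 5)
The statement you are asked to prove is labeled \emph{Conjecture} in the paper, and it is indeed the famous Berestycki--Caffarelli--Nirenberg conjecture, which the paper explicitly records as essentially open for $n\ge 12$ (part (i)) and known only under additional structural hypotheses on $f$ such as convexity (part (ii), cf.~\cite{CLZ,DSS}). The paper offers no proof of it; it is stated only as background and motivation for the half-space Liouville results the authors prove for the Hamilton--Jacobi equation. So there is nothing in the paper to compare your argument against, and any purported proof in full generality would have to overcome obstacles that the literature has not.

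Your proposal is an honest and technically reasonable sketch of the standard moving-planes route, and to your credit you yourself flag the two genuine gaps: (1) closing the $e_n$-moving plane at a putative finite $\lambda^*$ without any decay of $u$ as $|x'|\to\infty$ -- the compactness/strong-maximum-principle step cannot be carried out on an unbounded slab without a priori control in the tangential directions; and (2) the constancy of the limit profile $U$ on $\R^{n-1}$, which is itself a difficult Liouville-type problem for $-\Delta_{x'}U=f(U)$ and is false without hypotheses on $f$. Because of these two gaps the outline does not constitute a proof, and the deduction $f(\sup u)=0$ and part (ii) both hinge on one-dimensionality already being established, so they inherit the same gaps. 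In short, what you have written is a correct description of \emph{why} the conjecture is hard (and of the partial results recalled in the paper), not a proof of it -- which is consistent with its status as an open conjecture.
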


	\begin{itemize}
\item[$\bullet$] Conjecture~\ref{conjBCN}(i) was proved in \cite{BCN} for $n=2$ and for $n=3$ under the additional hypothesis $f(0)\ge 0$, and then in \cite{DF} (see also \cite{FV}) for $n\le 11$ provided $f\ge 0$ on $[0,s_0)$ and $f\le 0$ on $(s_0,\infty)$ for some $s_0\in(0,\infty]$. The case $n\ge 12$ is essentially open.

\item[$\bullet$] Note that the boundedness assumption cannot be completely removed
in view of counter-examples such as $u=x_n e^{x_1}$, which solves \eqref{ellclass2} for $n\ge 2$ and $f(u)=-u$.
	
\item[$\bullet$] As for Conjecture~\ref{conjBCN}(ii), it was proved in \cite{CLZ} in all dimensions under the additional assumption
that $f\in C^2((0,\infty))$ is convex (which covers in particular the case $f(s)=s^p$ with any $p>1$),
and the boundedness assumption was then weakened to boundedness on finite strips in \cite{DSS}.
	\end{itemize}

	 \subsubsection{Nonlinear heat equation}
\noindent For the parabolic counterpart of \eqref{ellclass}, namely the nonlinear heat equation
\be{parabclass}
u_t-\Delta u= u^p,
\ee
with $p>1$, we have the following results for entire and ancient solutions.

	\begin{itemize}
\item[$\bullet$] If $p<p_S$, then equation \eqref{parabclass} has no nontrivial classical solution $u\ge 0$ in $\R^n\times\R$ \cite{Qu}.
Applications of this (and previous) Liouville type results to universal a priori estimates and initial and final blow-up rates
can be found in \cite{BV,PQS,Qu}.

	\item[$\bullet$] The assumption $p<p_S$ in the preceding item is optimal in view of the existence of positive steady states for $p\ge p_S$.
 A natural question is then:
\be{questionPQ}
\hbox{Are all entire solutions stationary ?}
\ee
This has been investigated in \cite{FY,PQ} and the situation crucially depends on $p$. Namely, letting $p_L=1+6/(n-10)_+$,
the answer is negative for $p\in(p_S,p_L)$,
as there exist bounded positive (radial) entire solutions such that $\|u(t)\|_\infty\to 0$ as $t\to\pm\infty$ \cite{FY}.
On the contrary, for $p>p_L$, any radially symmetric, bounded positive entire solution is stationary
(see \cite{PQ}; but this is not true in general without the radial assumption).
We refer to  \cite{Ta} for related results in the critical case $p=p_S$ and to
\cite{BPQ, Po} for results on sign-changing entire solutions.

\item[$\bullet$] Concerning the half-space case, if $p<p_S$, then \eqref{parabclass} has no nontrivial classical solution $u\ge 0$ in $\R^n_+\times\R$ with zero boundary conditions, and this remains true for bounded solutions if $p<(n+1)/(n-3)_+$ \cite{PQS, Qu}.

	\item[$\bullet$] As for ancient solutions of  \eqref{parabclass}, it was shown in \cite{MZ98} (see also \cite{MZ00,Qu}) that
	if $p<p_S$ and $u\ge 0$ is a classical solution in $\R^n\times(-\infty,0)$,
then $u$ depends only on $t$.
Applications of this classification result to the description of GBU asymptotics were developed in \cite{MZ98, GuoSou}.
On the other hand, for suitable ranges of $p>p_S$, a classification of radial ancient solution was obtained in \cite{PQ},
which shows that they are either stationary or connecting orbits (after passing to similarity variables);
 see also \cite{PY, SWZ} for related results.
Recall that an important class of ancient solutions are the backward self-similar solutions, 
of the form $|t|^{-1/(p-1)}\phi(x/\sqrt{|t|})$, which where studied in \cite{BQ, Le1, Le2} (see also 
\cite[Section~53a]{QSb19} and the references therein).
		\end{itemize}

	 \subsubsection{Diffusive Hamilton-Jacobi equation}
\noindent  Now passing to our main topic, namely equation \eqref{eqE0}, 
we first consider its stationary version, i.e.~the elliptic Hamilton-Jacobi equation,  with $p>1$:
\be{ellHJ}
-\Delta u= |\nabla u|^p.
\ee

	\begin{itemize}
\item[$\bullet$] If $u$ is a classical solution of \eqref{ellHJ} in $\R^n$, 
then $u$ is constant \cite{Lions85}.
	
\item[$\bullet$] If $u$ is a classical solution of \eqref{ellHJ} in $\R^n_+$ with zero boundary conditions,
then $u$ depends only on the variable $x_n$ (see \cite{FPS2020} for $p>2$ and, for $p\in(1,2]$,
see \cite{PV} for bounded solutions and \cite{PS25} for the general case).
Applications of this classification result to the description of GBU asymptotics for  the Dirichlet problem \eqref{eqE1b} 
with $p>2$ in bounded domains were also developed in \cite{FPS2020}.

	\end{itemize}
	
\noindent Concerning whole space ancient solutions of \eqref{eqE0}, the following results are known:
	
		\begin{itemize}
		
\item[$\bullet$]If $p>1$ and  $u$ is a classical solution of  \eqref{eqE0}
in $Q=\R^n\times (-\infty,0)$
such that\footnote{Throughout this article, for a given function $\psi=\psi(x,t)\ge 0$ on $Q$, by the notation ``$\,\cdots\le o(\psi)$ as $\psi\to \infty$'',
we mean that $\dots\le h\circ\psi$ for some locally bounded function 
$h: [0,\infty) \to [0,\infty)$ such that $\lim_{r\to \infty} r^{-1}h(r)=0$.}
\be{hypSZ}
|u(x, t)| = o(|x|^{m} + |t|^{\frac{1}{p}}),\ \hbox{as $|x|+|t|\to\infty$, \ with $m = 1\wedge\frac{1}{p-1}$,}
\ee
then $u$ is constant \cite{SZ}. 
Partial improvements were recently obtained in the case $p>2$ in \cite{Cir},
replacing assumption \eqref{hypSZ} by 
\be{hypCir}
\sup_{(x,t),\, (y,s) \in Q } \frac{|u(x,t) - u(y,s)|}{|x-y|^m + |t-s|^q + 1} < \infty,
\ \hbox{for some $m \in (0,1)$, $q>0$}
\ee
(see~\cite[Theorem~4.1 and Remark~4.2]{Cir}),
and in the case $1<p\le 2$ in \cite{CGM}, replacing assumption \eqref{hypSZ} by 
\be{hypCGM}
u(x, t)\le o(|x|),\ \hbox{as $|x|\to\infty$ \ \ (uniformly in $t<0$);}
\ee
see \cite[Theorem 5.5 and Remark 5.7]{CGM}, and see also \cite{Kruz67,CGM} for related results in the case of subquadratic first-order equations.

 \vskip 2pt
 
\item[$\bullet$]The growth assumptions cannot be completely removed
in view of the existence of the special (entire) solution $u=t+x_1$.
However we observe that there is a gap between this special solution and the assumptions in \eqref{hypSZ}, \eqref{hypCir} and \eqref{hypCGM}. 
In particular, for $p>2$ we have $m=1/(p-1)<1$ in \eqref{hypSZ} whereas,
 although \eqref{hypCir} is close to optimal in the spatial direction and only mildly restrictive in the time direction,
it has the drawback of being a H\"older type condition
and not a pure growth condition like \eqref{hypSZ}. 
As for the range $1<p\le 2$, \eqref{hypCGM} has the advantage as compared with \eqref{hypSZ} of being a one-sided condition.
However, while both reach the sharp value $m=1$ in the space direction, none are sharp in the time direction 
(and \eqref{hypCGM} is more restrictive, by not allowing any growth in time).
\smallskip

The following has thus remained a long standing open problem:
\be{OP}
\begin{aligned}
&\hbox{What is the optimal growth condition to ensure the}\\
&\hbox{constancy of ancient solutions of \eqref{eqE0} in $\R^n\times (-\infty,0)$~?}
\end{aligned}
\ee
	\end{itemize}

	 \subsection{Aims of the paper}
\noindent The goal of this paper is twofold:

\vskip 2pt

		\begin{itemize}
\item[$\bullet$]
To give a complete answer to the open question in \eqref{OP} concerning ancient solutions of \eqref{eqE0} in the whole space.
 
 \vskip 2pt
 
\item[$\bullet$] To  classify entire and ancient solutions of \eqref{eqE0} in the half-space.
Indeed this does not
seem to have been  addressed so far. 
In connection with the BCN conjecture,  with the stationarity question
in \eqref{questionPQ} for the nonlinear heat equation, and with the known classification results for the stationary 
Hamilton-Jacobi equation, we will be especially interested in the questions of unidimensionality and stationarity. 

	\end{itemize}
	
	  \section{Main results}
	
  \subsection{Optimal Liouville type theorem in the whole space}

Our first main result is the following {\it optimal} Liouville type theorem for ancient solutions of \eqref{eqE0}
 in the whole space.

\begin{thm}\label{thmLiouvRn0}
Let $p\ge 2$ and let $u\in C^{2,1}(Q)$ be a solution of \eqref{eqE0} in $Q:={\R^n}\times (-\infty,0)$.
Assume that there exists $t_0\in(-\infty,0)$
such that
\begin{equation} \label{growthassumpt0}
u(x,t_0)\le o(|x|), \quad\hbox{ as $|x|\to\infty$}.
\end{equation}
Then $u$ is constant.
\end{thm}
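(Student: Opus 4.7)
The proof combines the paper's two new local estimates: a Bernstein-type bound on $|\nabla u|$ (valid for all $p>1$) and a Li-Yau-type bound (valid precisely when $p\ge 2$). Applied on an interval $(s,t]$, the Li-Yau estimate schematically gives $u_t(x,t)\ge(\text{nonnegative})-C/(t-s)$; since $u$ is defined back to $-\infty$, letting $s\to-\infty$ kills the correction and yields $u_t\ge 0$ on $Q$. Thus $u$ is non-decreasing in $t$, and the sublinear upper bound at $t_0$ propagates backwards: $u(x,t)\le u(x,t_0)\le\eta(|x|)$ for all $t\le t_0$, where $\eta(R)=o(R)$ as $R\to\infty$.

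Next, I would fix an arbitrary $(x_1,t_1)\in\R^n\times(-\infty,t_0]$ and apply the local Bernstein estimate on the backward parabolic cylinder $Q_R^-:=B_R(x_1)\times(t_1-R^2,t_1)$. Its natural scale-invariant form reads essentially
$$
|\nabla u(x_1,t_1)|\le C\Bigl[R^{-1/(p-1)}+R^{-1}\sup_{Q_R^-}u^+\Bigr].
$$
By the previous paragraph, $\sup_{Q_R^-}u^+\le\eta(R+|x_1|)=o(R)$, so both terms on the right vanish as $R\to\infty$; hence $\nabla u(x_1,t_1)=0$. The equation then forces $u_t=|\nabla u|^p=0$, and we conclude that $u\equiv c$ on $\R^n\times(-\infty,t_0]$ for some constant $c$.

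It remains to extend this to $t\in(t_0,0)$. The function $v:=u-c$ is a classical solution of $v_t-\Delta v=|\nabla v|^p$ on $[t_0,0)\times\R^n$ with $v(\cdot,t_0)\equiv 0$, and $v\ge 0$ by the monotonicity $u_t\ge 0$. A standard forward uniqueness argument --- parabolic comparison with the trivial solution on expanding balls, where the values on $\partial B_R\times[t_0,t]$ are controlled by a further application of the Bernstein estimate together with the monotonicity --- yields $v\equiv 0$, so $u\equiv c$ throughout $Q$.

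The principal difficulty is the precise calibration of the Bernstein bound: its linear dependence on $R^{-1}\sup u^+$ is exactly what turns the $o(|x|)$ hypothesis into vanishing of $\nabla u$ in the limit $R\to\infty$, and it is also what makes the growth condition sharp, since the explicit ancient solution $u(x,t)=a\cdot x+|a|^p t+c$ saturates linear growth. Establishing this scale-optimal Bernstein estimate, and the Li-Yau bound valid precisely when $p\ge 2$, constitutes the real technical heart of the proof; granted these, the Liouville theorem follows as sketched above.
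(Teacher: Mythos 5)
Your first half is sound and matches the paper's Step~1: the Li--Yau estimate \eqref{controlul0} (letting $R\to\infty$ and $a\to 1$) gives $u_t\ge|\nabla u|^p\ge 0$, hence $u(\cdot,t)\le u(\cdot,t_0)$ for $t\le t_0$, and the Bernstein estimate on expanding cylinders then kills $\nabla u$ on $\R^n\times(-\infty,t_0]$; this is precisely Lemma~\ref{lemLiouvRn} after the monotonicity observation. The gap is in your final paragraph. You invoke ``a standard forward uniqueness argument'' to push the constancy past $t_0$, claiming that boundary values on $\partial B_R\times[t_0,t]$ are ``controlled by a further application of the Bernstein estimate together with the monotonicity.'' But $u_t\ge 0$ only yields a \emph{lower} bound $u\ge c$ for $t>t_0$, and the Bernstein estimate \eqref{grabound} bounds $|\nabla u|$ in terms of $M-u$ where $M=\sup u$ --- so it requires an \emph{upper} bound on $u$ as input, which you do not have for $t>t_0$. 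The paper explicitly flags this as the crux: uniqueness for the Cauchy problem without growth restriction cannot be assumed, so one must first derive an a~priori bound on $u$ for $t>t_0$ before any comparison argument can run.

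The paper closes this gap in Step~2 via a non-trivial argument that your proposal omits entirely: it integrates $u_t\ge|\nabla u|^p$ along rays from the origin, combines H\"older's inequality with a blow-up differential inequality (Lemma~\ref{blowupineq}(i)) to obtain $\int_0^R u(r,\omega,t)\,dr\le CR^{\beta+2}$, and then uses subharmonicity of $u(\cdot,t)$ (another consequence of the Li--Yau estimate) together with the mean-value inequality to extract the pointwise upper bound $u\le C(1+|x|)^{\beta+1}$ and, via Bernstein, $|\nabla u|\le C(1+|x|)^\beta$ on $\R^n\times[t_1,t_1/2]$. Only with this polynomial gradient control in hand does the comparison with the explicit supersolution $v_\eps=\eps e^{2Lt}(1+|x|^2)^{\beta+2}$ (Step~3) become legitimate --- the gradient bound is exactly what lets one absorb $|\nabla u|^p=|\nabla u|^{p-1}\cdot|\nabla u|$ into a linear first-order term with linearly growing coefficient, for which a maximum principle on unbounded domains applies. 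Incidentally, the paper also works with the maximal time $t_1:=\max\{\tau:\ u\equiv c\ \mbox{on}\ \R^n\times(-\infty,\tau]\}$ rather than $t_0$, and derives a contradiction from $t_1<0$, so that the conclusion reaches all of $Q$. Your proposal correctly identifies the two key tools but mistakes the forward-in-time extension for a routine step, whereas it is in fact where most of the work for this theorem lives.
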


The optimality is clear from the special traveling wave solution $u(x,t)=t+x_1$
or, more generally, 
\be{TW}
u(x,t)=|a|^pt+a\cdot x,\quad\hbox{with $a\in\R^n\setminus\{0\}$.}
\ee
We also note that \eqref{growthassumpt0} is a  single-time, space only, one-sided condition 
(in particular no lower growth assumption is made).
 For $p\in(1,2)$, we have a similar result, under a space-time sublinearity condition,
which is a bit stronger than \eqref{growthassumpt0} but still essentially optimal in view of the example in~\eqref{TW}.

\begin{thm}\label{thmLiouvRn}
Let $p\in(1,2)$ and let $u\in C^{2,1}(Q)$ be a solution of \eqref{eqE0} in $Q:={\R^n}\times (-\infty,0)$
such that
\begin{equation} \label{growthassumpt}
u(x,t)\le o(|x|+|t|), \quad\hbox{ as $|x|+|t|\to\infty$}.
\end{equation}
Then $u$ is constant.
\end{thm}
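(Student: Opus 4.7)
My plan is to follow the same high-level scheme as for Theorem~\ref{thmLiouvRn0}, replacing the Li--Yau estimate (which fails in the range $p\in(1,2)$, as the abstract emphasizes) by the local Bernstein-type gradient estimate, which is announced as valid for all $p>1$. The role of the space-time sublinearity \eqref{growthassumpt} is precisely to compensate for the absence of Li--Yau: in Theorem~\ref{thmLiouvRn0} the Li--Yau inequality is what allows one to promote a single-time spatial bound into a full parabolic-cylinder bound, whereas here that cylinder control must be assumed outright. Once such control is available, the Bernstein estimate turns it into a vanishing pointwise gradient bound, and constancy of $u$ follows immediately.

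In detail, I would fix an arbitrary $(x_0,t_0)\in Q$ and consider the parabolic cylinder
$$
Q_R:=B_R(x_0)\times(t_0-R,t_0),
$$
whose space and time scales are matched, consistently with the structure $|x|+|t|$ of the hypothesis. Applying the local Bernstein estimate of the paper on $Q_R$ should yield an inequality schematically of the form
$$
|\nabla u(x_0,t_0)|\ \le\ C\,\Phi\bigl(R,M_R\bigr),\qquad M_R:=\sup_{Q_R}\bigl(u-u(x_0,t_0)\bigr)_+,
$$
where $\Phi(R,M_R)\to 0$ as $R\to\infty$ provided $M_R=o(R)$. From \eqref{growthassumpt},
$$
\sup_{Q_R}u\ \le\ o\bigl(|x_0|+R+|t_0|+R\bigr)\ =\ o(R),
$$
so $M_R=o(R)$. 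Letting $R\to\infty$ forces $|\nabla u(x_0,t_0)|=0$. Since $(x_0,t_0)$ was arbitrary, $\nabla u\equiv 0$ on $Q$; hence $u=u(t)$, and inserting this into \eqref{eqE0} gives $u_t=0$, so $u$ is constant.

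The main obstacle lies in the precise form and scaling of the Bernstein estimate. One has to verify that the estimate established earlier in the paper truly yields a right-hand side $\Phi(R,M_R)$ that goes to zero under the sole condition $M_R=o(R)$, on the matched cylinder $B_R\times(t_0-R,t_0)$; this is what dictates the joint $|x|+|t|$ form of \eqref{growthassumpt}, to be contrasted with the purely spatial condition \eqref{growthassumpt0} available when $p\ge2$. A secondary delicate point is that the hypothesis is only a one-sided upper bound on $u$, so one must make sure the Bernstein bound depends on the one-sided quantity $(u-u(x_0,t_0))_+$ rather than on the full oscillation of $u$. This one-sided structure of Bernstein for Hamilton--Jacobi equations is standard, but it is crucial here, since it is exactly what makes the method sharp enough to close the gap to the traveling-wave example $u(x,t)=|a|^pt+a\cdot x$ of \eqref{TW}, which just fails to satisfy \eqref{growthassumpt}.
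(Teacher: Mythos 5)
Your proposal matches the paper's own proof, which (via Lemma~\ref{lemLiouvRn}) likewise fixes $(x_0,t_0)$, applies the Bernstein estimate of Theorem~\ref{propBern} on the matched parabolic cylinder $B_R(x_0)\times(t_0-R,t_0)$, uses \eqref{growthassumpt} to get $M_R=o(R)$, and lets $R\to\infty$ to force $\nabla u(x_0,t_0)=0$. You also correctly single out the two technical points the paper exploits: the joint $|x|+|t|$ condition is needed precisely because the Li--Yau estimate, which for $p\ge 2$ upgrades a single-time spatial bound into a cylinder bound, is unavailable when $p<2$; and the one-sided dependence of the Bernstein bound on $M-u$ is what lets the purely upper hypothesis in \eqref{growthassumpt} suffice.
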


Theorems~\ref{thmLiouvRn0}-\ref{thmLiouvRn} solve the open question \eqref{OP}
(cf.~the above-mentioned results in \cite{SZ,Cir, CGM}).
Especially in the case $p\ge 2$, we see that \eqref{growthassumpt0} is the best possible sufficient condition.
 The proofs rely on the improved Bernstein and Li-Yau type estimates in Theorems~\ref{thmLY} and \ref{propBern} below,
  combined with suitable integral estimates and comparison arguments.

\begin{remark} \rm
Theorem~\ref{thmLiouvRn0} was known before only in the special case $p=2$  (see \cite{Hir,Mo}), which reduces to 
(positive solutions of) the heat equation through the transformation $v=e^u$.
 The proof in that case crucially uses the linearity of the (transformed) problem, a feature no longer available for~$p\ne 2$.
\end{remark}

 \subsection{Liouville type classification results in the half-space}
 
 We next turn to the diffusive Hamilton-Jacobi in the half-space:
	\begin{equation}\label{eqE1}
	\begin{cases}
	u_t-\Delta u=|\nabla u|^p,&(x,t)\in Q:=\R^n_+\times I,\\
	\noalign{\vskip 1mm}
	u=0,&(x,t)\in \partial\R^n_+\times I,\\
	\end{cases}
	\end{equation}
	where $\R^n_+:=\{x=(x_1,\cdots, x_n)\in \R^n: x_n>0\}$  and $p>1$.
Throughout the paper we denote
$$   \beta:=\frac{1}{p-1}$$
and
$$ \Gamma_R=\{x\in\R^n,\ 0<x_n<R\}.$$

	\goodbreak
	
	\begin{defi} \rm
	(i) By a solution $u$ of \eqref{eqE1}, we mean a function $u\in C^{2,1}(Q)\cap C(\overline Q)$
	which satisfies the PDE in $Q$ and the boundary conditions pointwise.
	\vskip 2pt
	
(ii) An {\it ancient} (resp., {\it entire}) {\it solution} of \eqref{eqE1} is a 
solution with $I=(-\infty,0)$ (resp., $I=\R$).
		\end{defi}
 
 We stress that this definition does not impose any a priori growth restriction on the solution at infinity.
  We neither require the solution to be $C^1$ in $x$ on the boundary $x_n=0$. 
  Our first main result on problem \eqref{eqE1} completely classifies entire solutions for $p>2$.

 \begin{thm} \label{thmentire}
 Let $p>2$ and let $u$ be an entire solution of \eqref{eqE1}. 
 Then $u$ is stationary and depends only on the variable~$x_n$.
 \end{thm}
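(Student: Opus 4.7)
My plan is to combine the paper's Bernstein and Li-Yau estimates with a translation-compactness argument and the known classification of stationary solutions in a half-space from~\cite{FPS2020}. First, I would apply Theorem~\ref{propBern} on slabs $\R^n_+\times(t_0,t)$ and let $t_0\to-\infty$ to obtain a time-independent gradient bound of the form $|\nabla u(x,t)|\le C\,x_n^{-\beta}$ throughout $Q$. Since $p>2$ forces $\beta<1$, integrating normally from the boundary (where $u=0$) gives the sublinear control $0\le u(x,t)\le C\,x_n^{1-\beta}$; positivity $u\ge 0$ is supplied by the general positivity result for ancient solutions in a half-space announced later in the paper.

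Next I would invoke the Li-Yau estimate (Theorem~\ref{thmLY}, available precisely for $p\ge 2$), again letting $t_0\to-\infty$, to deduce the one-sided temporal bound $u_t\ge 0$. Combined with the spatial bound just obtained and standard parabolic regularity, this yields the existence of the monotone limits
$$u_\pm(x):=\lim_{t\to\pm\infty}u(x,t),$$
with $C^{2,1}_{loc}$ convergence of the corresponding time translates. Stability under this convergence shows that $u_\pm$ are classical solutions of the stationary half-space problem $-\Delta u_\pm=|\nabla u_\pm|^p$ with zero Dirichlet data. The classification result of~\cite{FPS2020} (valid since $p>2$) then gives $u_\pm(x)=\phi_\pm(x_n)$, and an elementary ODE analysis yields $\|\phi_+-\phi_-\|_\infty<\infty$.

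The heart of the argument is to show $\phi_+\equiv\phi_-$, which combined with the sandwich $\phi_-\le u\le\phi_+$ immediately gives stationarity and hence the desired 1D symmetry. Fixing $\tau>0$, set
$$v(x,t):=u(x,t+\tau)-u(x,t).$$
Then $v\ge 0$ is bounded, $v=0$ on $\partial\R^n_+$, $v\to 0$ as $t\to\pm\infty$ locally uniformly in $x$, and $v$ satisfies a linear parabolic equation $v_t-\Delta v=\vec b(x,t)\cdot\nabla v$ whose drift $\vec b$ is locally bounded in $\overline{\R^n_+}\times\R$ by the Bernstein control. Assuming $M:=\sup v>0$, pick maximizers $(x_k,t_k)$, reduce by tangential translations to $x_k=y_ke_n$, and pass to subsequences with $y_k\to y_*\in[0,\infty]$ and $t_k\to t_*\in[-\infty,\infty]$. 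The case $y_*=0$ is ruled out by the sublinear bound on $u$. The case $y_*=\infty$ is reduced to a whole-space problem: after subtracting $u(y_ke_n,t_k)$ and translating by $(y_ke_n,t_k)$, the decay $|\nabla u|\lesssim x_n^{-\beta}$ forces the translated sequence to converge in $C^{2,1}_{loc}(\R^n\times\R)$ to the zero function, so $v(x_k,t_k)\to 0$. The case $y_*\in(0,\infty)$ with $t_*\in\{\pm\infty\}$ is ruled out because $v\to 0$ locally uniformly. In the remaining case $y_*\in(0,\infty)$, $t_*\in\R$, extracting a $C^{2,1}_{loc}$-limit $v_\infty$ of the tangentially translated sequence, we find that $v_\infty$ attains its positive interior maximum $M$ at $(y_*e_n,t_*)$; the strong maximum principle for linear parabolic equations then forces $v_\infty\equiv M$ throughout the backward parabolic cone $\R^n_+\times(-\infty,t_*]$, contradicting $v_\infty=0$ on $\partial\R^n_+$. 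Hence $M=0$, $v\equiv 0$ for every $\tau>0$, $u$ is stationary, and the conclusion follows from~\cite{FPS2020}.

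\emph{Main obstacle.} The most delicate point is the dichotomy in the last paragraph, in particular the case $y_*=\infty$: one must ensure that the rescaled/translated sequence does not produce a nontrivial whole-space entire solution. This relies crucially on the Bernstein bound providing genuine decay $|\nabla u|\lesssim x_n^{-\beta}\to 0$ at infinity rather than mere boundedness, and on the exact normalization by $u(y_ke_n,t_k)$ to kill any additive drift in the limit.
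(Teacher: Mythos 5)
Your plan has two genuine gaps, and they sit at the very foundation of the argument.

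\textbf{Circularity in the a priori bounds.} You propose to first apply the Bernstein estimate (Theorem~\ref{propBern}) to obtain $|\nabla u|\le C\,x_n^{-\beta}$, and then to integrate this from the boundary to derive $u\le C\,x_n^{1-\beta}$. But Theorem~\ref{propBern} does not unconditionally give $|\nabla u|\le C\,x_n^{-\beta}$: at a point at distance $h$ from $\partial\R^n_+$ it produces a bound in terms of $M-u$, where $M$ is the local supremum of $u$ in $B_{h/2}\times(t_0,t)$. Without first controlling $M$, you cannot produce the claimed $x_n^{-\beta}$ decay; in fact to extract $|\nabla u|\lesssim x_n^{-\beta}$ from \eqref{grabound} you need the upper bound $u\le C\,x_n^{1-\beta}$ \emph{as input}. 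Similarly, the positivity $u\ge 0$ which you invoke to replace $M-u$ by $M$ is proved in the paper (for $p>2$) \emph{from} the a priori estimate \eqref{controludsqa}, not independently of it. So your first two steps presuppose exactly the estimate they aim to establish. In the paper this is resolved by the entirely separate machinery of Section~4: weighted integral estimates (Proposition~\ref{1L3}) combined with the Li-Yau estimate to get pointwise bounds (Lemma~\ref{1L3b} and Theorem~\ref{thmancient3}(ii)), and \emph{only then} Bernstein with a scaling argument. That block of work is missing from your proposal.

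\textbf{$u_t\ge 0$ does not follow from Li-Yau in the half-space.} You assert that Theorem~\ref{thmLY}, after letting $t_0\to-\infty$, gives $u_t\ge 0$. Estimate \eqref{controlul0} is stated on $B_R\times(0,T)$ and, applied in the half-space at a point with $x_n=h$, it forces $R\le h$ because the ball must stay inside $\R^n_+$. After $t_0\to-\infty$ you are left with $u_t\ge a|\nabla u|^p-CR^{-\beta-1}$ with $R\le h$, i.e.\ at best $u_t\ge -Ch^{-\beta-1}$. There is no parameter to send to infinity, so the one-sided bound $u_t\ge 0$ is not obtained. (In the whole-space case this step is valid precisely because one can let $R\to\infty$; that is what makes the half-space genuinely harder.) Since your subsequent construction of the monotone limits $u_\pm$, the sandwich $\phi_-\le u\le\phi_+$, and the positivity $v\ge 0$ of the time-difference all hinge on $u_t\ge 0$, this gap propagates through the whole second half of the argument.

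For contrast, the paper's proof of Theorem~\ref{thmentire} avoids both issues: it first applies Theorem~\ref{thmancient3}(ii) to the time-shifted ancient solutions and lets the shift go to $+\infty$, yielding $u\le C\,x_n^{1-\beta}$ for all $(x,t)$; then Theorem~\ref{thmancient} (a translation-compactness argument that does \emph{not} use any sign of $u_t$, but only the uniform spatial decay of $|\nabla u|$ and $|u_t|$ from Bernstein and Schauder) gives the stationarity and one-dimensionality directly. Your closing sliding argument for $\phi_+\equiv\phi_-$ is in spirit close to the paper's compactness step, but it cannot be started without the two missing ingredients above.
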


 We stress that Theorem~\ref{thmentire} stands in sharp contrast with the case $p=2$.
 Indeed, problem \eqref{eqE1} for $p=2$ admits (positive) 
 entire solutions which are neither stationary nor one-dimensional; see Remark~\ref{rem3}(iii).

\smallskip

\begin{remark} \rm
(i) Theorem~\ref{thmentire} extends \cite{FPS2020} where, as mentioned before,
the conclusion was obtained for the special case of stationary solutions.  
All nontrivial  solutions $u=u(x_n)$ of \eqref{eqE1}  with $p>2$ are explicitly given by
\be{ODEsol}
u(x)=c_p\bigl((x_n+a)^{1-\beta}-a^{1-\beta}\bigr),\quad a\ge 0
\ee
where $1-\beta=\ts\frac{p-2}{p-1}\in(0,1)$ and $c_p=(1-\beta)^{-1}\beta^\beta$. 

 \smallskip

(ii) Applications of Theorem~\ref{thmentire} to the description of blow-up asymptotics 
of solutions of the Cauchy-Dirichlet problem associated with \eqref{eqE1}
will be developed in a forthcoming paper. 
  \end{remark}
 
  It is a natural question whether the classification in Theorem~\ref{thmentire} remains true for ancient solutions.
 Interestingly, it turns out that this is {\it not} the case, as our next result shows the existence of nonstationary ancient solutions.

 \begin{thm}  \label{thmancient2}
 Let $p>1$. There exists a positive, nonstationary ancient solution $u$ of \eqref{eqE1}.
 Namely, $u$ can be found under backward self-similar form 
\be{selfsimilsol}
u(x,t)=|t|^\gamma \phi\biggl(\frac{x_n}{\sqrt{|t|}}\biggr),\quad (x,t)\in \R^n_+\times (-\infty,0),
\ee
where $\gamma=\frac{p-2}{2(p-1)}$ and the profile $\phi\in C^2([0,\infty))$ satisfies $\phi(0)=0$, $\phi'>0$ on $[0,\infty)$ and 
 $\displaystyle\lim_{y\to\infty} \frac{\phi(y)}{y^{\beta+1}}=L>0$.
 \end{thm}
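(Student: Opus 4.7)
The plan is to seek $u$ in the backward self-similar form \eqref{selfsimilsol} and construct the profile $\phi$ via an ODE shooting argument. Substituting the ansatz into \eqref{eqE0} together with $u|_{\partial\R^n_+} = 0$, the required scaling $\gamma - 1 = p(\gamma - 1/2)$ forces $\gamma = (p-2)/(2(p-1))$ as stated, and reduces the PDE to the profile ODE
\[
\phi''(y) - \tfrac{y}{2}\phi'(y) + \gamma\,\phi(y) + (\phi'(y))^p = 0, \qquad y>0, \qquad \phi(0)=0.
\]
Plugging the ansatz $\phi(y)\sim Ly^{\beta+1}$ at infinity and using the key identity $p\beta = \beta+1$, the three terms $(y/2)\phi'$, $\gamma\phi$ and $(\phi')^p$ are of common order $y^{\beta+1}$ (while $\phi''$ is of lower order $y^{\beta-1}$); matching the $y^{\beta+1}$ coefficients uniquely determines $L = \beta^\beta/(\beta+1)^{\beta+1} > 0$, which is the target constant in the theorem.

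I would construct $\phi$ by shooting on $a = \phi'(0) > 0$. For each $a>0$ the IVP has a unique maximal $C^2$ solution $\phi_a$ (with $\phi_a''(0) = -a^p$), depending continuously on $a$. Using this, introduce the two open sets
\[
S_- = \{a > 0 : \phi_a'(y_0) = 0 \text{ at some } y_0 < \infty \text{ while } \phi_a < Ly^{\beta+1} \text{ on } [0,y_0]\},
\]
\[
S_+ = \{a > 0 : \phi_a(y_1) > 2Ly_1^{\beta+1} \text{ for some } y_1 > 0\},
\]
which are disjoint by the choice of $L$. A linear analysis near $\phi\equiv 0$ (whose linearization $\phi'' - (y/2)\phi' + \gamma\phi = 0$ is a rescaled Hermite-type equation) should show that $S_-$ contains all sufficiently small $a$; while for $a$ large the $(\phi')^p$ term produces rapid growth and $a\in S_+$. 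Connectedness of $(0,\infty)$ then yields $a^\ast \in (0,\infty) \setminus (S_-\cup S_+)$, for which $\phi_{a^\ast}' > 0$ on $[0,\infty)$ and $0 \leq \phi_{a^\ast} \leq 2Ly^{\beta+1}$ globally. As a sanity check, in the borderline case $p=2$ (so $\gamma = 0$) the substitution $w = 1/\phi'$ converts the profile equation to the linear first-order ODE $w' + (y/2)w = 1$, explicitly solvable for every $a > 0$ and yielding $\phi(y) \sim y^2/4$ with $L = 1/4$, matching the general formula.

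The hard part will be the sharp asymptotic $\phi_{a^\ast}(y)/y^{\beta+1} \to L$. Because of the resonance identity $(\beta-1)/2 + \gamma = 0$ (direct verification from the definitions), the natural correction to $Ly^{\beta+1}$ is logarithmic, of the form $c\,y^{\beta-1}\log y$ rather than $c\,y^{\beta-1}$, so barriers built from pure powers are too crude. I would complete the matching by constructing ordered super- and subsolutions of the form $Ly^{\beta+1} \pm My^{\beta-1}\log y$ on $[y_0,\infty)$ for $y_0, M$ large and applying a comparison argument based on the linearized operator at $Ly^{\beta+1}$, which inherits an extra drift $+ y\phi_1'$ from the differentiation of $(\phi')^p$ together with the simplifying identity $pL^{p-1}(\beta+1)^{p-1} = 1$. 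Once $\phi \in C^2([0,\infty))$ with $\phi(0) = 0$, $\phi' > 0$ on $[0,\infty)$ and $\phi(y)/y^{\beta+1} \to L > 0$ is established, $u$ defined by \eqref{selfsimilsol} lies in $C^{2,1}(Q) \cap C(\overline Q)$ with $Q = \R^n_+ \times (-\infty,0)$, satisfies the Dirichlet condition (since $\phi(0) = 0$), is positive on $Q$ (since $\phi > 0$ on $(0,\infty)$), and is nonstationary (since $\gamma \neq 0$ for $p \neq 2$, while for $p = 2$ the argument $x_n/\sqrt{|t|}$ itself depends nontrivially on $t$).
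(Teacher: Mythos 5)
Your derivation of the profile ODE $\phi''=\tfrac12 y\phi'-\gamma\phi-|\phi'|^p$ (with $\phi(0)=0$), of the exponent $\gamma=(p-2)/(2(p-1))$, and of the limiting constant $L=\beta^\beta/(\beta+1)^{\beta+1}=p^{-\beta}/(\beta+1)$ via the identity $p\beta=\beta+1$ are all correct and agree with the paper. However, the shooting mechanism you propose is misdirected. You write that a linear analysis ``should show that $S_-$ contains all sufficiently small $a$,'' but the paper proves the opposite: for \emph{every} $\alpha\in(0,\alpha_0)$ with an explicit $\alpha_0=\alpha_0(p)>0$, the solution of the IVP with $\phi'(0)=\alpha$ is global, has $\phi,\phi'>0$ on $(0,\infty)$, and has $\phi''$ changing sign exactly once (Proposition~\ref{existence}). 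Hence no small $a$ lies in your $S_-$, and there is no dichotomy to shoot across: any small enough $\alpha$ already produces the desired profile, making the shooting argument unnecessary. Moreover, even granted some replacement sets, the inference ``$a^*\notin S_-\cup S_+ \Rightarrow \phi_{a^*}'>0$ on $[0,\infty)$'' has a gap as stated: $\phi_{a^*}'$ could vanish at some $y_0$ at which $L y_0^{\beta+1}\le \phi_{a^*}(y_0)\le 2L y_0^{\beta+1}$, and neither $S_-$ nor $S_+$ excludes this. Your disjointness claim for $S_-, S_+$ is also unsubstantiated, particularly for $1<p<2$ where $\gamma<0$ and a zero of $\phi'$ need not be terminal.

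For the asymptotics, a two-sided bound $0\le \phi\le 2Ly^{\beta+1}$ does not by itself yield the sharp limit $\phi/y^{\beta+1}\to L$, and the barrier construction with logarithmic correctors that you sketch would require substantial work to make rigorous. The paper obtains the limit by elementary ODE estimates: it first shows $c_1 y^\beta\le \phi'\le c_2 y^\beta$ and $0\le\phi''\le C y^{\beta-1}$ (Lemmas~\ref{lem2}--\ref{lem4}), and then studies $\psi=\phi/y^{\beta+1}$ directly, proving that along any sequence where $\psi'(y_i)=o(1/y_i)$ one has $\psi(y_i)\to L$, and that $\lim\psi$ exists (Lemma~\ref{lem3}); no super/subsolutions or fine expansion beyond leading order are needed.
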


   Nevertheless, we can show that the classification
 in Theorem~\ref{thmentire} remains true for ancient solutions 
 provided a sublinear growth restriction as $x_n\to \infty$ is made on the solution.
 
 \goodbreak
 
  \begin{thm} \label{thmancient}
 Let $p>1$ and let $u$ be an ancient solution of \eqref{eqE1}.
 Assume that $u = o(x_n)$ uniformly for $t$ bounded away from $0$, namely, for each $\eps>0$:
\be{hypthmancient}
\sup_{\Gamma_R\times(-\infty,-\eps]}|u|=o(R),\quad \hbox{as $R\to\infty$}.
\ee
Then $u$ is stationary and depends only on the variable $x_n$. 
 \end{thm}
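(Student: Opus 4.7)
The strategy combines the Bernstein-type gradient estimate of the paper with a translation-compactness procedure and a linearized Liouville-type argument, to first show one-dimensionality and then stationarity. By the positivity theorem for ancient solutions of \eqref{eqE1} (established earlier in the paper), we may assume $u\ge 0$. As a first step, I would apply the Bernstein gradient bound on a parabolic cylinder $Q_r(x_0,t_0)\subset Q$ with $r\sim(x_0)_n/2$ and $t_0\le -\eps-r^2$: the oscillation of $u$ there is bounded by $\sup_{\Gamma_{2(x_0)_n}\times(-\infty,-\eps]}|u|=o((x_0)_n)$ by \eqref{hypthmancient}, giving $|\nabla u(x_0,t_0)|^{p-1}\le C\,o((x_0)_n)/(x_0)_n = o(1)$ as $(x_0)_n\to\infty$, uniformly in $t_0\le-\eps$. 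In particular, $|\nabla u|$ is bounded on every set $\{x_n\ge\delta\}\times(-\infty,-\eps]$ and tends to $0$ at spatial infinity.

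Next I would establish tangential independence by a translation-compactness argument. Fix a tangential direction $e'\in\R^{n-1}\times\{0\}$, and for $h_k\to\pm\infty$ consider $u_k(x,t):=u(x+h_k e',t)$. These are ancient solutions of \eqref{eqE1} satisfying the same gradient bounds, so parabolic Schauder regularity yields precompactness in $C^{2,1}_{\mathrm{loc}}$. Extracting subsequential $\liminf$ and $\limsup$, one obtains $e'$-translation-invariant sub- and supersolutions that sandwich $u$. Using comparison with tilted versions of the explicit stationary profiles from \eqref{ODEsol} (or their $p\le 2$ analogues) to control the growth at infinity, conclude that these sub- and supersolutions coincide with $u$, so that $u$ is independent of $e'$. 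Ranging over all tangential directions gives $u=u(x_n,t)$.

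For stationarity of the reduced function, consider $w:=u_t$, which solves the linear parabolic equation $w_t-w_{x_n x_n}=p|u_{x_n}|^{p-2}u_{x_n}w_{x_n}$ on $(0,\infty)\times(-\infty,0)$ with $w(0,t)=0$. The Step 1 gradient bound, combined with parabolic Schauder estimates on cylinders of radius $\sim (x_0)_n/2$, forces $|w(x_0,t_0)|\to 0$ as $(x_0)_n\to\infty$, so $w$ vanishes at both the boundary $x_n=0$ and at spatial infinity. A Liouville-type argument for bounded ancient solutions of linear parabolic equations on the half-line with zero Dirichlet data -- handling the weakly singular drift $|u_{x_n}|^{p-2}u_{x_n}$ near $x_n=0$ via barriers and boundary Harnack -- then forces $w\equiv 0$. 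Thus $u$ is stationary, and the classification of stationary solutions in a half-space (\cite{FPS2020} for $p>2$; \cite{PV,PS25} for $p\in(1,2]$) identifies $u$ with one of the explicit one-dimensional profiles.

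The hard part is the translation-compactness step: the unbounded geometry and the nonlinearity make it delicate to identify the tangentially invariant limit uniquely with $u$, requiring careful comparison with auxiliary tilted stationary barriers whose growth is compatible with \eqref{hypthmancient}. A secondary technical difficulty is the weakly singular drift in the linearized Liouville step near the boundary, where $|u_{x_n}|$ may behave like $x_n^{-\beta}$; the Liouville conclusion there should be reached by combining the parabolic maximum principle with the Bernstein control up to $x_n=0$.
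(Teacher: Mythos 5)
Your Step 1 (Bernstein yields uniform decay of $|\nabla u|$ as $x_n\to\infty$) matches the paper, but the heart of the proof -- the translation argument -- is structured quite differently in the paper, and as stated your version has gaps that look hard to repair.

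The decisive difference is that the paper never sends the shift parameter to infinity. For a \emph{fixed} tangential shift $h\in\R^{n-1}\setminus\{0\}$ (or a fixed time shift $h<0$), it sets $v(x,t)=u(X_h)-u(x,t)$ and shows $v\equiv 0$ directly. The decay estimates (not only $|\nabla u|\to 0$ but also, via parabolic regularity, $|u_t|\to 0$ as $x_n\to\infty$, together with the boundary bound $|u|\le Cx_n^\kappa$ near $x_n=0$) make $|v|\le\sigma/2$ outside a band $\delta\le x_n\le A$, where $\sigma=\sup v$. A maximizing sequence is recentered by tangential and temporal translations, compactness produces a limit solution $U$ for which $V:=U(X_h)-U$ attains its supremum $\sigma$ at an \emph{interior} point of the band; since $V$ solves a linear parabolic equation with bounded drift there, the strong maximum principle gives $V\equiv\sigma$, contradicting the band bound. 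Crucially, stationarity and one-dimensionality are proved simultaneously by taking $h$ tangential or temporal in the same argument; no separate linearized Liouville theorem is needed.

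By contrast, your Step 2 sends $h_k\to\pm\infty$ and claims the $\liminf$/$\limsup$ produce $e'$-invariant sub/supersolutions that ``sandwich'' $u$ and must coincide with $u$. None of these three assertions is justified. The $\liminf$/$\limsup$ of solutions along a noncompact translation group need not be sub/supersolutions in the required sense, there is no monotonicity to make them sandwich $u$, and even if they did, there is no mechanism identifying the invariant limits with the original $u$ (that is precisely what one is trying to prove). The proposed fix by ``comparison with tilted versions of the explicit stationary profiles \eqref{ODEsol}'' is undeveloped: the profiles \eqref{ODEsol} depend only on $x_n$ and there is no obvious tilt that preserves sub/supersolution status while providing the needed tangential control. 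Your Step 3 (linearized Liouville for $w=u_t$) is also not automatic: an ancient bounded solution of a linear parabolic equation on $(0,\infty)$ vanishing at $x_n=0$ and $x_n=\infty$ need not vanish identically; the paper's compactness-plus-strong-maximum-principle trick is what closes this, and you do not state it. The singular drift $\sim x_n^{-\beta}$ you flag is, in the paper's scheme, a non-issue because the contradiction is located in a band $\delta\le x_n\le A$ where the coefficient is bounded; your barrier/boundary-Harnack idea is an unnecessary detour once the argument is localized correctly. In short: replace the ``$h\to\infty$'' sliding by the paper's ``compare $u$ to a nearby translate and kill the difference by localization, compactness and the strong maximum principle'', and fold time shifts into the same step rather than invoking a separate linear Liouville theorem.
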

 
The sublinear growth restriction \eqref{hypthmancient} is in some sense optimal (see Remark~\ref{rem3}(i)).
The above results reveal that the situation for half-space and whole space is notably different: 
For the whole space, the sublinear growth assumption 
\eqref{growthassumpt0} (or \eqref{growthassumpt}) is necessary and sufficient for 
stationarity of both ancient and entire solutions
(cf.~Theorems~\ref{thmLiouvRn0}-\ref{thmLiouvRn} and example \eqref{TW}).
On the contrary, in the half-space case, a growth assumption is necessary for 
stationarity of ancient solutions but not of entire solutions
(cf.~Theorems~\ref{thmentire}-\ref{thmancient}).
Heuristically, this can be interpreted as an effect of the boundary conditions acting as an additional constraint 
or ``barrier'',
which prevents nonstationary ancient solutions to keep existing globally in the future.

Our next result shows positivity 
and  optimal, universal a priori estimates for ancient solutions.

 \begin{thm} \label{thmancient3}
Let $p\ge 2$ and let $u$ be an ancient solution of \eqref{eqE1}.

\begin{itemize}

\item[(i)] Then $u\ge 0$.
 \smallskip 
\item[(ii)] If $p>2$ then we have
 \be{controludsqa}
 u(x,t)\le C(n,p)\Big(x_n^{1-\beta}+x_n^{1+\beta}|t|^{-\beta}\Big)\quad \hbox{in } Q,
 \ee
  and 
  \be{controludsq1a}
 |\nabla u(x,t)|\le C(n,p)\Big(x_n^{-\beta}+x_n^{\beta}|t|^{-\beta}\Big)\quad \hbox{in } Q.
 \ee
 \end{itemize}
 \end{thm}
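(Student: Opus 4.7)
My plan is to first establish the universal estimates of part (ii) for $p>2$ by a direct application of the Bernstein bound, and then use them together with a Liouville-type argument to deduce the positivity assertion of part (i); the case $p=2$ for positivity will need to be handled separately via the Hopf--Cole linearization.

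For the gradient bound \eqref{controludsq1a}, I would apply the local Bernstein estimate of Theorem~\ref{propBern} on parabolic cylinders $B_r(x)\times(t-r^2,t)\subset Q$, which are admissible provided $r<x_n$ (to stay inside the half-space) and $r^2<|t|$ (to remain in the time interval). Optimizing the choice $r\asymp\min(x_n,\sqrt{|t|})$ should yield a bound of the form $|\nabla u(x,t)|\le C\bigl[\min(x_n,\sqrt{|t|})\bigr]^{-\beta}$, which is equivalent (up to constants) to \eqref{controludsq1a}: the first term $x_n^{-\beta}$ dominates in the elliptic regime $x_n^2\lesssim|t|$ and the second term $x_n^{\beta}|t|^{-\beta}$ in the parabolic regime $x_n^2\gtrsim|t|$. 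The function estimate \eqref{controludsqa} would then follow by integration from the boundary: since $u(x',0,t)=0$,
\[
u(x,t)=\int_0^{x_n}\partial_{x_n}u(x',s,t)\,ds\le\int_0^{x_n}|\nabla u(x',s,t)|\,ds,
\]
and inserting \eqref{controludsq1a} and using that both $s^{-\beta}$ and $s^{\beta}$ are integrable on $(0,x_n)$ (since $0<\beta<1$ for $p>2$) reproduces exactly \eqref{controludsqa}.

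For part (i), I would set $w:=u^-\ge 0$. In the open set $\{u<0\}$ one computes $w_t-\Delta w=-u_t+\Delta u=-|\nabla u|^p=-|\nabla w|^p\le 0$, so $w$ is a nonnegative subsolution of the heat equation on $Q$ that vanishes on the lateral boundary $\partial\R^n_+\times(-\infty,0)$. For $p>2$, the function estimate established just above gives $w(x,t)\le C(\eps)\,x_n^{1-\beta}=o(x_n)$ uniformly on each time slab $(-\infty,-\eps]$. I would then invoke a Phragm\'en--Lindel\"of/Liouville principle for nonnegative ancient subcaloric functions in the half-space with zero lateral data and strictly sublinear growth in $x_n$ (measured against the extremal heat solution $x_n$) to force $w\equiv 0$, that is, $u\ge 0$. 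For $p=2$, the integration of the gradient bound only produces a logarithmic estimate for $w$, so instead I would apply the Hopf--Cole transformation $v=e^u$, which turns the PDE into the linear heat equation with $v>0$ and $v=1$ on $\partial\R^n_+$; positivity of $u$ then reduces to $v\ge 1$, obtainable from a Widder-type theorem for positive ancient solutions of the heat equation.

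The main obstacle I anticipate is the Phragm\'en--Lindel\"of/Liouville step for $p>2$: one must show that an ancient nonnegative subcaloric function vanishing on $\partial\R^n_+$ with growth exactly $O(x_n^{1-\beta})$, strictly slower than the critical linear profile $x_n$, must vanish identically; this lies at the borderline of the classical Widder-type classification and will likely require carefully chosen supersolution barriers adapted to the decay rate $1-\beta\in(0,1)$. The endpoint $p=2$ is delicate for a complementary reason (the gradient estimate becomes critical in the integration step), but the Hopf--Cole linearization rescues it.
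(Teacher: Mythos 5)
Your proposal has a genuine gap in part (ii), which then also undermines part (i) since the latter relies on the estimate $u^-\le Cx_n^{1-\beta}$.

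\textbf{The circularity in part (ii).} The Bernstein estimate in Theorem~\ref{propBern} is
\[
|\nabla u|\le C(n,p)\left\{\frac{M-u}{R}+\Bigl(\frac{M-u}{R^2\wedge t}\Bigr)^{1/p}\right\},
\qquad M:=\sup_{Q_{R,T}}u,
\]
so the right-hand side contains $M-u$, not merely the scale $R$. Applying this on a cylinder of radius $r\asymp\min(x_n,\sqrt{|t|})$ does not produce a bound $|\nabla u|\lesssim r^{-\beta}$ unless one already knows that the oscillation $M-u$ on that cylinder is $O(r^{1-\beta})$ — but that is precisely the function estimate \eqref{controludsqa} you are trying to derive. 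Your plan thus has the logical flow (function bound) $\Leftarrow$ (gradient bound via Bernstein) $\Leftarrow$ (function bound), which is circular. The integration step from \eqref{controludsq1a} to \eqref{controludsqa} is fine as you wrote it, but you cannot start the chain. The paper breaks this circularity by proving the function estimate \emph{first}, without any reference to the gradient: it establishes local weighted integral bounds on $u_+$ and $u_-$ (Proposition~\ref{1L3}, via the multiplier $\varphi=\psi(x')\chi(x_n)$, a weighted Poincar\'e inequality and blow-up/decay differential inequalities), converts them into pointwise bounds using the Li--Yau Harnack inequality \eqref{controlul} of Theorem~\ref{thmLY}(ii), and then scales. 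Only \emph{then} does it invoke Theorem~\ref{propBern} (Step~4 of the proof of Lemma~\ref{1L3b}), now legitimately, to derive \eqref{controludsq1a} from \eqref{controludsqa} and the lower bound \eqref{vap3b}.

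\textbf{On part (i).} Modulo the previous point, your idea for $p>2$ is actually viable and is a genuinely different (and arguably more elementary) route than the paper's. Once one has $u\ge -Cx_n^{1-\beta}$, the function $w=u^-$ is indeed a nonnegative weak subcaloric function (this needs the parabolic Kato inequality rather than a pointwise computation in $\{u<0\}$, but that is standard) vanishing on $\partial\R^n_+$ with $w=o(x_n)$. The Phragm\'en--Lindel\"of/Widder step you worry about can be carried out exactly by the annulus comparison the paper uses in the proof of Proposition~\ref{thmancient3subquadr}: compare $u$ from below with $KU_R(x-b,\cdot)$ for the heat solution $U_R$ from \eqref{defUcomp} on annuli of radius $R$ tangent to $\partial\R^n_+$, use heat-kernel estimates to get the linear-in-$x_n$ bound \eqref{boundUcomp}, and let $R\to\infty$ with $K=\inf_{\Gamma_R}u\ge -CR^{1-\beta}=o(R)$; the one-sided maximum principle in the unbounded strip only needs $u$ bounded below, which you have. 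The paper instead compares with an explicit forward self-similar barrier $W(x_n,t)=(t+1)^\gamma\phi(x_n/\sqrt{t+1})$ whose profile $\phi$ is constructed by a shooting argument in Section~\ref{SecFwd}; this is more work, and is done that way partly because the forward profile $\phi$ is of independent interest (used also in the optimality example of Proposition~\ref{prop-optim}). Your $p=2$ treatment via Hopf--Cole coincides with the paper's.

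In short: the positivity argument is recoverable, but the universal upper bound \eqref{controludsqa} cannot be obtained from Bernstein alone; you need an independent mechanism to control $\sup u$, which is where the paper's integral plus Li--Yau machinery is essential.
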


\smallskip

The assumption $p>2$ in Theorem~\ref{thmancient3}(ii) is sharp,
and estimates \eqref{controludsqa}, \eqref{controludsq1a} are optimal; see Remark~\ref{rem3}(ii).
As for Theorem~\ref{thmancient3}(i), we do not know whether it is still true for $1<p<2$.
However a partial posivity result in that range of $p$, under an additional growth restriction on $u$, is given in 
Proposition~\ref{thmancient3subquadr} below.
The estimates in Theorem~\ref{thmancient3}(ii) will be derived first, and will play an important role in the proof of Theorems~\ref{thmentire} and \ref{thmancient}.

 We close this subsection with some additional remarks.

\begin{remark} \label{rem3}
 \rm
{\bf (Optimality)}
(i) The sublinear growth restriction \eqref{hypthmancient}
 in Theorem~\ref{thmancient} is optimal for $p=2$, in view of the special 
 nonstationary solutions
  \be{entirep2a}
   u(x,t)=\log\bigl[1+e^t\sinh(x_n)\bigr],
   \ee
 which precisely satisfy 
 $\sup_{\,\Gamma_R\times(-\infty,-\eps]} u(x,t)\sim R$ as $R\to\infty$.
  For $p\ne 2$, this restriction is not technical either,
since the nonstationary ancient solutions in Theorem~\ref{thmancient2} grow
 in space like $x_n^{1+\beta}$ as $x_n\to\infty$.
 Although we do not know presently what is the critical growth restriction for fixed $p\ne 2$, we note that 
 it is also nearly optimal for large $p$ since $\beta\to 0$ as $p\to\infty$.
 
     \vskip 1pt

(ii) The optimality of estimates \eqref{controludsqa}, \eqref{controludsq1a} 
in Theorem~\ref{thmancient3}(ii) is seen from the special solutions in \eqref{ODEsol} and in \eqref{selfsimilsol},
 since the latter satisfy the property
\be{selfsimilsol2}
u(x,t)\sim L|t|^{-\beta}x_n^{1+\beta}\ \hbox{as } t\to 0^-,\quad\hbox{for each $x\in\R^n_+$.}
\ee
Moreover, the restriction $p>2$ in Theorem~\ref{thmancient3}(ii) is sharp.
 Actually, estimate \eqref{controludsqa} 
 cannot be true for $p\in (1,2)$ since it would imply that any entire solution satisfies
 $u(x,t)\le C(n,p)x_n^{1-\beta}$
   (applying \eqref{controludsqa} to the solution $v(x,t)=u(x,t+t_0+\tau)$ for each $t_0\in\R$, $\tau>0$,
 taking $t=-\tau$ and letting $\tau\to\infty$). 
 But this estimate is violated by the family of stationary solutions
\be{statsola}
u_a(x_n)=c(p)(a^{1-\beta}-(a+x_n)^{1-\beta}), \quad a>0,
\ee
where $\beta>1$ (cp.~with \eqref{ODEsol}),
  since $\sup_{a>0} u_a(x_n)=\infty$ for all $x_n>0$.
    For $p=2$, 
 \eqref{controludsqa} (resp., \eqref{controludsq1a}) 
would similarly imply that any entire solution satisfies
 $u(x,t)\le C(n,p)$ (resp., $|\nabla u(x, t)|\le C(n,p)x_n^{-1}$),
 but this is violated by the entire solution in \eqref{entirep2a}
  which satisfies $u_{x_n}(x_n,0)\sim 1$ as $x_n\to\infty$.
  For $p=2$ and $n\ge 2$, \eqref{controludsqa} fails even more dramatically since
  the entire solution in \eqref{entirep2} hereafter satisfies
  $\sup_{\,\Gamma_R}u(\cdot,t)=\infty$ for every $t\in\R$ and $R>0$.

     \vskip 1pt
 
(iii) Explicit entire solutions of problem \eqref{eqE1} for $p=2$, which are neither stationary nor one-dimensional
 (for $n\ge 2$),
are given by:
\be{entirep2}
 u(x',x_n,t)=\log\Bigl[1+\exp\bigl(a\cdot x'+(|a|^2+k^2)t\bigr)\sinh(kx_n)\Bigr],
 \ee
 for any $a\in\R^{n-1}\setminus\{0\}$ and $k>0$.
 This problem also has one-dimensional stationary solutions, which are of the form $u=\log(1+kx_n)$.
 Note that these solutions, as well as \eqref{entirep2a}, are obtained by the transformation $u=\log(1+v)$
 where $v$ is a positive solution of the heat equation.
We conjecture that \eqref{eqE1} admits nonstationary entire solutions also for $1<p<2$,
 but this remains an open question.
 \end{remark}

\begin{remark} \rm
 {\bf (Blow-up forms)}
The solution in Theorem~\ref{thmancient2} exhibits global amplitude blow-up on the half-space (cf.~\eqref{selfsimilsol2}).
Its behavior is thus different from the boundary gradient blow-up phenomenon which occurs in the case of bounded domains 
(cf.~the references after \eqref{defgbu}).
Theorem~\ref{thmancient2}  is related to the results in \cite[Section~4.2]{GGK}, 
where backward self-similar blow-up solutions of equation \eqref{eqE0}  on the whole real line  $\Omega=\R$ were constructed.
Additional information on $u$ is given in Proposition \ref{existence} below. 
 \end{remark}

\subsection{Local estimates of Li-Yau and Bernstein type}

 Our last set of main results are the following,
essentially optimal, local estimates for solutions of \eqref{eqE0}
 (see  Proposition~\ref{prop-optim} below for the optimality).
 Beside their intrinsic interest, they are
 important ingredients in the proofs of our Liouville type results.

\vskip 1pt 

 The first one, valid for  
 $p\ge 2$, is a Li-Yau type estimate, which is reminiscent of the celebrated estimate in 
 \cite{LY75} for the heat equation
(which corresponds, here in Euclidean setting, to the case $p=2$ via the usual logarithmic transformation).
 This estimate allows to compare the values of the solution at arbitrary points in space and at different times,
in a way similar to a parabolic Harnack inequality, but in additive instead of multiplicative form.
Such estimates seem completely new for the diffusive Hamilton-Jacobi equation with $p\ne 2$.
  Here and in the rest of the paper, for given $R,T>0$, we denote
$$ B_R:=\{x\in\R^n;\  |x|<R\},\quad 
Q_{R,T}:=B_R \times (0, T),\quad Q'_{R,T}:=B_{R/2} \times (0, T).$$

 \begin{thm}  \label{thmLY}
 Let  $p>2$,  $R, T>0$ and let $u\in C^{2,1}(Q_{R,T})$ be a classical solution of \eqref{eqE0}
 in~$Q_{R,T}$. 
 
\begin{itemize}
\item[(i)]  For each $a \in[0, 1)$, there exists $C = C(n,p,a) > 0$ such that
\be{controlul0}
a|\nabla u|^p-u_t\le  C\bigl(R^{-\beta-1}+R^{1-\beta}t^{-1}\bigr)
\quad\hbox{ in $Q'_{R,T}$.}
\ee

\item[(ii)] There exists $C = C(n,p) > 0$ such that, for all $0<t<s<T$ and $x,y\in B_{R/2}$,
\be{controlul}
 u(x,t)\le u(y,s)+C\Bigl(\frac{|y-x|^p}{s-t}\Bigr)^\beta+C\bigl(R^{-\beta-1}+R^{1-\beta}t^{-1}\bigr)(s-t),
 \ee
 
\end{itemize}
 \end{thm}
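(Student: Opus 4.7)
The plan is to prove (i) by a Bernstein--Li-Yau type maximum-principle argument involving a spatial cut-off, and then deduce (ii) from (i) by integrating along a straight space-time segment. Since $a \mapsto a|\nabla u|^p - u_t$ is nondecreasing, it suffices to prove (i) for $a \in (0,1)$.

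Set $P = a|\nabla u|^p - u_t$ and $v = |\nabla u|^2$. Using $\nabla u \cdot \nabla u_t = \tfrac12 \partial_t v$, $\Delta v = 2|D^2u|^2 + 2\nabla u \cdot \nabla \Delta u$, equation \eqref{eqE0} (which yields $\Delta u = -(1-a)v^{p/2} - P$) and its time derivative, and eliminating $\nabla u_t$ via $\nabla u_t = a(p/2)v^{(p-2)/2}\nabla v - \nabla P$, a direct computation produces
\begin{equation*}
\mathcal{L}P := \bigl(\partial_t - \Delta - p\,v^{(p-2)/2}\nabla u \cdot \nabla\bigr)P = -\tfrac{p(p-2)}{4}\,a\,v^{(p-4)/2}|\nabla v|^2 - pa\,v^{(p-2)/2}|D^2u|^2.
\end{equation*}
For $p > 2$ both terms are nonpositive. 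Combining $|D^2u|^2 \ge (\Delta u)^2/n$ with the elementary bound $((1-a)v^{p/2} + P)^2 \ge P^2 + (1-a)^2 v^p$ (valid whenever $P \ge 0$), I obtain the key absorbing inequality
\begin{equation*}
\mathcal{L}P \le -\tfrac{pa}{n}\,v^{(p-2)/2}P^2 - \tfrac{pa(1-a)^2}{n}\,v^{(3p-2)/2} \qquad \text{on } \{P \ge 0\}.
\end{equation*}

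Now choose a smooth cut-off $\eta(x) = \zeta(|x|/R)$ supported in $B_R$ with $\eta \equiv 1$ on $B_{R/2}$ and $|\nabla \eta|^2 + |\Delta \eta| \le C\eta^{1-1/k}R^{-2}$ for a sufficiently large integer $k$, and consider $F(x,t) := t\,\eta(x)^{2k}[P(x,t)]_+$ on $\overline{B_R} \times [0,T]$. If $F \not\equiv 0$, its positive maximum is attained at some interior point $(x_0,t_0)$, where the optimality conditions $\nabla F = 0$, $\Delta F \le 0$, $\partial_t F \ge 0$ yield $\nabla P = -(2kP/\eta)\nabla \eta$. Expanding $(\partial_t - \Delta)F \ge 0$ and inserting both this identity and the bound on $\mathcal{L}P$, one obtains a pointwise inequality at $(x_0,t_0)$ in which the drift term $p v^{(p-2)/2} \nabla u \cdot \nabla P$ (of magnitude $\lesssim v^{(p-1)/2}P|\nabla\eta|/\eta$) and the cut-off cross terms $\eta^{2k-2}|\nabla\eta|^2 P$, $\eta^{2k-1}|\Delta\eta|P$ can all be absorbed into the two good negative quantities $v^{(p-2)/2}P^2$ and $v^{(3p-2)/2}$ by repeated application of Young's inequality. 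This leaves $\eta^{2k}\,tP \le C(n,p,a)(tR^{-\beta-1} + R^{1-\beta})$ at $(x_0,t_0)$, from which \eqref{controlul0} follows upon division by $t$, since $\eta \equiv 1$ on $B_{R/2}$.

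For (ii), fix $x,y \in B_{R/2}$ and $0 < t < s < T$, and take the affine path $\gamma(\tau) = x + \tfrac{\tau - t}{s-t}(y-x)$, $\tau \in [t,s]$, which stays in $B_{R/2}$ by convexity, with $\dot\gamma \equiv (y-x)/(s-t)$. The fundamental theorem of calculus gives
\begin{equation*}
u(y,s) - u(x,t) = \int_t^s \bigl[u_t(\gamma(\tau),\tau) + \nabla u(\gamma(\tau),\tau) \cdot \dot\gamma\bigr]\,d\tau.
\end{equation*}
Applying (i) with a fixed $a \in (0,1)$ and using $\tau \ge t$, the integrand is bounded below by $a|\nabla u|^p - |\nabla u|\,|\dot\gamma| - C(R^{-\beta-1} + R^{1-\beta}t^{-1})$, and the minimum over $X \ge 0$ of $aX^p - X|\dot\gamma|$ equals $-c(p)|\dot\gamma|^{p/(p-1)}$. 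Since $1 + \beta = p\beta$ and $|\dot\gamma| = |y-x|/(s-t)$, integration over $[t,s]$ produces exactly $-C(|y-x|^p/(s-t))^\beta - C(R^{-\beta-1} + R^{1-\beta}t^{-1})(s-t)$, whence \eqref{controlul} after rearrangement. The main obstacle lies in the cut-off step: the Young-inequality splittings must be carried out so as to precisely recover the scalings $R^{-\beta-1}$ and $R^{1-\beta}/t$ dictated by the natural scales $|\nabla u| \sim R^{-\beta}$ and $u_t \sim R^{1-\beta}/t$; the constants will necessarily blow up as $a \uparrow 1$, since the coefficient $(1-a)^2$ of the Bernstein term $v^{(3p-2)/2}$ vanishes in that limit, reflecting the usual Li-Yau trade-off.
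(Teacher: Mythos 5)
Your overall skeleton is right: maximum principle on a cut-off auxiliary function for (i), then integration along a space-time path for (ii) (the latter matches the paper's Step~5 essentially verbatim). But there is a genuine structural gap in (i), coming from the choice of auxiliary function.

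You take $P = a|\nabla u|^p - u_t$. Your computation of $\mathcal{L}P$ is correct; writing $v=|\nabla u|^2$, $m=p/2$, it comes from $\mathcal{L}P = a\,\mathcal{L}(v^m)$, and since $\mathcal{L}(v^m) = -p\,v^{m-1}|D^2u|^2 - m(m-1)v^{m-2}|\nabla v|^2$, the two good negative terms you end up with are
$$ -\tfrac{pa}{n}\,v^{(p-2)/2}P^2 \qquad\text{and}\qquad -\tfrac{pa(1-a)^2}{n}\,v^{(3p-2)/2}. $$
Both carry the \emph{degenerate prefactor} $v^{(p-2)/2}$ (since $p>2$), which vanishes wherever $|\nabla u|\to 0$, and this is fatal for the cut-off/maximum-principle step. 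At an interior maximum of $F = t\,\eta^{2k}P_+$ you need to absorb, among others, the term $\eta^{2k}P$ coming from $\partial_t(t)$. But both absorbing monomials $v^{(p-2)/2}P^2$ and $v^{(3p-2)/2}$ have strictly positive $v$-exponent, so any Young-type combination $\bigl(v^{(p-2)/2}P^2\bigr)^{\theta_1}\bigl(v^{(3p-2)/2}\bigr)^{\theta_2}$ with $\theta_1,\theta_2\ge 0$ again carries a strictly positive power of $v$ (unless $\theta_1=\theta_2=0$). In particular one can never produce the monomial $P$ (whose $v$-exponent is $0$) from them by ``repeated Young'', regardless of how the cut-off is tuned. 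Concretely, a maximum of $F$ at a point where $|\nabla u|$ is tiny but $-u_t$ (equivalently $-\Delta u$) is huge is simply not seen by your inequality: the left side is nearly zero while the right side blows up, and no constant emerges. The same obstruction blocks the absorption of the $t\,\eta^{2k-1/k}P$ cut-off term.

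The paper avoids this precisely by adding the quadratic gradient term to the functional, taking $\phi = a|\nabla u|^p + |\nabla u|^2 - u_t$. The point is that $\mathcal{L}(v) = -2|D^2u|^2$ has \emph{no} degenerate prefactor, while $\mathcal{L}(a\,v^{m}) \le 0$ for $p>2$, so one gets the clean inequality $\mathcal{L}\phi \le -2|D^2u|^2 \le -\tfrac{2}{n}(v^{m}-u_t)^2$. The extra $v$ in $\phi$ is then used once more, via $v \le \tfrac{1-a}{2}v^{m} + \bar C$, to show $v^{m}-u_t \ge \varepsilon_0(\phi + v^{m})$ on $\{\phi\ge 2\bar C\}$, giving the nondegenerate $\mathcal{L}\phi \le -c(\phi^2 + v^p)$ that the cut-off argument needs. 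You should add $|\nabla u|^2$ to your $P$; with that change the rest of your plan (cut-off of the form $\eta^{2k}$ with $|\nabla\eta|\lesssim \eta^{\gamma}$, comparison with $\psi(t)=C_3/t$ or your weighted max) goes through, and your part~(ii) derivation is correct.
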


     \begin{remark}  \label{remLYp2}
     \rm
In the case $p=2$, \eqref{controlul0} is replaced by:
\be{controlulp2}
a|\nabla u|^2-u_t\le   C(n,a) \big(R^{-2}+ t^{-1}\big),
\ee
 and a similar change for \eqref{controlul}  (cf.~\cite{LY75} and see Step~1 of the proof below).
     \end{remark} 

We stress that, remarkably:
\be{claim-thmLY}
\hbox{For $1<p<2$, estimates \eqref{controlul0} and \eqref{controlul}
fail for every $R>0$ and $a\in[0,1)$,}
 \ee
the heat equation case $p=2$ being thus the endpoint of the validity range;
 see Remark~\ref{remLY}.

Our second  local estimate, this time valid for all $p>1$, is a Bernstein type gradient estimate.
  Let us recall that such estimates go back to the early work \cite{Ber}. The technique
was further developed in, e.g.,~\cite{La58,Ar69,Se69,LY75,Lions85,Bar}
 and \cite{GS1,BVV,BGV}, in pointwise and integral form, respectively.
 For the parabolic diffusive Hamilton-Jacobi equation, Bernstein type estimates have been obtained in
\cite{BL, GilGK, BBL} for the global in space case ($\Omega=\R^n$) and in
\cite{SZ} in the local case.

\begin{thm} \label{propBern}
 Let $p > 1$, $R, T>0$ 
 and let $u\in C^{2,1}(Q_{R,T})$  be a classical solution of \eqref{eqE0} 
 in $Q_{R,T}$ such that
 $M:=\sup_{Q_{R,T}} u<\infty$. We have:
\be{grabound} 
 |\nabla u|\le C(n,p)\left\{\frac{M-u}{R}+\Bigl(\frac{M-u}{R^2\wedge t}\Bigr)^{1/p}\right\}
\ \hbox{ in $Q'_{R,T}$.}
\ee 
 \end{thm}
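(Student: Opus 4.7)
My plan is to use the parabolic Bernstein method with an auxiliary function encoding both a spatial cutoff and the vertical scale $M-u$. First I would set $v := M - u + \delta > 0$, with $\delta > 0$ sent to zero at the end, so that $v_t - \Delta v + |\nabla v|^p = 0$ in $Q_{R,T}$. Letting $w := |\nabla u|^2 = |\nabla v|^2$, the Bochner identity combined with the PDE gives
\[
w_t - \Delta w + 2|D^2v|^2 + p|\nabla v|^{p-2}\nabla v\cdot\nabla w = 0.
\]
For an arbitrary $\tau \in (0,T]$, I fix a space-time cutoff $\eta = \phi^s$ with $s$ large, equal to $1$ at the target point, vanishing on the parabolic boundary of $Q_{R,T}$, and satisfying $|\partial_t\eta|/\eta,\ |\Delta\eta|/\eta \le C/(R^2\wedge\tau)$ and $|\nabla\eta|^p/\eta^{p-1} \le C/R^p$. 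The Bernstein auxiliary function is then
\[
\Phi := \eta^{2m}\,\frac{w}{v^{2a}}\,,
\]
with parameters $a \in (0,1/2)$ and $m > 0$ to be tuned.

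At an interior maximum point $(x_*, t_*)$ of $\Phi$, the first-order condition eliminates $\nabla w$ via $\nabla w = w(2a\,\nabla v/v - 2m\,\nabla\eta/\eta)$, and the second-order condition $(\log\Phi)_t - \Delta\log\Phi \ge 0$, combined with the PDE for $w$ and Cauchy--Schwarz $|\nabla w|^2 \le 4w|D^2v|^2$ (which absorbs the Hessian dissipation $-2|D^2v|^2/w$), yields the pointwise algebraic inequality
\[
2a(p-1)\frac{w^{p/2}}{v} + 2a(1-2a)\frac{w}{v^2} \le C_m\Bigl[\frac{|\partial_t\eta|+|\Delta\eta|}{\eta}+\frac{|\nabla\eta|^2}{\eta^2}\Bigr] + 8ma\frac{\sqrt{w}\,|\nabla\eta|}{v\,\eta} + 2pm\,w^{(p-1)/2}\frac{|\nabla\eta|}{\eta}.
\]
The two cross terms on the right are dispatched by Young's inequality: the first absorbs into $w/v^2$ via $\sqrt{w}|\nabla\eta|/(v\eta) \le \eps\,w/v^2 + C_\eps|\nabla\eta|^2/\eta^2$ (which is the reason for taking $a<1/2$); the second, and crucial, via the Young decomposition with exponents $p/(p-1)$ and $p$:
\[
w^{(p-1)/2}\frac{|\nabla\eta|}{\eta} \le \eps\,\frac{w^{p/2}}{v} + C_\eps\,v^{p-1}\,\frac{|\nabla\eta|^p}{\eta^p}\,,
\]
which absorbs into $w^{p/2}/v$. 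After this bookkeeping, the quantitative properties of $\eta$ yield at the max point $w^{p/2}/v \le C(n,p)\bigl[(R^2\wedge\tau)^{-1} + v^{p-1}/R^p\bigr]$.

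To convert this estimate at the max point into a pointwise bound at an arbitrary target $(x_0,\tau) \in B_{R/2}\times\{\tau\}$, I would rephrase the same Bernstein argument applied to the barrier-type function
\[
\Psi := \eta^{2m}\bigl\{w^{p/2} - K\,v/(R^2\wedge\tau) - K\,v^p/R^p\bigr\},
\]
choosing $K = K(n,p)$ large enough that the preceding Young bookkeeping forces $\Psi \le 0$ at any interior maximum; since $\eta(x_0,\tau)=1$, this gives $|\nabla u(x_0,\tau)|^p \le K\bigl[v(x_0,\tau)/(R^2\wedge\tau) + v^p(x_0,\tau)/R^p\bigr]$, from which the stated bound follows by subadditivity of $t\mapsto t^{1/p}$ and letting $\delta \to 0$. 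The main obstacle is the non-symmetric Young decomposition at the heart of the argument: using the exponents $p/(p-1)$ and $p$ (rather than the symmetric pair $2,2$) is precisely what produces the elliptic-scale residual $v^{p-1}|\nabla\eta|^p/\eta^p \le C\,v^{p-1}/R^p$, and hence the sharp $(M-u)/R$ contribution in the conclusion, as opposed to the weaker $((M-u)/R^2)^{1/p}$ that a naive computation would yield. A careful simultaneous balancing of the parameters $a$, $m$ and the various $\eps$'s is the technical crux.
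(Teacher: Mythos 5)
Your Bernstein-with-cutoff framework and Bochner identity are set up correctly, and the Young inequality with exponents $p/(p-1)$ and $p$ (producing the residual $v^{p-1}|\nabla\eta|^p/\eta^p$) does identify the mechanism behind the elliptic term $(M-u)/R$. The gap is in converting the estimate at the interior maximum $(x_*,t_*)$ of $\Phi=\eta^{2m}w/v^{2a}$ into a bound at the target $(x_0,\tau)$. Taking $a=1/p$ (so that $\Phi=\eta^{2m}\bigl(w^{p/2}/v\bigr)^{2/p}$ and the powers of $v$ cancel in the comparison $\Phi(x_0,\tau)\le\Phi(x_*,t_*)$), your max-point inequality delivers at best
\[
\Bigl(\frac{w^{p/2}}{v}\Bigr)(x_0,\tau)\le C\Bigl[\frac{1}{R^2\wedge\tau}+\frac{v(x_*,t_*)^{p-1}}{R^p}\Bigr],
\]
and $v(x_*,t_*)=M-u(x_*,t_*)+\delta$ is \emph{not} controlled in terms of $v(x_0,\tau)$: $u$ is bounded above but not below, so $M-u$ is unbounded. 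That is precisely the regime where the elliptic term $(M-u)/R$ is supposed to dominate, and what the conclusion requires is a bound local in $M-u$, which the $\Phi$-argument does not yield.

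The proposed barrier $\Psi=\eta^{2m}\{w^{p/2}-Kv/(R^2\wedge\tau)-Kv^p/R^p\}$ has a sign obstruction of its own. Writing $\mathcal{L}=\partial_t-\Delta+p|\nabla v|^{p-2}\nabla v\cdot\nabla$ and using $\mathcal{L}\bigl(f(v)\bigr)=f'(v)\,\mathcal{L}v-f''(v)|\nabla v|^2$, the piece $-Kv^p/R^p$ contributes the \emph{positive} term $+\tfrac{Kp(p-1)}{R^p}v^{p-2}|\nabla v|^2$ to $\mathcal{L}F$, since $s\mapsto s^p$ is convex and enters with a minus sign. On $\{F>0\}$ this is not dominated by the remaining negative terms for unbounded $v$, and the ``same Young bookkeeping'' you invoke was carried out for $\Phi$, not for $\Psi$, so it does not transfer. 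The paper's proof sidesteps both difficulties by putting the whole normalization \emph{inside} a single nonlinear change of variable $v=h(M-u)$, with $h(s)=\int_0^s d\sigma/g(\sigma)$ and the specific choice $g(s)=s+(1+\tau^{-1/p})s^{1/p}$, which is engineered to satisfy $g'\ge 1$ and $g'g^{p-1}-gg''\ge c(p)\bigl(g'+\tau^{-1}\bigr)$ (cf.~\eqref{choiceg0}); this one inequality makes every unfavorable term in the Bernstein calculus for $z=\eta|\nabla v|^2$ negative once $z$ is large, so the maximum principle gives $z\le A(n,p)$ and one simply unwinds $|\nabla u|=|\nabla v|\,g(M-u)\le A^{1/2}g(M-u)$. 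Your two Young exponents and your pairing of $|\nabla\eta|^p/\eta^p$ with $v^{p-1}$ are in effect the shadow of this $g$; to close the argument you would need the normalization by $M-u$ to sit concavely under the gradient (as with $h$), not as an external power $v^{-2a}$ or an additive convex barrier $v^p$.
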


 We close this subsection by some remarks related with Theorems~\ref{thmLY} and~\ref{propBern}
  and with their optimality.

     \begin{remark}  \rm
    (i) {\bf (Scale invariance)}
We note that estimates \eqref{controlul0} and \eqref{grabound} are invariant by the natural rescaling of the equation:
\be{rescalinglambda}
u_\lambda(x,t):=\lambda^{\beta-1}u(\lambda x,\lambda^2t)\qquad (\lambda>0).
\ee
In particular the full estimates can be recovered from the case $R=1$ by this rescaling.
 
  \smallskip
 
(ii) {\bf (Ancient solutions)}
  Let $p\ge 2$. As a consequence of Theorem~\ref{thmLY}(i), it follows that any ancient solution of
 \eqref{eqE0} in $Q:=\R^n\times(-\infty,0)$
 is subharmonic and nondecreasing in time.
Specifically, $\Delta u=u_t-|\nabla u|^p\ge 0$ in $Q$ (letting $R\to\infty$ and then $a\to 1$
in \eqref{controlul0}).
This cannot be improved in general since the entire solution $u=t+x_n$
satisfies $\Delta u=0$.

  \smallskip
 
(iii) {\bf (Previously known results)}
 Theorem~\ref{propBern} improves the local Bernstein estimate in \cite{SZ} 
which, instead of \eqref{grabound}, took the form
\be{graboundSZ} 
|\nabla u|\le C(n,p) \bigl(R^{-\beta}+R^{-1}+t^{-1/p}\bigr)(M+1-u).
\ee
 It is easy to check that \eqref{grabound} is stronger than \eqref{graboundSZ}
(unlike the latter, \eqref{grabound} for instance quantifies how small $|\nabla u|$ is when $u$ is close to $M$
for given $R, t$).
This improvement is essential in the proofs of Theorems~\ref{thmLiouvRn} and \ref{thmancient}.
    \smallskip
    
  In the global case, it was shown  in \cite{BL,GGK} that the Cauchy problem for \eqref{eqE0} 
  with, e.g., bounded continuous initial data $u_0$ admits a unique classical solution
  such that $|u|\le M:=\|u_0\|_\infty$ and
\be{estBLGGK}
|\nabla u|\le C(n,p)\Big(\frac{M-u}{t}\Big)^{1/p}\quad\hbox{in $\R^n\times(0,\infty)$.}
  \ee
  As a consequence of Theorem~\ref{propBern},  
  we in particular recover \eqref{estBLGGK} as an a priori estimate for any solution of \eqref{eqE0} 
  in $\R^n\times(0,\infty)$ such that $u\le M$ (without reference to possible initial data).
  \end{remark}

  The following proposition shows that estimates \eqref{controlul0} and \eqref{grabound} are essentially optimal,
  and that none of the 
 terms on the right hand side of \eqref{grabound} can be dropped in general.  
    
\begin{prop} \label{prop-optim}
(i) Let $p>2$. There exist $a\in(0,1)$, $C>0$, a family $(u_R)_{R>0}$ of  non-positive 
solutions of  \eqref{eqE0}  in $Q_R=B_R\times (0,\infty)$
and times $t_R>0$, such that 
  \be{ex0}
  \big(a|\nabla u_R|^p-\partial_t u_R\big)(0,t_R)=C\bigl(R^{-\beta-1}+R^{1-\beta}t_R^{-1}\bigr),\quad R>0.
  \ee

(ii) Let $p>1$. There exists a family $(u_\eps)_{\eps\in(0,1)}$ of positive solutions of  \eqref{eqE0}  in $Q=B_2\times (0,1]$
and points $(x_\eps,t_\eps)\in \overline B_1\times (0,1]$ such that, setting  $M_\eps=\sup_{Q} u_\eps$, we have
  \be{ex1} \Big(\frac{M_\eps-u_\eps(x_\eps,t_\eps)}{t_\eps}\Big)^{1/p}\ll |\nabla u_\eps(x_\eps,t_\eps)|=M_\eps-u_\eps(x_\eps,t_\eps)\to\infty,
    \quad\hbox{ as } \eps\to 0,
  \ee
  resp., 
   \be{ex2}
M_\eps-u_\eps(x_\eps,t_\eps)\ll |\nabla u_\eps(x_\eps,t_\eps)|\to\infty,
    \quad\hbox{ as } \eps\to 0.
  \ee
\end{prop}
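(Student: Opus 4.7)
The plan is to construct, for each statement, an explicit family of solutions saturating the corresponding bound, with free parameters tuned to produce the prescribed asymptotic ratios.

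For part (i), I would use the one-dimensional non-positive stationary solution on $B_1$: set $v(x) := c_p\bigl[(x_1+b)^{1-\beta} - (1+b)^{1-\beta}\bigr]$ for fixed $b > 1$. Since $1-\beta \in (0,1)$ for $p > 2$, a direct check from the ODE $-v'' = |v'|^p$ (using the value of $c_p$ from \eqref{ODEsol}) shows that $v$ is a stationary classical solution of \eqref{eqE0}, non-positive on $B_1$ and vanishing at $x_1 = 1$. Rescaling via $u_R(x,t) := R^{1-\beta}\, v(x/R)$ produces a non-positive, stationary solution of \eqref{eqE0} on $Q_R$, and using $p\beta = \beta+1$ one obtains
\[
\bigl(a|\nabla u_R|^p - \partial_t u_R\bigr)(0,t_R) = K\, R^{-\beta-1}, \qquad K := a\, c_p^p(1-\beta)^p\, b^{-\beta-1} > 0.
\]
For any $C \in (0, K)$, setting $t_R := \tfrac{C}{K-C}\, R^2$ yields $C\bigl(R^{-\beta-1} + R^{1-\beta}t_R^{-1}\bigr) = KR^{-\beta-1}$, hence the equality \eqref{ex0}.

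Part (ii) requires two separate families. For \eqref{ex1}, I would use the linear entire solution $u_\eps(x,t) := \eps^{-p} t + \eps^{-1} x_1 + K_\eps$ with $K_\eps$ large enough that $u_\eps > 0$ on $\bar Q$ (this is classical since $\partial_t u_\eps = \eps^{-p} = |\nabla u_\eps|^p$ and $\Delta u_\eps = 0$): at $(x_\eps, t_\eps) := (e_1, 1)$ one finds $|\nabla u_\eps| \equiv \eps^{-1}$ and $M_\eps - u_\eps(e_1,1) = \eps^{-1}$, both tending to infinity, while $((M_\eps-u_\eps)/t_\eps)^{1/p} = \eps^{-1/p} \ll \eps^{-1}$ for $p > 1$. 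For \eqref{ex2}, the construction depends on the range of $p$. When $p > 2$, I would use traveling waves $u_\eps(x,t) = \psi(x_1 - ct) + K$, where $\psi$ comes from the ODE $\psi'' + c\psi' + |\psi'|^p = 0$ (with $c$ chosen so that the blow-up endpoint $\underline s$ of $\psi'$ satisfies $|\underline s|/c \in (0,1)$, and $K$ large so $u_\eps > 0$); evaluating at $(x_\eps,t_\eps) := (0, |\underline s|/c - \eps)$ forces $|\nabla u_\eps(x_\eps,t_\eps)| \sim \eps^{-\beta} \to \infty$, while $\psi$ remains bounded at $\underline s$ (since $1-\beta > 0$), so $M_\eps - u_\eps(x_\eps,t_\eps) = O(1)$ and $M_\eps - u_\eps \ll |\nabla u_\eps|$. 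When $1 < p \le 2$, the profile $\psi$ is unbounded at its singular endpoint, so I would instead use the short-time evolution of the Cauchy problem on $\R^n$ with initial data $u_0^\eps(x) := f(x_1/\eps)$ for a smooth bounded strictly positive monotone profile $f$; the parabolic rescaling $\tilde u_\eps(y,s) := u_\eps(\eps y, \eps^2 s)$ converges to an $\eps$-independent limit problem (heat equation for $p < 2$, viscous Hamilton--Jacobi for $p = 2$), and at $t_\eps := \eps^2$ one obtains $|\nabla u_\eps(x_\eps,t_\eps)| = \eps^{-1}|\nabla_y \tilde u_\eps(x_\eps/\eps,1)| \asymp \eps^{-1} \to \infty$ while $M_\eps \le \sup f$ stays bounded.

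The main obstacle is \eqref{ex2}: the stationary 1D profiles $c_p[(x_n+a)^{1-\beta}-a^{1-\beta}]$ have their only gradient singularity at $x_n = -a$, which must lie outside $\bar B_2$ for the solution to be classical there; they therefore cannot produce interior gradient blow-up on $\bar B_1$. One must rely on genuinely non-stationary mechanisms---the singular endpoint of a traveling-wave profile when $p > 2$, or short-time concentration from $\eps$-scale initial data when $p \le 2$---and the delicate point is verifying, in each regime, that $|\nabla u_\eps|/(M_\eps - u_\eps) \to \infty$ while $(x_\eps, t_\eps)$ stays in $\bar B_1 \times (0,1]$.
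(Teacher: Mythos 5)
Your part (i) is correct but takes a different, and in fact simpler, route than the paper: you use a stationary one--dimensional solution $v(x)=c_p\bigl[(x_1+b)^{1-\beta}-(1+b)^{1-\beta}\bigr]$ rescaled by $u_R(x,t)=R^{1-\beta}v(x/R)$, and then tune $t_R$ so that the right-hand side of \eqref{ex0} matches the (time-independent) left-hand side. The equality in \eqref{ex0} is then literally achieved, so the claim as stated is satisfied. Note however that, since $\partial_t u_R\equiv 0$, your family only saturates the elliptic part $R^{-\beta-1}$ of the bound; the choice of $t_R$ is just a balancing of constants, not a genuine saturation of the time term $R^{1-\beta}t^{-1}$. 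The paper instead takes $u_R(x,t)=-v(x+\lambda Re_n,t)$ with $v$ a forward self--similar solution of the absorbing equation and the natural choice $t_R=R^2$, which makes both terms comparable. For part (ii), your construction for \eqref{ex1} is essentially the same as the paper's linear solution $\eps^{-1}x_1+\eps^{-p}t$ (you add $K_\eps$ to ensure positivity, which is a reasonable fix).

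The genuine gap is in \eqref{ex2} for $p>2$. Your traveling wave $u(x,t)=\psi(x_1-ct)+K$, with $\psi$ solving $\psi''+c\psi'+|\psi'|^p=0$ and having a gradient singularity at some finite $\underline s<0$, cannot simultaneously (a) be a classical solution on all of $B_2\times(0,1]$ and (b) have gradient blowing up at $(0,t_\eps)$ with $t_\eps\to|\underline s|/c\in(0,1)$. Indeed, as $\eps\to 0$ the singular position $x_1=ct_\eps+\underline s$ tends to $0\in B_2$, so the solution ceases to be classical in $Q$; and if instead you push $\underline s$ out so that the singular position stays outside $(-2,2)$ for all $t\in(0,1]$, then $\underline s\le -2-c$, which forces $|\underline s|/c>1$ and hence $t_\eps\notin(0,1]$. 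In other words, the singularity must enter $B_2$ for the gradient to blow up at your evaluation point. The paper sidesteps this entirely by using the time-shifted forward self--similar solution $u_\eps(x,t)=U(x,t+\eps)$ with $U(x,t)=t^\gamma\phi(|x|/\sqrt t)$ from \cite{GK}: this $u_\eps$ is classical and positive on the whole of $B_2\times(0,1]$, bounded independently of $\eps$ because $0<U\le C(|x|+\sqrt t)^{1-\beta}$, and its gradient at $(x_\eps,t_\eps)=(\sqrt\eps e_1,\eps)$ is of size $\eps^{-\beta/2}\to\infty$. Similarly, your treatment of $1<p\le 2$ via parabolic rescaling of a Cauchy problem is only a sketch (it would require uniform estimates and a convergence argument); the paper instead gives explicit self--similar solutions, from \cite{QW} for $1<p<2$ and the logarithm of the heat kernel for $p=2$.
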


\subsection{Outline of proofs}
The route map of the proofs is organized in the following order, where each step uses the previous ones:

\smallskip

	\begin{itemize}
	 \setlength{\itemsep}{1pt}
	 	 \setlength{\itemindent}{-14pt}
\item[$\bullet$]Step 1.{\hskip 1mm}Li-Yau  and Bernstein type estimates (Theorems~\ref{thmLY}-\ref{propBern})

\item[$\bullet$]Step 2.{\hskip 1mm}Liouville type theorem for ancient solutions in $\R^n$ (Theorems~\ref{thmLiouvRn0}-\ref{thmLiouvRn})

	\end{itemize}
	
\vskip 1.5pt
\noindent  and then, for the half-space case:
\vskip 1.5pt

	\begin{itemize}
	 \setlength{\itemsep}{1pt}
	 	 \setlength{\itemindent}{-14pt}
		 	 	 \setlength{\parindent}{-20pt}

\item[$\bullet$]Step 3.{\hskip 1mm}A priori estimates   (Theorem~\ref{thmancient3}(ii))

\item[$\bullet$]Step 4.{\hskip 1mm}Existence of forward self-similar solution (Proposition~\ref{positivity1})

\item[$\bullet$]Step 5.{\hskip 1mm}Positivity of any ancient solution (Theorem~\ref{thmancient3}(i))

\item[$\bullet$]Step 6.{\hskip 1mm}Classification of ancient and entire solutions (Theorems~\ref{thmentire} and \ref{thmancient})

\item[$\bullet$]Step 7.{\hskip 1mm}Example of nonstationary ancient (backward self-similar) solution in Theorem~\ref{thmancient2}.

	\end{itemize}
\smallskip
Let us briefly sketch the main ideas of proofs for each step.

\smallskip

 As customary, the proofs of the Li-Yau and Bernstein type estimates \eqref{controlul0}, \eqref{grabound}
 rely on the maximum principle applied to a suitable
auxiliary function of the form $z=\phi\eta$ where $\eta$ is a cut-off. 
For \eqref{controlul0} it turns out that the right choice of $\phi$ is given by 
$\phi=a|\nabla u|^p+|\nabla u|^2-u_t$ with $0<a<1$ and estimate~\eqref{controlul} then follows by integrating  \eqref{controlul0} along a suitable space-time path.
For \eqref{grabound} a rather tricky choice is necessary 
(cf.~\eqref{choiceh}, \eqref{choicef}, \eqref{choicegK}), which is novel as compared with 
previous work (see \cite{SZ}).\footnote{ Related ideas are used in the forthcoming paper \cite{ChSoRiem} to obtain optimal local gradient estimates
for the linear heat equation on noncompact manifolds.}

The proof of the  constancy of ancient solutions in $\R^n$ under  sublinear space-time growth condition (Theorem~\ref{thmLiouvRn})
is a rather direct consequence of our new Bernstein type estimate in Theorem~\ref{propBern},
 but the proof of Theorem~\ref{thmLiouvRn0} (for $p\ge 2$) under the weaker, single-time, space only growth condition \eqref{growthassumpt0} 
requires a more delicate combination with the Li-Yau estimate in Theorem~\ref{thmLY}, suitable integral estimates and a comparison argument.

The proof of the a priori estimates in Theorem~\ref{thmancient3}(ii)  requires several steps.
We first establish some local integral estimates for the positive and negative parts of $u$  (see subsection~\ref{SubSecInt}), obtained by suitable multiplier 
and differential inequality arguments.
We next convert the integral estimates to pointwise ones by making use of our Li-Yau type estimate   \eqref{controlul}.
We then conclude by using a further scaling argument and our Bernstein type estimate.

As for our backward and forward self-similar solutions (Steps 3 and 6),
since we are looking for one-dimensional solutions,
their existence and required asymptotic properties are established by ODE methods,
including shooting arguments and suitable auxiliary functions.

The proof of the positivity of ancient solutions  (Theorem~\ref{thmancient3}(i)) relies, for $p>2$,
 on comparison with a special,
negative forward self-similar solution $\underline u$, with superlinear growth of $\underline u$ at space infinity.
For this, the applicability of the comparison principle crucially relies on the, sublinear in $x_n$, a priori estimate 
in Theorem~\ref{thmancient3}(ii) for the negative part of~$u$.
Since, on the other hand, $\underline u$ decays in time for fixed $x$, this guarantees, via a time shift argument, 
a universal pointwise lower bound for $u$, and we conclude by scaling.

To prove the stationarity and one-dimensionality of ancient solutions in $\R^n_+$ under sublinear growth assumption with respect to~$x_n$ 
(Theorem~\ref{thmancient}),
a key preliminary step is to ensure the decay of $|\nabla u|$ and $u_t$  as $x_n\to \infty$ (uniform with respect to the remaining variables).
Under the sublinear growth condition, this is obtained as a consequence of our new Bernstein type estimate and parabolic regularity. 
With these decay properties at hand, we prove the desired result by a moving planes or translation-compactness argument
(which is a modification of ideas from \cite{FPS2020}, used there for stationary solutions). Since the priori estimates 
in Theorem~\ref{thmancient3}(ii) guarantee that any entire solution satisfies the sublinear growth condition in $x_n$,
the stationarity and one-dimensionality of entire solutions (Theorem~\ref{thmentire}) is then reduced to the case of ancient solutions.

The rest of the paper is organized as follows.
In Section~\ref{SecLY} we prove our Li-Yau and Bernstein type estimates (Theorems~\ref{thmLY}-\ref{propBern} and
 Proposition~\ref{prop-optim}).
Our Liouville type theorems for ancient solutions in~$\R^n$ (Theorems~\ref{thmLiouvRn0}-\ref{thmLiouvRn}) are then derived in Section~\ref{SecLiouvRn}.
In Section~\ref{SecPos} we prove the a priori estimates (Theorem~\ref{thmancient3}(ii)) and the positivity (Theorem~\ref{thmancient3}(i)) of ancient solutions.
Our classification results 
(Theorems~\ref{thmLiouvRn}, \ref{thmentire} and \ref{thmancient}) are then proved in Section~\ref{SecClass}.
 The existence and asymptotic behavior of backward and forward self-similar solutions (Theorem~\ref{thmancient2} and Proposition~\ref{positivity1}) are finally established in Sections~\ref{SecBwd} and \ref{SecFwd}, respectively. 
 Some technical lemmas such as parabolic Kato inequality, blowup and decay differential inequalities are recalled in an appendix.

\section{Proof of Li-Yau and Bernstein type estimates (Theorems~\ref{thmLY}-\ref{propBern})}
 \label{SecLY}

\subsection{Proof of Li-Yau type estimates (Theorem~\ref{thmLY})}

 \indent {\bf Step 1.} {\it Preliminaries.}
It suffices to consider the case $R=1$.
Indeed, the equation being invariant by rescaling, the general case will then follow by applying the result to the rescaled solution
$$v(x,t)=R^{\beta-1}u(Rx,R^2t),\quad x\in B_1,\ t\in(0,R^{-2}T).$$
The proof 
is based on maximum principle applied to the 
auxiliary fonction 
$$\phi:=a|\nabla u|^p+|\nabla u|^2-u_t,$$
multiplied by a cut-off.
 This is a modification of the argument in 
\cite{LY75} for the case $p=2$ 
where the choice $\phi:=a|\nabla u|^2-u_t$ leads to \eqref{controlulp2} in Remark~\ref{remLYp2}.
By interior parabolic regularity, we have $\nabla u, u_t\in C^{2,1}(Q_{R,T})$,
hence $\phi\in C^{2,1}(Q_{R,T})$ (owing to $p>2$).
Set 
$$\begin{cases}
&w:=|\nabla u|^2,\quad m:=p/2\ge 1,\quad
Hu:= u_t-\Delta u, \\
&b:=p|\nabla u|^{p-2}\nabla u,
\quad  \mathcal{L}u:=Hu-b\cdot\nabla u.
\end{cases}$$

{\bf Step 2.} {\it Parabolic inequality for $\phi$.}
We claim that, for some constants $c, C_0>0$ depending only on $n,p,a$, there holds:
\be{1b}
\mathcal{L}\phi\le -c(\phi^2+w^p)
\quad\hbox{in $Q_{R,T}\cap\{\phi\ge C_0\}$.}
\ee

We compute
$$
w_t=2\nabla u\cdot\nabla u_t,\ \ 
 \Delta w=2\nabla u\cdot\nabla \Delta u+2|D^2u|^2, \ \hbox{ where } |D^2u|^2:=\sum_{i,j}(\partial^2_{x_ix_j}u)^2.$$
Using $Hu=|\nabla u|^p=w^m$, we get
 \be{eqHw}
 Hw=2\nabla u\cdot\nabla (Hu)-2|D^2u|^2=2\nabla u\cdot\nabla (w^m)-2|D^2u|^2
 =b\cdot\nabla w-2|D^2u|^2.
 \ee
Next, at points where $|\nabla u|>0$ (or at any point if $m\ge 2$), we have
$\Delta w^m=mw^{m-1}\Delta w+m(m-1)w^{m-2}|\nabla w|^2$.
If $m\in(1,2)$ and $|\nabla u(x_0,t_0)|=0$ then $w^m(x,t_0)=|\nabla u(x,t_0)|^p=O(|x-x_0|^p)$ as $x\to x_0$,
hence $(\Delta w^m)(x_0,t_0)=0$. In all case, we thus have
$$\Delta w^m\ge mw^{m-1}\Delta w,$$
so that, by \eqref{eqHw},
 \be{eqHwm}
 H(w^m)\le mw^{m-1}Hw=b\cdot\nabla (w^m)-pw^{m-1}|D^2u|^2.
 \ee
Since also $\mathcal{L}(u_t)=0$,
it follows from \eqref{eqHw} and \eqref{eqHwm} that $\phi= aw^m+w-u_t$ satisfies
\be{eqLphi}
\mathcal{L}\phi\le -2|D^2u|^2.
\ee
Now observe that, since $m>1$ and $a\in(0,1)$, we have
$$\phi\le \ts\frac{a+1}{2}w^m+\bar C-u_t
=w^m-u_t- \ts\frac{1-a}{2}w^m+\bar C$$
with $\bar C=\bar C(a,p)>0$, hence
\be{eqLphi2}
w^m-u_t\ge\phi-\bar C+ \ts\frac{1-a}{2}w^m\ge \eps_0(\phi+w^m)
\quad\hbox{on $\{\phi\ge 2\bar C\}$,}
\ee
with $\eps_0=\min(\frac12,\frac{1-a}{2})$.
Since, by Cauchy-Schwarz' inequality, 
$$n|D^2u|^2\ge (\Delta u)^2=(w^m-u_t)^2,$$
claim \eqref{1b} with $C_0=2\bar C$ and $c=2n^{-1}\eps_0^2$ follows from \eqref{eqLphi} and \eqref{eqLphi2}.

\smallskip

{\bf Step 3.} {\it Parabolic inequality for $\phi$ with cut-off.}
Let $\gamma\in (0,1)$ to be chosen below 
and $\rho=2/3$. We can select a cut-off function $\eta\in C^2(\bar{B}_1)$,
with 
\be{eta0}
0<\eta\le 1\ \hbox{for $|x|<\rho$},\quad \eta=1\ \hbox{for $|x|\le 1/2$}, 
\quad \eta=0\ \hbox{for $|x|\ge\rho$,}
\ee 
 and
\be{eta}
|\nabla \eta|\le C \eta^{\gamma},\quad |\Delta \eta|+\eta^{-1}|\nabla \eta|^2\le C\eta^{\gamma}
\ee
with $C=C(\gamma)>0$.
 Indeed, it suffices to take $\eta=\theta^k$, 
 where $\theta(x)=\min\big(1,4\big(1-\frac{|x|}{\rho}\big)_+\big)$ 
 and $k\ge 2/(1-\gamma)$.
Set
 $$z=\eta \phi,\qquad Q':=B_{\rho}\times(0,T).$$
We claim that for $\tilde C,A>0$ depending only on $n,p,a$ and $\gamma=(p-1)/p$, there holds
\be{eqz3}
 \mathcal{L}_1z:=Hz+(2\eta^{-1}\nabla \eta-b)\cdot \nabla z
\le -\tilde Cz^2
\quad\hbox{in $Q'\cap\{z\ge A\}$.}
\ee 

In the rest of the proof, all computations take place in $Q'$, unless otherwise specified. 
Using $\mathcal{L}z=\eta \mathcal{L}\phi+\phi \mathcal{L}\eta-2\nabla \eta\cdot\nabla \phi$
and \eqref{1b}, we obtain
\be{eqz1}
\mathcal{L}z\le -c\eta(\phi^2+w^p)-\phi \Delta \eta -\phi b\cdot\nabla \eta -2\nabla \eta\cdot\nabla \phi
\quad\hbox{on $\{\phi\ge C_0\}$.}
\ee
We next control the various terms on the RHS of \eqref{eqz1}:
\begin{enumerate}
\item[(a)] For the second term, by using Young's inequality, we have
$$-\phi \Delta \eta \le \eps\eta \phi^2+\eps^{-1}\eta^{-1}|\Delta\eta |^2.$$
\item[(b)] For the third term, we use the definitions of $b, w$ and Young's inequality to write
\begin{align*}
-\phi b\cdot \nabla \eta
&\le \ts\frac{c}{2}\eta \phi^2+c^{-1}p^2w^{p-1}\eta^{-1}|\nabla \eta|^2\\
&= \ts\frac{c}{2}\eta \phi^2+\big((c\eta)^\frac{p-1}{p}w^{p-1}\big)\big(p^2(c\eta)^{-2+\frac{1}{p}}|\nabla \eta|^2\big)\\
&\le \ts\frac{c}{2}\eta \phi^2+c\eta w^p+\big(p^2(c\eta)^{-2+\frac{1}{p}}|\nabla \eta|^2\big)^p.
\end{align*}
\item[(c)] As for the last term, we control it as follows:
\begin{align*}
-2\nabla \eta\cdot\nabla \phi
&=-2\eta^{-1}\nabla \eta\cdot \eta\nabla \phi\\
&=-2\eta^{-1}\nabla \eta\cdot(\nabla z-\phi \nabla \eta)=-2\eta^{-1}\nabla \eta\cdot\nabla z+2\phi |\nabla \eta|^2\eta^{-1}\\
&\le -2\eta^{-1}\nabla \eta\cdot\nabla z+\eps\eta \phi^{2}+\eps^{-1}\eta^{-3}|\nabla \eta|^4.
\end{align*}
\end{enumerate}
Combining \eqref{eta0}, \eqref{eta}, \eqref{eqz1}, $(a)$-$(c)$, and choosing $\eps=c/8$ and $\gamma=\frac{p-1}{p}\ge \frac12$, we obtain
\be{eqz2}
\mathcal{L}_1z\le -C_1\eta \phi^2+C_2
\quad\hbox{ on $\{z\ge C_0\}$,}
\ee 
 where $C_1=\ts\frac{c}{4}$ and $C_2=C_2(n,p,a)>0$.
This implies claim \eqref{eqz3} with $\tilde C=C_1/2$ and $A= \max\big(C_0,(2C_2/C_1)^{1/2}\big)$.

\smallskip

{\bf Step 4.} {\it Maximum principle argument and completion of proof of \eqref{controlul0}.}
Choose $C_3>0$ such that the function $\psi(t)=C_3t^{-1}$ satisfies $\psi'(t)\ge -\tilde C\psi^2(t)$ for all $t > 0$. 
Fixing $t_0 \in (0,T)$ and setting $\tilde{z}(x,t) = z(x,t+t_0)-\psi(t)-A$,
we have 
$$\mathcal{L}_1\tilde{z}\le 0\quad \hbox{in } \big\{(x,t)\in B_{\rho}\times(0,T-t_0): \tilde{z}\ge 0\big\}.$$
Since $\tilde{z}(x,t)\le 0$ for all sufficiently small $t > 0$, as a consequence of the maximum principle 
(see, e.g., \cite[Proposition~52.4 and Remark~52.11(a)]{QSb19}), we deduce
 that $\tilde{z}\le 0$, i.e.~$z(x,t+t_0)\le C(A+ t^{-1})$ in $B_{\rho}\times(0,T-t_0)$.
Finally, using the definition of $z$ and $\phi$ and letting $t_0\to 0^+$, we obtain the desired result.

\smallskip

{\bf Step 5.} {\it Proof of \eqref{controlul}.}
Following the procedure in \cite{LY75}, we consider the space-time path
 $$\gamma(\tau)=\big(y+\tau(x-y),s-\tau(s-t)\big)\in B_{R/2}\times[t,s],\quad \tau\in[0,1].$$
We have
$$\begin{aligned}
 u(x,t)-u(y,s)
 &=u(\gamma(1))-u(\gamma(0))=\int_0^1\frac{d}{d\tau}\big[u(\gamma(\tau))\big]d\tau\\
& =\int_0^1\Big[(x-y)\cdot\nabla u(\gamma(\tau)) 
 -(s-t)u_t(\gamma(\tau))\Big]d\tau
\end{aligned}$$
so that, using \eqref{controlul0} with $a=\frac12$ and setting $K:=C(n,p)\bigl(R^{-\beta-1}+R^{1-\beta}t^{-1}\bigr)$, we get
$$
 u(x,t)-u(y,s)\le \int_0^1\Big[|x-y||\nabla u|+(s-t)\big(K-\ts\frac12|\nabla u|^p\big)\Big](\gamma(\tau))d\tau.
$$
Setting $g(r)=|x-y|r-\frac{s-t}{2} r^p$, an easy computation gives $\underset{r\ge0} \sup \ g(r) \le C(p)\big(\frac{|x-y|^p}{s-t}\big)^\beta$, 
hence \eqref{controlul}.
\qed

\begin{remark} \rm \label{remLY}
Let us justify claim \eqref{claim-thmLY}.
Assume for contradiction that there exists $R>0$ such that estimate 
\eqref{controlul} (resp., \eqref{controlul0} with some $a\in[0,1)$) 
holds with $C$ independent of $u$.
We may assume $a=0$ without loss of generality.
 By the scale invariance of the equation (cf.~\eqref{rescalinglambda}),
 it is easily seen that it must then be true for every $R>0$.
Since $\beta>1$ for $p\in(1,2)$, letting $R\to\infty$,
it follows that any solution $u$ of  \eqref{eqE0} in $D:=\R^n\times(0,1)$
 must satisfy $u_t\ge 0$ in $D$ (taking $y=x$ in the case of \eqref{controlul}).
 However it is well known (see, e.g.,~\cite{BenA,GilGK}) that for any 
 $u_0\in BC^2(\R^n)$, there exists a solution $u\in C^{2,1}(\R^n\times[0,\infty))$ and,
 taking $u_0$ such that $\nabla u_0(0)=0$ and $\Delta u_0(0)<0$,
 we deduce that $u_t(0,t)=[\Delta u+|\nabla u|^p](0,t)<0$ for $t>0$ small: a contradiction.
\end{remark}

 \subsection{Proof of Bernstein type estimate (Theorem~\ref{propBern})}
Similar to the proof of Theorem~\ref{thmLY}, it suffices to consider the case $R=1$.
The proof is done in three steps.
Whereas Steps 1 and 2 essentially follow the proof of in \cite[Theorem 3.1]{SZ}
(we give the details for completeness),
the main novelty is the key Step 3, which provides the trickier choice of auxiliary function.

\smallskip
{\bf Step 1.} {\it Gradient equation.}
 Assume that $u\in C^{2,1}(Q_{1,T})$
is a classical solution of
$$u_t-\Delta u=|\nabla u|^p \quad\hbox{ in $Q_{1,T}$}.$$
Let $f$ be a function, to be determined later, such that
\be{fmap}
\begin{aligned}
&\hbox{$f\in C^1([0,\infty))\cap C^3(0,\infty)$,\quad $f'>0$ on $(0,\infty)$,}\\
&\hbox{$f$ maps $[0,\infty)$ onto $[-M,\infty)$.}
\end{aligned}
\ee
Let us put
$$v=f^{-1}(-u),\qquad w=|\nabla v|^2.$$
By direct computation, we see that $v$ satisfies the equation
\begin{equation}  \label{eqLocBernsteinPh}
v_t-\Delta v={f''\over f'}(v)\,|\nabla v|^2
-{f'}^{p-1}(v)|\nabla v|^p.
\end{equation}
For $i=1,\dots,n$, put $v_i={\partial v\over\partial x_i}$. 
By parabolic regularity, we have $v_i\in C^{2,1}_{loc}(Q_{1,T})$.
Differentiating (\ref{eqLocBernsteinPh}) in $x_i$, we obtain
\begin{equation}  \label{eqLocBernsteinPhA}
\partial_t v_i-\Delta v_i
=\Bigl[\Bigl({f''\over f'}\Bigl)'\,w-({f'}^{p-1})'\,w^{p\over 2}\Bigl]v_i
+{f''\over f'}\,w_i -{f'}^{p-1}\,(|\nabla v|^p)_i.
\end{equation}
Here and from now on, for brevity, the variable $v$ is omitted
in the expressions $(f''/f')'$, $({f'}^{p-1})'$, etc.
Multiplying (\ref{eqLocBernsteinPhA}) by $2 v_i$, summing up, and using
$\Delta w=2\nabla v\cdot\nabla(\Delta v)+2|D^2v|^2$ (with
$|D^2v|^2=\textstyle\sum_{i,j}(v_{ij})^2$),
we deduce that
\begin{equation}  \label{eqLocBernsteinPhB}
\mathcal{L}w=-2|D^2v|^2+\mathcal{N}w \quad\hbox{ in $Q_{1,T}$},
\end{equation}
where 
$$\mathcal{L}w:= w_t-\Delta w+b\cdot\nabla w,
\quad b:=\Bigl[p{f'}^{p-1}\,w^{p-2\over 2}-2{f''\over f'}\Bigr]\nabla v,$$
\be{defNw}
\mathcal{N}w:=
-2({f'}^{p-1})'\,w^{(p+2)/2}+2\displaystyle\Bigl({f''\over f'}\Bigl)'\,w^2.
\ee

{\bf Step 2.} {\it Cut-off.}
Let $\gamma\in (0,1)$ to be chosen below. 
Fix any $\tau\in(0,T)$ and let $\eta=\eta(x,t):=\eta_1(t)\tilde{\eta}(x)$ 
where $\tilde \eta $ is the cut-off from Step~3 of the proof of Theorem~\ref{thmLY} 
and $\eta_1\in C^1([\tau/4,\tau])$ is a cut-off in time such that:
\be{defeta1}
0\le \eta_1\le 1,\ \ \eta_1(\tau/4)=0, \ \ \eta_1\equiv 1 \ \hbox{in } [\tau/2,\tau),
\ \ |\eta_1'(t)|\le C\tau^{-1}\eta_1^\gamma(t).
\ee
Put 
$$z=\eta w$$
and set $\tilde Q:=B_{3/4}\times(\tau/4,\tau]$.
In the rest of the proof,
all computations will take place in $\tilde Q\cap\{z>0\}$, unless otherwise specified,
and $C$ will denote a generic positive constant depending only on $p,n,\gamma$.
We have
$$
\mathcal{L}z =\eta\mathcal{L}w+w\mathcal{L}\eta-2\nabla\eta\cdot\nabla w
$$
and
$$
\begin{aligned}
2|\nabla\eta\cdot\nabla w|
&=4\eta_1\big|\sum_i\nabla\tilde\eta\cdot v_i\nabla
v_i\big| \leq 4\eta_1\sum_i \tilde\eta^{-1}|\nabla\tilde\eta|^2 v_i^2+\eta_1\sum_i \tilde\eta|\nabla
v_i|^2 \\
&=4\eta_1\tilde\eta^{-1}|\nabla\tilde\eta|^2 w+\eta|D^2v|^2.
\end{aligned}$$
Consequently,
\begin{equation}  \label{eqLocBernsteinPh1}
\mathcal{L}z+\eta|D^2v|^2\leq \eta\, \mathcal{N}w + (\mathcal{L}\eta+4\eta_1\tilde\eta^{-1}|\nabla\tilde\eta|^2)w.
\end{equation}
Since
$|\eta_t|+|\Delta\eta|+\eta_1\tilde\eta^{-1}|\nabla\tilde\eta|^2\le C(1+\tau^{-1})\eta^\gamma$
owing to \eqref{eta}, \eqref{defeta1}, and
$$
|(b\cdot\nabla\eta)w|
\leq C\eta^\gamma\Bigl|p{f'}^{p-1}\,w^{(p+1)/2}-2{f''\over f'}w^{3/2}\Bigr|,
$$
we deduce from \eqref{eqLocBernsteinPh1} and property of $\eta$ that
\begin{equation}  \label{eqLocBernsteinPh2}
\mathcal{L}z+\eta|D^2v|^2\leq \eta\, \mathcal{N}w +
C\eta^\gamma(\tau^{-1}+1) w+ C\eta^\gamma\Bigl|p{f'}^{p-1}\,w^{(p+1)/2}-2{f''\over
f'}w^{3/2}\Bigr|.
\end{equation}

{\bf Step 3.} {\it Choice of auxiliary function and completion of proof of Theorem~\ref{propBern}.}
Let $g\in C([0,\infty))\cap  C^2(0,\infty)$, to be determined below, satisfy
\be{hypg1}
g,g'>0\ \hbox{and  $g''\le 0$ in $(0,\infty)$,} 
\ee
\be{hypg2}
\int_0^1\frac{d\tau}{g(\tau)}<\infty,\quad \int_1^\infty\frac{d\tau}{g(\tau)}=\infty.
\ee
We set
\be{choiceh}
h(s)=\int_0^s\frac{d\tau}{g(\tau)}
\ee
and then
\be{choicef}
f(v)=h^{-1}(v)-M\quad \hbox{i.e.}\quad v=h(M-u),
\ee
and note that $v\geq 0$ in $Q_{1,T}$ due to $u\leq M$ and $g>0$ in $[0,\infty)$. 
With this
choice, we see that \eqref{fmap} is satisfied, and we have
\begin{align*}
f'&=g\circ h^{-1},\qquad \quad
\big({f'}^{p-1}\big)'=(p-1)\big(g'g^{p-1}\big)\circ h^{-1},\\
\frac{f''}{f'}&=g'\circ h^{-1},\qquad \quad
\left(\frac{f''}{f'}\right)'= \big(gg''\big)\circ h^{-1}.
\end{align*}
Formula \eqref{eqLocBernsteinPh2} then implies
\be{eqLzN}
\mathcal{L}z\leq \eta\, \mathcal{N}w +
\underbrace{C\eta^\gamma (\tau^{-1}+1) w}_{T_1}+\underbrace{C\eta^\gamma g^{p-1}(h^{-1}(v)) w^{(p+1)/2}}_{T_2}+\underbrace{C\eta^\gamma g'(h^{-1}(v))w^{3/2}}_{T3},
\ee
whereas
$$
\mathcal{N}w=\underbrace{-2(p-1)\big(g'g^{p-1}\big)(h^{-1}(v))w^{(p+2)/2}}_{T_4}+\underbrace{2 \big(gg''\big)(h^{-1}(v))w^2}_{T_5}.
$$
Recall that $g'>0$ and $g''\le 0$.
We shall now split and gather the terms on the right hand side of \eqref{eqLzN}
so as to control each of the positive terms $T_1, T_2, T_3$ by the negative terms $T_4, T_5$.
Omitting without risk of confusion the variables $h^{-1}(v)$ in the terms involving $g$
and since we restrict to the set $\{z>0\}$ (where $0<\eta\le 1$), we may write
\begin{align*}
\mathcal{L}z
&\leq  -2(p-1)\eta  g'g^{p-1} w^{\frac{p+2}{2}}+2 \eta gg''w^2\\
&\qquad\qquad+C\eta^\gamma (\tau^{-1}+1) w+C\eta^\gamma g^{p-1} w^{\frac{p+1}{2}}+C\eta^\gamma g'w^{\frac32}\\
&=  -2(p-1)g'g^{p-1} \eta^{-\frac{p}{2}} z^{\frac{p+2}{2}}+2 gg''\eta^{-1}z^2\\
&\qquad\qquad+C\eta^{\gamma-1} (\tau^{-1}+1) z
+C\eta^{\gamma-\frac{p+1}{2}} g^{p-1} z^{{\frac{p+1}{2}}}+C\eta^{\gamma-\frac32}g'z^{\frac32}\\
&=\eta^{\gamma-1}z \tilde T_1+\eta^{\gamma-\frac{p+1}{2}} g^{p-1}z^{\frac{p+1}{2}} \tilde T_2+\eta^{\gamma-\frac32}z^{\frac32} \tilde T_3,\end{align*}
where
$$\begin{aligned}
\tilde T_1&=C (\tau^{-1}+1)-(p-1)g'g^{p-1} \eta^{1-\gamma-\frac{p}{2}} z^{p/2}+gg''\eta^{-\gamma}z\\
\tilde T_2&=  C  -\textstyle\frac{p-1}{2}g' \eta^{\frac12-\gamma}z^{1/2}\\
\tilde T_3&=Cg' -\textstyle\frac{p-1}{2}g'g^{p-1} \eta^{\frac{3-p}{2}-\gamma} z^{(p-1)/2}+gg''\eta^{\frac12-\gamma}z^{1/2}.
\end{aligned}$$
Taking $\gamma=\gamma(p)\in\big(\max(\frac12,\frac{3-p}{2}),1\big)$ and using \eqref{hypg1}, we obtain
\be{ineqT123}
\left.
\begin{aligned}
\tilde T_1&\le C (\tau^{-1}+1)-(p-1)g'g^{p-1} z^{p/2}+gg''z\\
\tilde T_2&\le  C -\textstyle\frac{p-1}{2}g' z^{1/2} \\
\tilde T_3&\le Cg' -\textstyle\frac{p-1}{2}g'g^{p-1} z^{(p-1)/2}+gg''z^{1/2}.
\end{aligned}
\quad\right\}
\ee
In view of ensuring the negativity of $\tilde T_i$ at points where $z$ is large, we thus look for a function $g$ satisfying
\be{choiceg0}
g'\ge 1,\quad g'g^{p-1} - gg'' \ge c(p)(g'+\tau^{-1})
\ee
for some $c(p)>0$.
Choosing 
\be{choicegK}
g(s)=s+Ks^\theta,\quad \hbox{with $\theta=1/p$ and $K=(1+\tau^{-\theta})$},
\ee
we obtain
$$\begin{aligned}
g'g^{p-1}-gg''
&=\big(1+K\theta s^{\theta-1}\big)\big(s+Ks^\theta\big)^{p-1}+\theta(1-\theta)Ks^{\theta-2}\big(s+Ks^\theta\big)\\
&\ge \theta s^{p-1} \big(1+K^p s^{p(\theta-1)}\big)+\theta(1-\theta)\big(Ks^{\theta-1}+(Ks^{\theta-1})^2\big)\\
&\ge \theta(1-\theta) (s^{p-1}+K^p+Ks^{\theta-1}+(Ks^{\theta-1})^2)\\
&\ge \ts\frac{\theta(1-\theta)}{2} (g'(s)+\tau^{-1})
\end{aligned}$$
hence \eqref{choiceg0}, and
we see that \eqref{hypg1}, \eqref{hypg2} are also satisfied. 
It follows from \eqref{ineqT123}, \eqref{choiceg0} that there exists $A=A(n,p)>0$ such that $\tilde{T}_i\le 0$ 
whenever $z\ge A$. It then follows that 
$$
\mathcal{L} z\leq 0
\quad\hbox{ in $\{(x,t)\in \tilde Q;\ z(x,t)\geq A\}$}.
$$
Since $z=0$ on the parabolic boundary of $\tilde Q$,
 the maximum principle yields $z\le A$ in $\tilde Q$.
Finally, using $z=\eta|\nabla v|^2$, \eqref{choiceh}, \eqref{choicef} and the definition of $\eta$, this implies
$$|\nabla u|=|\nabla v|\, g(M-u)\le A^{1/2} g(M-u)
\le C\Big\{M-u+(1+\tau^{-\frac1p})\left(M-u\right)^{\frac1p}\Big\}$$ 
in $B_{1/2}\times[\tau/2,\tau]$.
For each $t\in (0,T)$, applying this 
with $\tau=t$ yields 
$$
 |\nabla u|\le C(n,p)\Big\{M-u+(1+t^{-\frac1p})\left(M-u\right)^{\frac1p}\Big\}
\ \hbox{ in $B_{1/2}\times(0,T]$,}
$$
 which is equivalent to \eqref{grabound} for $R=1$.
\qed

 \subsection{Proof of Proposition~\ref{prop-optim}}
(i) We consider the forward self-similar solutions 
 \be{propphi1}
 v(x,t)=t^\gamma \phi(x_n/\sqrt t),\quad\gamma=(1-\beta)/2,
 \ee
 of the equation $v_t-\Delta v+|\nabla v|^p=0$ in the half-space,
  given for $p>2$ by 
 Lemmas~\ref{lemJ1}-\ref{lemJ2} below with $\alpha\in J_1$.
 We know that there exists $\lambda>1$ such that
 \be{propphi2}
\phi''(y)<0<\phi'(y),\quad y\ge \lambda.
\ee
  For each $R>0$, we then set  $u_R(x,t)=-v(x+\lambda R e_n,t)$, defined in 
 $Q_R$. For $t_R=R^2$, it satisfies
 $$\big(a|\nabla u_R|^p-\partial_t u_R\big)(0,t_R)=\big((a-1)|\nabla u_R|^p-\Delta u_R\big)(0,t_R)
=L(a,\lambda) R^{-\beta-1}$$
where $L(a,\lambda)=(a-1)|\phi'(\lambda)|^p-\phi''(\lambda)$ in view of \eqref{propphi1}.
Taking $a\in(0,1)$ close to $1$, we have $L(a,\lambda)>0$ by \eqref{propphi2}, hence \eqref{ex0}.
 
  \smallskip

  (ii) Recall $Q=B_2\times (0,1]$.

\smallskip

$\bullet$ Proof of \eqref{ex1}.  Consider the solutions 
    \be{ExOptim2a}
    u_\eps(x,t)=\eps^{-1}x_1+\eps^{-p}t,\quad (x,t)\in Q.
    \ee
    Setting
$t_\eps=1$, $x_\eps=e_1$,
we have $|\nabla u_\eps(e_1,1)|=\eps^{-1}=M_\eps-u_\eps(e_1,1)\gg(\frac{M_\eps-u_\eps(e_1,1)}{t_\eps})^{1/p}$,
hence \eqref{ex1}.

\smallskip

$\bullet$ Proof of \eqref{ex2}. First assume $p>2$. We may assume $n=1$ (then setting $u_\eps(x,t)=u_\eps(x_1,t)$).
Let 
\be{fwdform}
 U(x,t)=t^\gamma \phi(x/\sqrt t),\quad x\in\R^n,\ t>0,\qquad \gamma=(1-\beta)/2,
 \ee 
be the forward self-similar solution from \cite[Section~4]{GK} with $p>2$ and $n=1$, which satisfies
 \be{estimUGGK}
 0<U(x,t)\le C(|x|+\sqrt{t})^{1-\beta}.
 \ee
 Set 
 \be{defueps}
 u_\eps(x,t)=U(x,t+\eps),\quad t_\eps=\eps,\quad |x_\eps|=\sqrt\eps.
 \ee
 Since $\beta<1$, we have $M_\eps\le C$ by \eqref{estimUGGK} and $|\nabla u_\eps(x_\eps,t_\eps)|=C\eps^{-\beta/2}$, 
 hence \eqref{ex2}.
 
Next consider the case $p\in (1,2)$.
Instead of $U$, we use a forward self-similar solution
from \cite[Theorem~2.1]{QW} (after changing $u$ in $-u$, since the result is stated for the equation with nonlinearity $-|\nabla u|^p$).
It takes the form \eqref{fwdform}, where the profile satisfies $\phi=\phi(r)$, with $r=|x|$, 
 $\phi,\phi'\in L^\infty(0,\infty)$ and $\phi<0<\phi'$ in $(0,\infty)$. Keeping the notation \eqref{defueps}, it follows that
 $|u_\eps(x_\eps,t_\eps)|\le C\eps^{(1-\beta)/2}$ and $|\nabla u_\eps(x_\eps,t_\eps)|=C_1\eps^{-\beta/2}$, 
so that we reach the same conclusion.

Finally, for $p=2$, it suffices to replace $U$ with the solution $U(x,t)=-\frac{n}{2}\log t-\frac{|x|^2}{4t}$
(obtained as the $\log$ of the heat kernel),
which now yields $M_\eps-u_\eps(x_\eps,t_\eps)=\frac{n}{2}\log 2+\frac{1}{16}=c$ and $|\nabla u_\eps(x_\eps,t_\eps)|=C_1\eps^{-1/2}$,
 hence again~\eqref{ex2}.
\qed

\section{Proof of Liouville type theorem for ancient solutions in~$\R^n$ (Theorems~\ref{thmLiouvRn0}-\ref{thmLiouvRn})}
 \label{SecLiouvRn}

As a consequence of the Bernstein estimate in Theorem~\ref{propBern}, we first have the following lemma,
 which in particular contains Theorem~\ref{thmLiouvRn}.

  \begin{lem}\label{lemLiouvRn}
Let $p>1$ and let $u\in C^{2,1}(Q)$ be a solution of \eqref{eqE0} in $Q:={\R^n}\times (-\infty,0)$
satisfying \eqref{growthassumpt}.
Then $u$ is constant. 
\end{lem}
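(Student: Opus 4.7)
\textbf{Proof plan for Lemma~\ref{lemLiouvRn}.} The plan is to exploit the sharpened Bernstein estimate of Theorem~\ref{propBern} on a translated cylinder whose \emph{time} scale (not the naively parabolic $R^2$, but $R$) is calibrated to the sublinear space-time hypothesis~\eqref{growthassumpt}. Fix an arbitrary $(x_0,t_0)\in Q$ and, for each $R\ge 1$, set
\[
v(y,s):=u(x_0+y,\,t_0-R+s),\qquad (y,s)\in B_{2R}\times(0,R).
\]
Since $t_0-R+s\le t_0<0$, the function $v$ is a classical solution of \eqref{eqE0} on $B_{2R}\times(0,R)$, and the sup
\[
M_R\ :=\ \sup_{B_{2R}(x_0)\times(t_0-R,\,t_0)}u\ =\ \sup_{B_{2R}\times(0,R)}v
\]
is finite by continuity on the (relatively compact) space-time cylinder. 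Any $(x,t)$ in the untranslated cylinder satisfies $|x|+|t|\le |x_0|+|t_0|+3R$, so the growth condition \eqref{growthassumpt} gives $M_R=o(R)$ as $R\to\infty$. This is the only place where the hypothesis enters.

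Next I apply Theorem~\ref{propBern} to $v$ with radius $2R$ on a slightly extended time interval (say $(0,2R)$, obtained by shrinking the time-shift and then choosing $T$ accordingly), evaluating at $(0,s_0)$ for $s_0\in(R/2,R)$ and letting $s_0\to R^-$ by continuity of $\nabla u$. Observing that for $R\ge 1$ one has $(2R)^2\wedge s_0=s_0\to R$, the estimate becomes
\[
|\nabla u(x_0,t_0)|\ \le\ C(n,p)\left\{\frac{M_R-u(x_0,t_0)}{2R}+\Bigl(\frac{M_R-u(x_0,t_0)}{R}\Bigr)^{1/p}\right\}.
\]
As $M_R=o(R)$ and $1/p>0$, both terms on the right tend to $0$ as $R\to\infty$, so $\nabla u(x_0,t_0)=0$. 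Since $(x_0,t_0)\in Q$ was arbitrary, $\nabla u\equiv 0$ on $Q$; then \eqref{eqE0} reduces to $u_t=\Delta u+|\nabla u|^p=0$, so $u$ is constant on $Q$.

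Once Theorem~\ref{propBern} is in hand, no serious obstacle arises: the argument is essentially a one-shot application of the improved Bernstein estimate. The one point worth emphasising, and the reason the earlier estimate~\eqref{graboundSZ} of \cite{SZ} would not suffice here, is the \emph{calibration of scales}: the new leading term $(M-u)/R$ in \eqref{grabound} decays under the sublinear bound $M_R=o(R)$, whereas a parabolic time window of length $R^2$ would only yield $M_R=o(R^2)$, for which the term $(M_R-u(x_0,t_0))/R$ need not vanish. Choosing the time length equal to the spatial radius synchronises the two summands in \eqref{grabound} with the linear growth rate in the hypothesis and delivers the conclusion without further work.
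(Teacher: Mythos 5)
Your proof is correct and follows essentially the same strategy as the paper's: translate in space and time so that $(x_0,t_0)$ lands at the top of a cylinder of spatial radius $\sim R$ and \emph{time length} $\sim R$ (crucially not $R^2$), observe that the space-time sublinearity \eqref{growthassumpt} then yields $M_R=o(R)$, and invoke the sharpened Bernstein estimate \eqref{grabound} whose additive structure $(M-u)/R+\bigl((M-u)/(R^2\wedge t)\bigr)^{1/p}$ makes both terms vanish as $R\to\infty$. The only differences from the paper's argument are cosmetic (your radius is $2R$ rather than $R$, and you flag the endpoint-of-time-interval issue explicitly whereas the paper silently uses a slightly longer window); your closing remark about why the time scale must be linear rather than parabolic, and why \eqref{graboundSZ} would not suffice, is exactly the right observation.
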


\begin{proof} Fix $x_0\in\R^n$ and $t_0\in (-\infty,0)$.
Let $R\geq 1$, $Q=B_R\times (0,R)$,
and consider the function $U(x,t) = u(x+x_0,t+t_0-R)$.
We have $U\leq M_R$ in $\overline Q$, where
$$M_R:=\sup_{B(x_0,R)\times
(t_0-R,t_0)}u\le \eps(R)R,\quad\hbox{ with }\lim_{R\to \infty} \eps(R)=0,$$
owing to (\ref{growthassumpt}).
Applying Theorem~\ref{propBern} to the function $U$ in $Q$, it follows that, {for $R\ge 1$,}
\begin{align*}
|\nabla u(x_0,t_0)|=|\nabla U(0,R)|
&\le C\big(\ts\frac{M_R-U(0,R)}{R}+\big(\frac{M_R-U(0,R)}{R^2}\big)^{\frac1p}
+\big(\frac{M_R-U(0,R)}{R}\big)^{\frac1p}\big)\\
&\le C\big(\eps(R)-\ts\frac{u(x_0,t_0)}{R}+\big(\eps(R)-\frac{u(x_0,t_0)}{R}\big)^{\frac1p}\big) 
\end{align*}
and the result follows by letting $R\to\infty$ in the last inequality. 
\end{proof}

Starting from Lemma~\ref{lemLiouvRn}, the proof of Theorem~\ref{thmLiouvRn0}, under the weaker assumption \eqref{growthassumpt0}, 
is divided in three steps.

\begin{proof}[Proof of Theorem~\ref{thmLiouvRn0}]

{\bf Step 1.} {\it Constancy in the past.} 
We claim that there exist a constant $B\in \R$ and a maximal $t_1\in (-\infty,0]$ such that
\be{uequivB}
u\equiv B\quad\hbox{ in $\R^n\times(-\infty,t_1)$.}
\ee

Letting $R\to\infty$ and then $a\to 1$
in the Li-Yau type estimate \eqref{controlul0}, we have
\be{LY2}
u_t-|\nabla u|^p=\Delta u\ge 0\quad\hbox{ in $Q$.}
\ee
In particular
$$\sup_{t\le t_0} u(x,t)\le u(x,t_0),\quad x\in \R^n$$
so that assumption \eqref{growthassumpt0} imples \eqref{growthassumpt}, 
and Lemma~\ref{lemLiouvRn} guarantees \eqref{uequivB} with $t_1$ replaced by $t_0$.
It thus suffices to 
let $t_1:=\max \{\tau<0;\ u\equiv B \hbox{ in $\R^n\times(-\infty,\tau]$}\}\in [t_0,0]$.

\smallskip

{\bf Step 2.} {\it A priori estimates for $t>t_1$.} 
We may assume $B=0$ without loss of generality.
Assume for contradiction that $t_1<0$.
Then $u(\cdot,t_1)\equiv 0$ by \eqref{uequivB} and continuity.

To reach a contradiction we want to show that $u$ actually has to be identically $0$ for $t>t_1$ close to $t_1$.
This is nontrivial, since uniqueness for the Cauchy problem without growth assumption on the solution 
cannot be taken for granted. Our next task is thus to obtain suitable a priori estimates for $t>t_1$.
We claim that there exists a constant $C_0>0$ (depending on $u$)
such that
\be{AEm}
0\le u\le C_0(1+|x|)^{\beta+1}, \quad |\nabla u|\le C_0(1+|x|)^\beta\ \ \hbox{ in $\R^n\times [t_1,t_1/2]$.}
\ee
The inequality $0\le u$ in \eqref{AEm} immediately follows from \eqref{LY2}.

Let us show the upper bound of $u$ in \eqref{AEm}.
First, since $u$ is a classical solution, there exists a constant $M>0$ such that
\be{bounduB1}
u\le M \ \ \hbox{ in $\overline{B_1}\times[t_1,t_1/4]$.}
\ee
We shall write $u(x,t)=u(r,\omega,t)$ where $(r,\omega)$ are the hyperspherical coordinates.
Fix $t\in [t_1,t_1/4]$, $R>1$ and any direction $\omega\in S^{n-1}$.
 By H\"older's inequality, we have, for all $r\in(1,R)$,
 $$\begin{aligned} 
 u^p(r,\omega,t)
&\le \Big(M+ \int_1^r |u_r(\rho,\omega,t)| \, d\rho\Big)^p\\
&\le 2^{p-1}M^p+ 2^{p-1}\Big(\int_1^R |u_r(\rho,\omega,t)| \, d\rho\Big)^p\\
&\le 2^{p-1}M^p+ (2R)^{p-1}\int_1^R |u_r(\rho,\omega,t)|^p \, d\rho,
 \end{aligned}$$
 and this remains true for $r\in (0,1]$ owing to  \eqref{bounduB1}. Applying H\"older's inequality once again we get
  $$\begin{aligned} 
\Big(\int_0^R u(r,\omega,t)\, dr\Big)^p
 &\le R^{p-1} \int_0^R u^p(r,\omega,t)\, dr\\
&\le CR^pM^p+ CR^{2p-1}\int_0^R |u_r(r,\omega,t)|^p \, dr.
 \end{aligned}$$
 Here and hereafter $C, c$ denote generic positive constants independent of $R, t, \omega$ (but possibly depending on $u$ and $t_1$).
Next integrating the inequality $u_t\ge |\nabla u|^p$ from \eqref{LY2} along the line segment of endpoints $0$ and $R\omega$, we deduce that,
for all $t\in(t_1,t_1/4)$,
  $$\begin{aligned} 
  \frac{d}{dt} \int_0^R u(r,\omega,t)\, dr
  &\ge  \int_0^R |(\nabla u)(r,\omega,t)|^p\, dr\\
&\ge cR^{1-2p}\Big(\int_0^R u(r,\omega,t)\, dr\Big)^p-CR^{1-p}.
 \end{aligned}$$
We deduce from Lemma~\ref{blowupineq}(i) that, for all $t\in(t_1,t_1/2]$,
  $$\int_0^R u(r,\omega,t)\, dr\le C\big[R^{\frac{2p-1}{p-1}}+R\big]\le C R^{\beta+2}.$$
Consequently,
    $$\int_0^R u(r,\omega,t)r^{n-1}\, dr\le R^{n-1}\int_0^R u(r,\omega,t)\, dr\le C R^{n+\beta+1},$$
hence
 $$\int_{B_R}u(t)\,dx=\int_{S^{n-1}}\int_0^R u(r,\omega,t)r^{n-1}\, drd\omega\le C R^{n+\beta+1},\quad t_1\le t\le t_1/2.$$

  Now, since $u(\cdot,t)$ is subharmonic by \eqref{LY2}, it follows from the mean value inequality that, 
  for each $x$ with $|x|>1$,
$$u(x,t)\le \fint_{B_{|x|}(x)}u\,dx\le C|x|^{-n}\int_{B_{2|x|}}u\,dx\le C |x|^{\beta+1},\quad t_1\le t\le t_1/2.$$
This combined with \eqref{bounduB1} guarantees the upper bound on $u$ in \eqref{AEm}.
Finally, since the latter bound is actually true in $\R^n\times(-\infty,t_1/2)$, 
combining it with the Bernstein estimate in Theorem~\ref{propBern}, we obtain, after a time shift,
$$|\nabla u|\le C(n,p)\left(\frac{R^{\beta+1}-u}{R}+\Bigl(\frac{R^{\beta+1}-u}{R^2}\Bigr)^{1/p}\right)\ \ \hbox{in $B_R\times(-\infty,t_1/2)$},$$
which implies the gradient bound in \eqref{AEm}.
\smallskip

{\bf Step 3.} {\it Comparison argument and conclusion.} 
We use a modification of a device from \cite[Section~3]{GGK}, where it is used for a different purpose.
Fix any $\eps>0$ and set
$$v_\eps(x,t)=\eps e^{2Lt}(1+|x|^2)^k$$
where $k=\beta+2$ and $L>0$ is to be chosen.
By direct computation, we have 
$$\partial_t v_\eps=2Lv_\eps,\quad \nabla v_\eps=\frac{2kx}{1+x^2}v_\eps,\quad \Delta v_\eps\le \lambda v_\eps.$$
for some $\lambda=\lambda(n,k)>0$. 
Taking $L>\lambda$, we get
$$\mathcal{L}v_\eps:=\partial_t v_\eps-\Delta v_\eps-(2k)^{-1}L(1+|x|)|\nabla v_\eps|\ge (2L-\lambda-L)v_\eps>0.$$
On the other hand, taking $L>\max(2kC_0^{p-1},\lambda)$ and using the second part of \eqref{AEm}, we have, in $\R^n\times [t_1,t_1/2]$,
  $$\begin{aligned} 
  \mathcal{L}u
  &=|\nabla u|^p-(2k)^{-1}L(1+|x|)|\nabla u|=\big(|\nabla u|^{p-1}-(2k)^{-1}L(1+|x|)\big)|\nabla u|\\
    &\le \big(C_0^{p-1}-(2k)^{-1}L\big)(1+|x|)|\nabla u|<0.
     \end{aligned}$$
     In addition, the first part of \eqref{AEm} guarantees that, for any $\eps>0$, 
     there exists $R_\eps>1$ such that $u<v_\eps$ for $|x|\ge R_\eps$ and $t\in(t_1,t_1/2]$.
     Since $u(\cdot,t_1)\equiv 0$, it follows from the maximum principle that $u\le v_\eps$ in $\R^n\times(t_1,t_1/2]$.
     Letting $\eps\to 0$, we deduce that $u\equiv 0$ in $\R^n\times(-\infty,t_1/2]$,
     which contradicts the maximality of $t_1$ in \eqref{uequivB}.
     Consequently $t_1=0$ and the theorem is proved.
\end{proof}

\section{Proof of positivity and a priori estimates (Theorem~\ref{thmancient3})}
 \label{SecPos}
 
\subsection{Integral estimates for $p>2$}
 \label{SubSecInt}
 
We here provide preliminary integral estimates for the positive and negative parts of $u$.
Since $p>2$ we may fix
$$\alpha\in (1,p-1).$$
In this subsection $C, C_i$ denote generic positive constants depending only on $n,p,\alpha$,
and we set $s_+=\max(s,0)$, $s_-=\max(-s,0)$.
 
 \begin{prop}\label{1L3}
  Let $p>2$ and let $u$ be an ancient solution of \eqref{eqE1}. 
 For all $a\in\R^{n-1}$ and $t<0$, we have 
 \be{1integralcontrol}
\int_{B_2'(a)\times(0,2)}u_+(x,t) \,x_n^\alpha\,dx\le C\big(1+|t|^{-\beta}\big),
  \ee
  and
 \be{vap3}
 \int_{B_2'(a)\times(0,2)} u_-(x,t)  \,x_n^\alpha \,dx\le C.\ee
 \end{prop}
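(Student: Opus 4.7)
The plan is to test the equation (or its Kato-type distributional consequence) against the weight
$$ \psi(x) := x_n^\alpha\, \vap(x')^k\, \zeta(x_n)^j, $$
where $\vap \in C_c^\infty(\R^{n-1})$ is a cutoff equal to $1$ on $B_2'(a)$ and supported in $B_3'(a)$, $\zeta \in C^\infty([0,\infty))$ equals $1$ on $[0,2]$ and is supported in $[0,3]$, and the exponents $k, j$ are chosen large enough that derivatives of the cutoffs are controlled by lower powers of the cutoffs themselves. Because $\alpha > 1$, both $\psi$ and $\partial_{x_n}\psi$ vanish on $\partial\R^n_+$; together with $u|_{x_n=0}=0$ this ensures that all boundary terms disappear in the integrations by parts below.

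For the negative part I would invoke the parabolic Kato inequality $\partial_t u_- - \Delta u_- \le -|\nabla u_-|^p$ (from the appendix), test against $\psi$, and with $J_-(t) := \int u_-\psi\, dx$ obtain
$$ J_-'(t)\ \le\ \int u_-\,\Delta\psi\, dx\ -\ \int |\nabla u_-|^p\,\psi\, dx. $$
The one-dimensional weighted Hardy inequality applied to $u_-(x',\cdot)$ in the $x_n$-direction (available precisely because $\alpha < p-1$ and applicable since $u_-(x',0)=0$), combined with Hölder's inequality, yields $J_-(t)^p \le C\int |\nabla u_-|^p\,\psi\, dx$. The singular contribution $\alpha(\alpha-1)\, x_n^{\alpha-2}\vap^k\zeta^j$ coming from $\Delta\psi$ is integrable thanks to $\alpha > 1$ and can be absorbed via another Hardy/Young application. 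Altogether this yields the decay-type differential inequality $J_-'(t) \le -cJ_-(t)^p + C$ on $(-\infty,0)$, which forces $J_-$ to be uniformly bounded: if $J_-(t_0) > (2C/c)^{1/p}$ at some $t_0<0$, comparison with $w' = (c/2) w^p$ in reversed time would force blowup at finite backward time, contradicting global existence on $(-\infty,0)$. This gives \eqref{vap3}.

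For the positive part, Kato's inequality would point the wrong way, so I would instead test the equation itself against $\psi$ and obtain, with $I(t) := \int u\psi\, dx = J_+(t) - J_-(t)$,
$$ I'(t)\ =\ \int u\,\Delta\psi\, dx\ +\ \int|\nabla u|^p\,\psi\, dx. $$
The same Hardy/Hölder argument applied to $u_+$ gives $\int|\nabla u|^p\psi \ge \int|\nabla u_+|^p\psi \ge c J_+^p$, while $\bigl|\int u\,\Delta\psi\bigr| \le C(J_+ + J_-) \le C J_+ + C$ thanks to the bound on $J_-$ just established. Since $J_+ = I + J_- \le I + C$, these estimates combine, via a Young absorption, into the blowup-type differential inequality $I'(t) \ge c' I(t)^p - C'$ whenever $I(t)$ exceeds a fixed threshold. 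The blowup differential inequality lemma from the appendix then yields $I(t) \le C(1 + |t|^{-\beta})$ for $t < 0$: a larger value of $I$ at some $t_0 < 0$ would force blowup strictly before $t=0$, contradicting ancient existence. Combining with $J_- \le C$ yields $J_+(t) \le C(1 + |t|^{-\beta})$, which is \eqref{1integralcontrol}.

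The main obstacle I anticipate is the balancing act between the singular weight $x_n^{\alpha-2}$ from $\Delta\psi$ and the coercive term $\int |\nabla u_\pm|^p\psi$: integrability of the former requires $\alpha > 1$, whereas the Hardy inequality with exponent $p$ requires $\alpha < p-1$, so the restriction $\alpha \in (1, p-1)$ is used essentially at both ends. Calibrating the Young and Hardy constants so that the singular contribution is strictly dominated by the coercive one, uniformly in the cutoff parameters $k$, $j$, will be the delicate technical point.
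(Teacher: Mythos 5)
Your overall strategy --- test against a singular weight of order $x_n^\alpha$ supported in a cylinder, use the parabolic Kato inequality for $u_-$, and derive decay/blowup differential inequalities for the weighted integrals --- is the same as in the paper, and the use of a one-dimensional Hardy inequality in the $x_n$-variable plays the same role as the paper's weighted Poincar\'e inequality (Lemma~\ref{genlem}(ii)). However, there is a genuine gap in your treatment of the boundary terms.

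You claim that because $\psi$ and $\partial_{x_n}\psi$ vanish at $x_n=0$ and $u|_{x_n=0}=0$, ``all boundary terms disappear.'' This is not justified as written. By the definition in the paper, $u$ is only assumed to belong to $C^{2,1}(Q)\cap C(\overline Q)$: it is continuous up to the boundary but \emph{not} a priori $C^1$ there, so $\nabla u$ has no pointwise meaning on $\{x_n=0\}$ and the integration by parts must be carried out on the truncated domain $\{x_n>\eps\}$ and then passed to the limit $\eps\to 0$. After one integration by parts you pick up a flux term of the type $\int_{\{x_n=\eps\}}\psi\,u_\nu$, which is of size $\eps^\alpha\,|\nabla u|_{\{x_n=\eps\}}$; this does \emph{not} tend to $0$ merely because $\psi(\cdot,\eps)\to 0$, since $|\nabla u|$ could blow up as $x_n\to 0$ (the solution is allowed to have gradient singularities at the boundary). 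The paper handles this precisely via the Bernstein estimate of Theorem~\ref{propBern}: since $p>2$ it yields $|\nabla u(x,t)|\le C(M_0+1)\,x_n^{-1}$ on the relevant cylinder (inequality~\eqref{BernSZ}), so the flux term is $O(\eps^{\alpha-1})\to 0$, and it is here that the lower bound $\alpha>1$ is used essentially. Your proof needs to invoke this a priori gradient bound (or some substitute) and carry out the $\eps$-truncation; without it, the vanishing of the boundary terms is an unproven assertion. A secondary, smaller imprecision: in the positive-part step you write $\bigl|\int u\,\Delta\psi\bigr|\le C(J_++J_-)$, but $\Delta\psi\sim x_n^{\alpha-2}$ is not comparable to $\psi\sim x_n^\alpha$, so this pointwise domination fails; as you yourself note for the negative part, the singular contribution must be absorbed via a Hardy/Young step rather than bounded directly by the weighted $L^1$-norm.
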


  In view of its proof we may assume $a=0$ without loss of generality and we define the cylinders
$$\Omega=B'_3\times(0,3),\qquad
\Omega_\eps=\Omega\cap\{x_n>\eps\},\quad \eps>0.$$
Since $u$ is assumed only continuous (and not $C^1$) up to $x_n=0$, we will argue on $\Omega_\eps$.
For the sake of passing to the limit $\eps\to 0$, we will make use of the Bernstein estimate in \eqref{grabound}. 
Namely, since $p>2$, for given $t_0<t_1<0$, the latter guarantees that
\be{BernSZ}
|\nabla u(x,t)|\le C(M_0+1)x_n^{-1},\quad x\in B'_3\times(0,3),\ t_0<t<t_1, 
\ee 
where $M_0:=\underset{\Gamma_4\times(t_0-1,t_1)}\sup |u|<\infty.$ 
We also fix a cylindrical cut-off function $\varphi$ such that
\be{defvarphi}
\begin{aligned}
&\varphi(x):=\psi(x')\chi(x_n),\quad\hbox{where $\chi (s)=(3-s)s^\alpha$ and}\\
 &\psi\in C^2(B_3'),\ 0\le\psi\le 1,\ |\nabla \psi|\le \psi^{1/p},
 \ \psi\equiv 1\mbox{ in $B_2'$},\ \psi=0 \mbox{ on $\partial B'_3$}.
 \end{aligned}
 \ee
With this cut-off,  we shall use the following properties, 
 including the weighted Poincar\'e type inequality \eqref{euphi2}.
 
 \begin{lem}\label{genlem}
For all $w\in W^{1,p}(\Omega_\eps)\cap C(\overline{\Omega_\eps})$, we have
\be{euphi2a}
\Big|\int_{\Omega_\eps}\nabla w\nabla \vap \,dx\Big|\le \frac12 \int_{\Omega_\eps}|\nabla w|^p\vap \,dx +C
\ee
and
\be{euphi2}
\int_{\Omega_\eps} |w|^p \psi(x') \,dx\le C\Bigg(\int_{\Omega_\eps}|\nabla w|^p\vap \,dx+\int_{B'_3} |w(x',\eps)|^p \psi(x')\,dx'\Bigg).
\ee
\end{lem}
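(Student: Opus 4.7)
The plan is to treat the two estimates separately, exploiting throughout the explicit form of the weight $\varphi(x)=\psi(x')\chi(x_n)$ with $\chi(s)=(3-s)s^\alpha$ and the two-sided constraint $\alpha\in(1,p-1)$ on the parameter.

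For \eqref{euphi2a}, I would split
$$\nabla w\cdot\nabla\varphi=\chi(x_n)\,\nabla_{x'}w\cdot\nabla_{x'}\psi+\psi(x')\,\chi'(x_n)\,\partial_{x_n}w$$
and apply Young's inequality with exponents $p$ and $p'=p/(p-1)$ to each piece, distributing the factor $\frac12$ into two contributions of $\frac14$. For the tangential term, the assumption $|\nabla\psi|\le\psi^{1/p}$ yields
$$\chi\,|\nabla_{x'}w|\,|\nabla\psi|\le\tfrac14|\nabla w|^p\varphi+C\,\chi\,\psi^{-p'/p}|\nabla\psi|^{p'}\le\tfrac14|\nabla w|^p\varphi+C\chi,$$
which integrates to a constant since $\chi$ is bounded on $[0,3]$. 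For the normal term, Young produces
$$\psi|\chi'||\partial_{x_n}w|\le\tfrac14|\nabla w|^p\varphi+C\,\psi\,|\chi'|^{p'}\chi^{-1/(p-1)}.$$
A short computation using $\chi(s)\sim s^\alpha$ and $\chi'(s)\sim\alpha s^{\alpha-1}$ near $0$ shows that the singular factor behaves like $s^{\alpha-p'}$, which is integrable on $(0,3)$ precisely when $\alpha>p'-1=1/(p-1)=\beta$. Since $p>2$ gives $\beta<1<\alpha$, the integral is bounded, and summing the two contributions yields \eqref{euphi2a}.

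For \eqref{euphi2}, the strategy is a one-dimensional weighted fundamental-theorem-of-calculus argument in the $x_n$ variable. For fixed $x'\in B'_3$ and $x_n\in(\eps,3)$, I would write
$$w(x',x_n)=w(x',\eps)+\int_\eps^{x_n}\partial_s w(x',s)\,ds$$
and bound the integral via H\"older's inequality with the splitting $|\partial_s w|=|\partial_s w|\chi(s)^{1/p}\cdot\chi(s)^{-1/p}$:
$$\int_\eps^{x_n}|\partial_s w|\,ds\le\Bigl(\int_\eps^{3}|\partial_s w|^p\chi\,ds\Bigr)^{1/p}\Bigl(\int_0^3\chi(s)^{-1/(p-1)}\,ds\Bigr)^{1/p'}.$$
The weight integral is finite because $\chi(s)^{-1/(p-1)}\sim s^{-\alpha/(p-1)}$ near $0$ and the upper bound $\alpha<p-1$ guarantees $\alpha/(p-1)<1$. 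Raising to the power $p$ via $(a+b)^p\le 2^{p-1}(a^p+b^p)$, multiplying by $\psi(x')$ and integrating over $\Omega_\eps$, a Fubini exchange converts the resulting double integral into $\int_{\Omega_\eps}|\partial_{x_n}w|^p\varphi\,dx$ (up to a factor bounded by $3-\eps$), which gives \eqref{euphi2}.

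The main technical point to emphasize is that the admissible range $\alpha\in(1,p-1)$ is not arbitrary but is forced by exactly these two integrability requirements working together: the upper bound $\alpha<p-1$ makes $\chi^{-1/(p-1)}$ integrable near the boundary, which is needed for the Poincar\'e-type bound \eqref{euphi2}, while the lower bound $\alpha>1>\beta$ ensures that the singular factor $s^{\alpha-p'}$ appearing after Young in \eqref{euphi2a} is integrable. The nonemptiness of this window for all $p>2$ is precisely what makes the weighted framework of this subsection consistent.
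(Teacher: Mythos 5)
Your argument reproduces the paper's proof essentially verbatim: the same Young/H\"older splitting against the weight $\varphi=\psi\chi$ with the singular factors $\chi^{-\beta}$ and $\chi^{-1}|\chi'|^p$, the same identification of the two integrability thresholds $\alpha>\beta$ (for \eqref{euphi2a}) and $\alpha<p-1$ (for \eqref{euphi2}), and the same fundamental-theorem-of-calculus step in $x_n$ followed by multiplication by $\psi$ and integration. The only omission, relative to the paper, is that you do not mention the companion singularity $(3-s)^{-\beta}$ at the endpoint $s=3$, whose integrability uses $\beta<1$ (equivalently $p>2$), and your closing remark that $\alpha>1$ is ``forced'' by \eqref{euphi2a} slightly overstates what the lemma needs ($\alpha>\beta$ suffices there; $\alpha>1$ is used only later for the vanishing of the boundary flux term $I_\eps$), but neither point affects correctness.
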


\begin{proof}[Proof of Lemma~\ref{genlem}]
(i) By Young's inequality we have 
$$
\begin{aligned}
\Big|\int_{\Omega_\eps}\nabla w\nabla \vap \,dx\Big|
&= \Big|\int_{\Omega_\eps} (\vap^{\frac1p}\nabla w)\cdot(\vap^{-\frac1p}\nabla \vap) \,dx\Big|\\
&\le \frac12 \int_{\Omega_\eps}|\nabla w|^p\vap \,dx +C\int_{\Omega_\eps}\Big(\vap^{-1}|\nabla \vap|^p\Big)^\beta \,dx=:J_1+J_2.
\end{aligned}
$$
To control $J_2$, we note that
$$\vap^{-1}|\nabla \vap|^p\le \chi^{p-1}\psi^{-1}|\nabla \psi|^p+\psi^{p-1}\chi^{-1}|\chi'|^p\le C+\psi^{p-1}\chi^{-1}|\chi'|^p$$
and, since $|\chi'|\le Cx_n^{\alpha-1}$, we get
$$
\big(\chi^{-1}|\chi'|^p\big)^\beta
\le C\big[x_n^{(\alpha-1)p-\alpha}(3-x_n)^{-1}\big]^\beta
=Cx_n^{\alpha-\beta-1}(3-x_n)^{-\beta}\in L^1(0,3)
$$
owing to $\alpha>\beta$ and $\beta\in(0,1)$. 
Consequently, $J_2\le C$ (independent of $\eps$), hence \eqref{euphi2a}.
\smallskip

(ii) By H\"{o}lder's inequality, we have 
$$\begin{aligned}
\int_\eps^3|\nabla w(x',s)|ds
&=\int_\eps^3 (|\nabla w(x',s)|\chi^{1/p})\chi^{-1/p}ds\\
&\le \Big(\int_\eps^3|\nabla w(x',s)|^p\chi(s)ds\Big)^{\frac1p}\Big(\int_\eps^3\chi^{-\beta} (s)ds\Big)^{\frac{p-1}{p}}\\
&\le C\Big(\int_\eps^3|\nabla w(x',s)|^p\chi(s)ds\Big)^{\frac1p}
\end{aligned}$$
where we used $\chi^{-\beta}=s^{-\alpha\beta}(3-s)^{-\beta}\in L^1(0,3)$ owing to $\alpha<p-1$ and $p>2$.
It follows that
$$\begin{aligned}
|w(x',x_n)|^p
 &=\Big|w(x',\eps)+\int_\eps^{x_n} w_{x_n}(x',s)ds\Big|^p\\
 &\le 2^{p-1}|w(x',\eps)|^p+C\int_\eps^3|\nabla w(x',s)|^p\chi(s)ds.
\end{aligned}$$
Multiplying by $\psi(x')$ and integrating over $B'_3$, we obtain
 $$\int_{B'_3}|w(x',x_n)|^p\psi(x')\,dx'
\le C\int_{B'_3}|w(x',\eps)|^p\psi(x')\,dx'+C\int_{\Omega_\eps}|\nabla w|^p\vap \,dx.$$
Finally, integrating in $x_n$ over $(\eps,3)$ 
 implies \eqref{euphi2}. 
\end{proof}

\begin{proof}[Proof of Proposition \ref{1L3}]
{\bf Step 1.} {\it Upper bound for $\int_\Omega u(t)\vap \,dx$.}
We claim that
\be{upint}
\int_\Omega u(t)\vap \,dx\le C\big(1+|t|^{-\beta}\big),\quad t<0.
\ee

Let $\eps\in(0,1)$ and set $y_\eps(t)=\int_{\Omega_\eps} u(t)\vap \,dx$. Multiplying the PDE in \eqref{eqE1} by $\varphi$, integrating by parts over $\Omega_\eps$
and using $\varphi=0$ on $\{x_n>\eps\}\cap\partial\Omega_\eps$, we get
$$\begin{aligned}
y'_\eps(t)
&= \int_{\Omega_\eps}|\nabla u|^p\vap \,dx+\int_{\Omega_\eps} \vap\Delta u \,dx\\
& =\int_{\Omega_\eps} |\nabla u|^p\vap \,dx-\int_{\Omega_\eps}\nabla u\nabla \vap \,dx+\int_{\{x_n=\eps\}}  \vap u_\nu \,dx'.
 \end{aligned}$$
 Next set 
 \be{defIJeps}
 I_\eps(t):=\int_{B'_3} \vap |u_\nu(x',\eps,t)| \,dx',\quad J_\eps(t):=C\int_{B'_3} |u(x',\eps,t)|^p \psi(x')\,dx'.
 \ee
 Using \eqref{euphi2a},  \eqref{euphi2}, $\varphi\le C\psi$ and H\"older's inequality, we obtain
$$\begin{aligned}
y'_\eps(t)\
&\ge \frac12 \int_{\Omega_\eps} |\nabla u|^p\vap \,dx-I_\eps -C\\
 &\ge C_1\int_{\Omega_\eps} |u|^p \vap \,dx-I_\eps-J_\eps-C
  \ge C_1 |y_\eps|^p -I_\eps-J_\eps-C.
 \end{aligned}$$
Fix $t_0<t_1<0$. Using \eqref{BernSZ} and $0\le \varphi\le Cx_n^\alpha$, we deduce that
$|I_\eps| \le C(M_0+1) \eps^{\alpha-1}$ for all $t\in(t_0,t_1)$.
This along with the assumption $u\in C(\overline{\R^n_+}\times(-\infty,0))$ and the boundary conditions
guarantees the existence of $\eps_0\in(0,1)$ (depending on $u,t_0,t_1$) such that 
\be{boundIeps}
I_\eps(t)+J_\eps(t)\le 1\quad\hbox{ for all $t\in(t_0,t_1)$ and $\eps\in(0,\eps_0)$.}
\ee
Consequently,
$$y'_\eps(t)\ge  C_1 |y_\eps|^p -C,\quad t_0<t<t_1,\ \eps\in(0,\eps_0).$$
It follows (cf.~Lemma~\ref{blowupineq}(i)) that
 $$y_\eps(t)\le C_2\big[1+(t_1-t)^{-\beta}\big],\quad t_0<t<t_1,\ \eps\in(0,\eps_0).$$
 Letting $\eps\to 0$ and then $t_1\to 0$, $t_0\to-\infty$, this implies claim \eqref{upint}.

\smallskip

 {\bf Step 2.} {\it Bound for $\int_\Omega u_-(t)\vap \,dx$.}
We claim that
\be{lowint}
\int_\Omega u_-(t)\vap \,dx\le C,\quad t<0.
\ee

 The function  $v=-u$ is a classical solution of the corresponding problem with absorbing gradient term:
 \be{eqHJabs}
 \begin{cases}
	v_t-\Delta v+|\nabla v|^p=0,&(x,t)\in \R^n_+\times (-\infty,0),\\
	\noalign{\vskip 1mm}
	v=0,&(x,t)\in \partial\R^n_+\times (-\infty,0).\\
	\end{cases}
\ee
Let $\eps\in(0,1)$ and set $z_\eps(t):=\int_{\Omega_\eps} v_+(x,t)\vap(x) \,dx$.
By the parabolic Kato inequality (cf.~Lemma~\ref{pKi}), applied over $\Omega_\eps$
with test-function $\vap$, and using $\varphi=0$ on $\{x_n>\eps\}\cap\partial\Omega_\eps$, we have 
\begin{align*}
z_\eps'(t)\le - \int_{\Omega_\eps}\vap|\nabla v_+|^p\,dx-\int_{\Omega_\eps} \nabla v_+ \nabla\vap \,dx+\int_{\{x_n=\eps\}}  \chi_{\{v>0\}}v_\nu\vap \,d\sigma.
 \end{align*}
Recalling the notation \eqref{defIJeps} and using \eqref{euphi2a},  \eqref{euphi2} and $\varphi\le C\psi$, we obtain 
$$\begin{aligned}
 z_\eps'(t)
 &\le -\frac12 \int_{\Omega_\eps} |\nabla v_+|^p\vap \,dx+C+I_\eps \\
& \le -C_1\int_{\Omega_\eps} v_+^p \vap \,dx+C+I_\eps+J_\eps
  \le -C_1 z_\eps^p+C+I_\eps+J_\eps.
   \end{aligned}$$
 Fix $t_0<t_1<0$. By \eqref{boundIeps}, it follows that
 $$z'_\eps(t)\le  -C_1 z_\eps^p + C,\quad t_0<t<t_1,\ \eps\in(0,\eps_0),$$
with $\eps_0\in(0,1)$ depending on $u,t_0,t_1$.
It follows (cf.~Lemma~\ref{blowupineq}(ii)) that
 $$z_\eps(t)\le C_2\big[1+(t-t_0)^{-\beta}\big],\quad t_0<t<t_1,\ \eps\in(0,\eps_0).$$
 Letting $\eps\to 0$ and then $t_0\to-\infty$, $t_1\to 0$, this implies claim \eqref{lowint}.
 
 \smallskip

 {\bf Step 3.} {\it Conclusion.}
Writing $u_+=u+u_-$, inequalities \eqref{upint}  and \eqref{lowint}  
 imply $\int_\Omega u_+(t)\vap \,dx\le C\big(1+|t|^{-\beta}\big)$, hence
 \eqref{1integralcontrol} and \eqref{vap3}. 
 \end{proof}

\subsection{Pointwise estimates and proof of Theorem~\ref{thmancient3}(ii)} 
 
 We shall here prove Theorem~\ref{thmancient3}(ii) along with the following lower pointwise estimate of $u$,
 which is a preliminary step to the proof of positivity  (Theorem~\ref{thmancient3}(i)).
 In this subsection $C, C_i$ denote generic positive constants depending only on $n,p$.

  \begin{lem}\label{1L3b}
  Let $p>2$ and let $u$ be an ancient solution of \eqref{eqE1}. 
Then
 \be{vap3b}
u(x,t) \ge -Cx_n^{1-\beta}\quad\hbox{in $Q$}.
\ee
 \end{lem}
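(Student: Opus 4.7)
The plan is to convert the integral bound \eqref{vap3} on $u_-$ into a pointwise lower bound, using the Li--Yau estimate \eqref{controlul} as the bridge.

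\textbf{Scaling reduction.} The map $v(y,\tau):=\eta^{\beta-1}u(\eta y,\eta^2\tau)$ preserves the class of ancient solutions of \eqref{eqE1} and transforms the target inequality $u\ge -Cx_n^{1-\beta}$ into itself. Hence, taking $\eta=x_{0,n}$, it suffices to prove $u(x_0,s_0)\ge -C$ whenever $x_{0,n}=1$, with a constant $C$ depending only on $n,p,\alpha$; the general case at scale $\eta$ then follows by undoing the rescaling.

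\textbf{Li--Yau bound averaged on a well-chosen cylinder.} Fix such $(x_0,s_0)$. Since $B_1(x_0)\subset\R^n_+$, $u$ is a classical solution of \eqref{eqE0} on $B_1(x_0)\times(-\infty,0)$. Applying \eqref{controlul} in $B_1(x_0)\times(t_*,s_0)$ and letting $t_*\to-\infty$ kills the $R^{1-\beta}(t-t_*)^{-1}$ term and yields
$$
u(x,t)\le u(x_0,s_0)+C\Bigl(\frac{|x-x_0|^p}{s_0-t}\Bigr)^{\beta}+C(s_0-t)
$$
for $x\in B_{1/2}(x_0)$ and $t<s_0$. Integrate this inequality over the cylinder $A:=B_{1/2}(x_0)\times[s_0-1,s_0-1/2]$ and divide by $|A|$. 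Since $|x-x_0|\le 1/2$ and $1/2\le s_0-t\le 1$ on $A$, both remainder terms are bounded by a universal constant; using $u\ge -u_-$ we get
$$
u(x_0,s_0)\ge -\frac{1}{|A|}\int_A u_-(x,t)\,dx\,dt-C.
$$
Next, $x_{0,n}=1$ implies $B_{1/2}(x_0)\subset B_2'(x_0')\times(1/2,3/2)$ with $x_n\ge 1/2$ on $B_{1/2}(x_0)$, so \eqref{vap3} yields, for every $t<0$,
$$
\int_{B_{1/2}(x_0)}u_-(x,t)\,dx\le 2^\alpha\int_{B_2'(x_0')\times(0,2)}u_-(x,t)\,x_n^\alpha\,dx\le C.
$$
Integrating in $t$ over $[s_0-1,s_0-1/2]$ and dividing by the fixed positive number $|A|$, we conclude $|A|^{-1}\int_A u_-\le C$. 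Combining with the previous bound gives $u(x_0,s_0)\ge -C$, and the scaling reduction then gives \eqref{vap3b}.

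\textbf{Main obstacle.} The delicate point is matching scales: the averaging cylinder $A$ must sit inside $\R^n_+$ so that the Li--Yau estimate applies, while its space and time dimensions must be tuned so that both Li--Yau remainder terms and the $L^1$-norm of $u_-$ on $A$ stay uniformly bounded at the unit scale $x_{0,n}=1$. The identity $(p-2)\beta=1-\beta$ is what forces the exponent $1-\beta$ in \eqref{vap3b} and makes the scaling reduction compatible with the target bound.
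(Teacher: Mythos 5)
Your proof is correct and takes essentially the same route as the paper: reduce by parabolic scaling to the unit scale $x_{0,n}=1$, apply the Li--Yau estimate \eqref{controlul} (with the $t^{-1}$ term removed by pushing the starting time to $-\infty$, which is legitimate for ancient solutions), and feed in the integral bound \eqref{vap3} on $u_-$ from Proposition~\ref{1L3}. The only cosmetic difference is that you average the Li--Yau inequality over a space-time cylinder, while the paper evaluates it at a single earlier time slice $t-1$ and takes a supremum over a ball, but the same three ingredients (Li--Yau, the weighted $L^1$ bound on $u_-$, and scale invariance) carry both arguments.
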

 
 \begin{proof}[Proof of Theorem~\ref{thmancient3}(ii) and Lemma~\ref{1L3b}]

 {\bf Step 1.}{\hskip 1mm}{\it Upper estimate for $x_n=1$.} 
 Fix $t<0$, $x'\in \R^{n-1}$, $Z=(x',1)$ and set $\hat{B}=B_{1/2}(Z)$.
 Pick any $s\in (t,0)$ and set $\tau=s-t$.
By Theorem~\ref{thmLY}(ii) and a shift in time and space, we have 
 $$u(Z,t)\le u(y,s)+C\big(\tau^{-\beta}+\tau\big),\quad y\in \hat{B}.$$
Using \eqref{1integralcontrol} in Proposition \ref{1L3}, we then obtain
$$\begin{aligned}
  u(Z,t)
  &\le \inf_{y\in \hat{B}} u(y,s)+C\big(\tau^{-\beta}+\tau\big)\\
& \le C\Big(\|u_+(s)\|_{L^1(\hat{B})}+\tau^{-\beta}+\tau\Big)
  \le C\big(|s|^{-\beta}+\tau^{-\beta}+\tau\big)
   \end{aligned}$$
  Choose $s=t/2$, hence $\tau=|t|/2$, if $t\in[-1,0)$ and $s=t+1$, hence $\tau=1$, if $t<-1$.
Taking supremum over $x'\in\R^{n-1}$, we then obtain
 \be{1l11}
u(x',1,t)\le C\big(1+|t|^{-\beta}\big),\quad x'\in\R^{n-1},\ t<0.
 \ee
 
 \smallskip

 {\bf Step 2.} {\it Lower estimate for $x_n=1$.}
With the notation of Step 1, by Theorem~\ref{thmLY}(ii) and a shift in time and space, we have 
 $$u(Z,t)\ge u(y,t-1)-C,\quad y\in \hat{B}.$$
Using \eqref{vap3} in Proposition \ref{1L3}, we then obtain
 $$u(Z,t)\ge \sup_{y\in \hat{B}} u(y,t-1)-C \ge -C\Big(\|u_-(t-1)\|_{L^1(\hat{B})}+C\Big)\ge -C.$$
hence
 \be{1l11b}
u(x',1,t)\ge -C,\quad x'\in\R^{n-1},\ t<0.
 \ee
 
 \smallskip

 {\bf Step 3.} {\it Scaling argument.}
For given $\lambda>0$, set $u_\lambda(x,t)=\lambda^{\beta-1}u(\lambda x , \lambda^2 t)$.
Since $u_\lambda$ is also an ancient solution of \eqref{eqE1} and $C$ in \eqref{1l11}, \eqref{1l11b} depends only $n,p$,
we may apply these estimates to $u_\lambda$, which yields
$$ \lambda^{\beta-1}u(x',\lambda, t)=u_\lambda(\lambda^{-1}x',1,\lambda^{-2}t)\in \big[-C, \,C(1+\lambda^{2\beta} |t|^{-\beta})\big],$$
for all $x'\in\R^{n-1}$, $ t<0$, $\lambda>0$.
Taking $\lambda=x_n$, we obtain \eqref{controludsqa} and \eqref{vap3b}.

\smallskip
 {\bf Step 4.} {\it Proof of the gradient estimate \eqref{controludsq1a}.}
Fix $(x,t)\in Q$ and let $R=x_n/2$.
Pick any $t_0\in(-\infty,t)$ and set $M(t_0)=\underset{B_R(x)\times[t_0,t]} \sup |u|$. 
 Applying \eqref{grabound} to the solution $v(x,s)=u(x,s+t_0)$, defined in $B_R(x)\times(0,t-t_0]$, at $s=t-t_0$,
we have
$$\begin{aligned}
|\nabla u(x,t)|
&\le C\Big(\frac{M(t_0)}{R}+\Big(\frac{M(t_0)}{R^2}\Big)^{1/p}+\Big(\frac{M(t_0)}{t-t_0}\Big)^{1/p}\Big)\\
&\le C\Big(\frac{M(t_0)}{x_n}+\Big(\frac{M(t_0)}{x_n^2}\Big)^{1/p}+\Big(\frac{M(t_0)}{t-t_0}\Big)^{1/p}\Big)\\
&\le C\Big(\frac{M(t_0)}{x_n}+x_n^{-\beta}+\Big(\frac{M(t_0)}{t-t_0}\Big)^{1/p}\Big),
\end{aligned}$$
where we used Young's inequality in the last estimate.
Since, by \eqref{controludsqa} and \eqref{vap3b}, $M(t_0)\le C\big(x_n^{1-\beta}+x_n^{1+\beta}|t|^{-\beta}\big)$,
 we deduce that
 $$
|\nabla u(x,t)|\le C\left(x_n^{-\beta}+x_n^{\beta}|t|^{-\beta}+ \big(x_n^{1-\beta}+x_n^{1+\beta}|t|^{-\beta}\big)^{1/p} (t-t_0)^{-1/p}\right)
$$
and \eqref{controludsq1a} follows by letting $t_0\to-\infty$.
 \end{proof}
 
 \vspace{2pt}
 
\subsection{Proof of   Theorem~\ref{thmancient3}(i) for $p>2$}
\label{foward0}

{\bf Step 1.} {\it Preliminaries.} 
Let $u$ be any ancient solution of \eqref{eqE1}. 
The function $v:=-u$ is a solution of \eqref{eqHJabs}
and our goal is to show that~$v\le 0$.
From Lemma~\ref{1L3b}, there exists a (universal) $C_0=C_0(n,p)>0$ such that
\be{defC0}
v\le C_0x_n^{1-\beta}\quad\hbox{in } Q.
\ee
Starting from this a priori estimate, the proof will be based on comparison with a one-dimensional, forward self-similar solution $W$,
with suitable behavior at space infinity.
Specifically, we need $W$ to grow faster than \eqref{defC0} as $x_n\to\infty$ for fixed $t$,
whereas $W$ will decay as $t\to\infty$ for fixed $x$.
We are thus are looking for a special solution of the form
$$W(s,t):=(t+1)^\gamma\phi(y),\quad y=\frac{s}{\sqrt{t+1}},$$
where $\gamma=\frac{p-2}{2(p-1)}\in (0,1/2)$.
By direct computation, such $W$ is a classical solution of
 \be{eqHJabs2}
 \begin{cases}
W_t-W_{ss}+|W_s|^p=0,&s\in (0,\infty),\ t\in (0,\infty),\\
	\noalign{\vskip 1mm}
	W(0,t)=0,&t\in (0,\infty)
	\end{cases}
\ee
provided the profile $\phi\in C^2([0,\infty))$ solves the ODE:
\begin{eqnarray}
\phi''&=&\gamma \phi-\ts\frac12 y\phi'+|\phi'|^p,\qquad y>0, \label{equphi1}\\
 \phi(0)&=&0. \label{equphi1b}
\end{eqnarray}
For problem \eqref{equphi1}-\eqref{equphi1b}, we have the following result, whose proof is postponed to section~\ref{SecFwd}.

\begin{prop}\label{positivity1}
 Let $p>2$.
There exists $\alpha>0$ such that the solution $\phi\in C^2([0,\infty))$ of \eqref{equphi1}-\eqref{equphi1b}
is global positive increasing and satisfies
 \be{asymptphi}
 \phi(y)\ge \alpha y,\quad \hbox{for all } y>0.
 \ee
\end{prop}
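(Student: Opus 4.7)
The plan is a shooting argument on the initial slope $\alpha = \phi'(0) > 0$. Denote by $\phi_\alpha$ the unique maximal $C^2$ solution of \eqref{equphi1}-\eqref{equphi1b} with $\phi'_\alpha(0) = \alpha$, defined on $[0, Y_\alpha)$. Since $\phi''_\alpha(0) = \alpha^p > 0$, the functions $\phi_\alpha$ and $\phi'_\alpha$ are positive near $0$. I would first show positivity throughout $(0, Y_\alpha)$: at a putative first positive zero $y_1$ of $\phi'_\alpha$, we would have $\phi_\alpha(y_1) > 0$ (since $\phi_\alpha$ was increasing before) and \eqref{equphi1} would yield $\phi''_\alpha(y_1) = \gamma \phi_\alpha(y_1) > 0$, contradicting the descent of $\phi'_\alpha$ to zero at $y_1$.

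Next, differentiating \eqref{equphi1} gives
\[
\phi'''_\alpha = (\gamma - \tfrac12)\phi'_\alpha + \bigl(p|\phi'_\alpha|^{p-2}\phi'_\alpha - \tfrac{y}{2}\bigr)\phi''_\alpha.
\]
At any interior critical point $y^*$ of $\phi'_\alpha$, this reduces to $\phi'''_\alpha(y^*) = (\gamma - \tfrac12)\phi'_\alpha(y^*) < 0$, since $\gamma = \frac{p-2}{2(p-1)} < \tfrac12$ and $\phi'_\alpha > 0$. Therefore $\phi'_\alpha$ admits no interior local minimum, and combined with $\phi''_\alpha(0) > 0$, it must either be monotone increasing on $[0, Y_\alpha)$, or reach a unique maximum and monotonically decrease to a nonnegative limit.

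To locate the right initial slope, I would partition $(0, \infty) = A \cup B \cup C$ with $A := \{\alpha : Y_\alpha < \infty\}$ (finite-$y$ blow-up), $B := \{\alpha : Y_\alpha = \infty \text{ and } \phi_\alpha(y)/y \to 0\}$ (sublinear growth), and $C$ the complement. Continuous dependence on initial data makes $A$ and $B$ open. Non-emptiness of $A$ follows for large $\alpha$ by ODE comparison: near $y = 0$, $|\phi'_\alpha|^p$ dominates, forcing $\phi'_\alpha$ to blow up at $y \lesssim \alpha^{1-p}$. Non-emptiness of $B$ follows for small $\alpha$ by perturbation of the linear ODE $\tilde\phi'' = \gamma\tilde\phi - \frac{y}{2}\tilde\phi'$, whose solutions grow at most like $y^{1-\beta}$. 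By connectedness, $C$ is non-empty; for any $\alpha^* \in C$, the corresponding $\phi := \phi_{\alpha^*}$ is global, positive, and strictly increasing.

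To extract the linear lower bound, I would study $w(y) := \phi(y)/y$, which is continuous on $(0,\infty)$ with $w(0^+) = \alpha^*$. Using the dichotomy for $\phi'$: either $\phi'$ is monotone increasing, giving $\phi'(y) \geq \alpha^*$ and hence $\phi(y) \geq \alpha^* y$ directly; or $\phi'$ decreases to a limit $\ell$, in which case $\phi(y)/y \to \ell$, and $\ell > 0$ since $\alpha^* \notin B$. In either case, $w$ extends continuously to $[0, \infty]$ as a positive function and therefore attains a positive minimum $\alpha > 0$, yielding $\phi(y) \geq \alpha y$. The main obstacle is the shooting step: rigorously establishing the openness of $A$ and $B$, and especially their non-emptiness, requires uniform ODE estimates as $\alpha \to 0^+$ and $\alpha \to \infty$, together with careful sub/supersolution analysis tracking the competition between the superlinear term $|\phi'|^p$ and the absorbing drift $-\tfrac{y}{2}\phi'$, whose relative dominance switches at $|\phi'|^{p-1} = y/2$.
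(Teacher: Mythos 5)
Your overall strategy (shooting on the initial slope $\alpha=\phi'(0)$, a trichotomy of the parameter range driven by openness/connectedness) mirrors the paper's, and your preliminary observations -- positivity of $\phi,\phi'$, the ODE \eqref{phi3} for $\phi'''$, the fact that $\phi'$ has no interior local minimum -- are exactly the paper's Lemmas~\ref{lemJ1} and \eqref{phipos}. However, there is a genuine gap in how you define the set $B$ and claim its openness. You set $B=\{\alpha:\,Y_\alpha=\infty,\ \phi_\alpha(y)/y\to 0\}$, a condition that is \emph{asymptotic} in $y$, and you assert without justification that continuous dependence on initial data makes $B$ open. Continuous dependence gives closeness of solutions on compact $y$-intervals, which does not directly control the limit $\phi_\alpha(y)/y$ as $y\to\infty$. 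The paper sidesteps this delicacy entirely by defining the lower set $J_1=\{\alpha:\ \exists y:\ \phi''(y)<0\}$ via a strict \emph{pointwise} inequality, whose openness is immediate, and then shows (Lemma~\ref{lemJ1}) that once $\phi''<0$ it stays so and the solution is global.

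In fact your $B$ does coincide with $J_1$, but establishing that equivalence requires an extra argument you have not supplied: if $\phi'$ is eventually decreasing with limit $\ell>0$, then $\phi\sim\ell y$, so $\gamma\phi-\frac12 y\phi'\sim (\gamma-\frac12)\ell y\to-\infty$ and hence $\phi''\to-\infty$ from \eqref{equphi1}, forcing $\phi'\to-\infty$, a contradiction -- so the ``decreasing to $\ell>0$'' branch you invoke at the end never occurs. Without that argument, your openness claim for $B$ is not established and your dichotomy on $C$ carries an unresolved case. Your non-emptiness of $B$ for small $\alpha$ (``perturbation of the linear ODE'') is also only a sketch: showing that $|\phi'|^p$ stays subordinate for all future times when $\alpha$ is small requires the quantitative estimates that the paper's Lemma~\ref{lemJ2} works out explicitly (it locates a specific $y_1$ at which $\phi''<0$, rather than controlling an asymptotic limit). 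You do flag the shooting step as ``the main obstacle,'' which is honest and correctly identifies where the work lies, but the proposal as written leaves that core step open. Reformulating your trichotomy using the pointwise condition $\phi''(y)<0$, as in the paper, would close the gap.
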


\begin{remark} \rm
We shall show  in Section~\ref{SecFwd} that $\alpha$ is actually unique, and
additional information on problem \eqref{equphi1}-\eqref{equphi1b} will be given,
including precise asymptotic behavior of the solution in Proposition~\ref{positivity1}.
We refer to \cite{GK,FK} for related results on the ODE \eqref{equphi1}.
In particular \cite{GK} (resp., \cite{FK}) shows the existence of negative (resp., positive) solutions to \eqref{equphi1} with the
Neumann conditions $\phi'(0)=0$ and give information on the possible asymptotic behaviors of $\phi$.
These results thus provide the existence of positive and negative, forward self-similar solutions of \eqref{eqE0} on the whole real line.
\end{remark}

\smallskip

{\bf Step 2.} {\it Universal bound.} We claim that, there exists $K=K(n,p,\alpha)>0$ such that for  
\be{boundvK}
v(x,t)\le K\quad\hbox{in } Q.
\ee
Namely, for $C_0$ given by \eqref{defC0}, we will show \eqref{boundvK} with
\be{defKC0}
K=C_0\vee \sup_{s\ge 0} \big(C_0s^{1-\beta}-\alpha s\big).
\ee

  Fix $t_0<t_1:=-(\alpha/K)^{2(p-1)}$, 
   hence 
$$  B:=K^{p-1}\alpha^{1-p} (1-t_0)^{1/2}>1,$$
 and let $w(x,t):=W(x_n,t-t_0)+K$.
By \eqref{defC0}, \eqref{defKC0}, we have
\be{compwv1}
w(x,t)\ge C_0 \ge v(x,t) \quad\hbox{in } (\R^{n-1}\times[0,1])\times(t_0,0).
\ee
Next, we deduce from \eqref{asymptphi} and $\gamma=(1-\beta)/2$ that
\be{lowerw}
w(x,t)\ge \alpha (t-t_0+1)^{-\beta/2}x_n+K\quad\hbox{in }\R^n_+\times[t_0,\infty).
\ee
It follows from \eqref{defC0}  and \eqref{lowerw} with $t=t_0$ that
\be{compwv2}
w(x,t_0)\ge v(x,t_0) \quad\hbox{in } \R^n_+.
\ee
 Also, 
we have $\alpha (t-t_0+1)^{-\beta/2}s+K\ge  C_0s^{1-\beta}$ for all $s\ge B$
and $t\in(t_0,0)$.
This along with \eqref{defC0} and \eqref{lowerw} implies
\be{compwv3}
w(x,t)\ge v(x,t) \quad\hbox{in } (\R^{n-1}\times[B,\infty))\times(t_0,0).
\ee
Now set $Pw:=w_t-\Delta w+|\nabla w|^p$ and $D:=\R^{n-1}\times(1,B)\subset \R^n$. 
By \eqref{compwv1},  \eqref{compwv2} and  \eqref{compwv3} we have
$$
\begin{cases}
Pv=Pw=0,&x\in D,\ t\in(t_0,0),\\
v(x,t)\le w(x,t),&x\in \partial D,\ t\in(t_0,0),\\
v(x,t_0)\le w(x,t_0),&x\in D.\\
\end{cases}
$$
Moreover, $v,w,|\nabla w|$ are bounded in $D\times (t_0,0)$ owing to \eqref{defC0} and the form of $W$.
We may thus apply the comparison 
 principle for unbounded domains (cf.~\cite[Proposition~52.6]{QSb19}),  
which gives $v\le w$ in $D\times(t_0,0)$, hence in $\R^n_+\times(t_0,0)$ in view of \eqref{compwv1} and \eqref{compwv3}.
Let $M=\sup_{s\in[0,1]}\phi'(s)$ and recall $\gamma=(1-\beta)/2$. 
For fixed $(x,t)\in \R^n_+\times(-\infty,0)$ and all $t_0<\min(t_1,t-x_n^2+1)$,  
we then have
$$v(x,t)\le w(x,t)=K+(t-t_0+1)^\gamma \phi\Big(\frac{x_n}{\sqrt{t-t_0+1}}\Big)\le
K+M(t-t_0+1)^{-\beta/2}x_n.$$
Letting $t_0\to-\infty$, we obtain \eqref{boundvK}.

\smallskip

{\bf Step 3.} {\it Scaling argument and conclusion.} 
For given $\lambda>0$, set $u_\lambda(x,t)=\lambda^{\beta-1}u(\lambda x , \lambda^2 t)$.
Since $u_\lambda$ is also an ancient solution of \eqref{eqE1} and $K$ in \eqref{boundvK} depends only $n,p,\alpha$,
we may apply this estimate to $v_\lambda=-u_\lambda$. This yields
$$v(x,t)=\lambda^{1-\beta}v_\lambda(\lambda^{-1}x,\lambda^{-2}t)\le K\lambda^{1-\beta},\quad (x,t)\in Q,$$
and the conclusion follows upon letting $\lambda\to 0$.
\qed

 \subsection{Proof of  Theorem~\ref{thmancient3}(i) for $p=2$ and partial positivity result for $p<2$} 
 
 In this subsection, we shall prove Theorem~\ref{thmancient3}(i) for $p=2$, along with the following 
 partial positivity result for $p<2$. 

  \begin{prop} \label{thmancient3subquadr}
Let $p\in(1,2)$, $u$  be an ancient solution of \eqref{eqE1} and assume that, for each $\eps>0$,
\be{hypsubquad}
\inf_{\Gamma_R\times(-\infty,-\eps]} u=o(R)\quad\hbox{as $R\to\infty$}.
\ee
Then $u\ge 0$.
 \end{prop}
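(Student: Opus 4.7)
The plan is to reduce the nonlinear problem to a linear one, working with $v := -u$. This function lies in $C^{2,1}(Q) \cap C(\overline{Q})$ and satisfies the gradient-absorption equation
\[
v_t - \Delta v + |\nabla v|^p = 0 \text{ in } Q, \qquad v = 0 \text{ on } \partial \R^n_+ \times (-\infty, 0),
\]
together with $\sup_{\Gamma_R \times (-\infty, -\eps]} v = o(R)$ as $R \to \infty$ for each $\eps > 0$, by assumption \eqref{hypsubquad}. The goal is $v \le 0$. The crucial observation is that $|\nabla v|^p \ge 0$ forces $v_t - \Delta v \le 0$, so that $v$ is a classical \emph{subsolution of the linear heat equation} in $Q$. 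The proposition thus reduces to a Liouville-type property for the heat equation in a half-space with zero Dirichlet datum and sublinear growth; the nonlinearity, and therefore the specific range $p \in (1, 2)$, plays no further role, and in particular no scaling nor self-similar barrier is needed, in contrast with the $p > 2$ case treated in Section~\ref{foward0}.

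Given this reduction, I argue by comparison on horizontal slabs. Fix $\eps, \delta > 0$ and, invoking the growth hypothesis, choose $R_\delta > 0$ such that $v \le \delta R$ on $\Gamma_R \times (-\infty, -\eps]$ for every $R \ge R_\delta$. For such $R$ and any $T > \eps$, I introduce the explicit heat-equation barrier
\[
H_T(x, t) := \delta\, x_n + h_R(x_n, t + T)\quad\text{on } \Gamma_R \times [-T, -\eps],
\]
where $h_R(\cdot, \tau)$ is the unique bounded classical solution of the one-dimensional heat equation on $(0, R) \times (0, \infty)$ with $h_R(0, \cdot) = h_R(R, \cdot) = 0$ and $h_R(\cdot, 0) = \delta (R - \cdot)$. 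A direct check shows that $H_T$ solves the heat equation, vanishes on $\{x_n = 0\}$, equals $\delta R$ on $\{x_n = R\}$, and equals $\delta R$ at $t = -T$; by the choice of $R_\delta$, the inequality $v \le H_T$ therefore holds on the whole parabolic boundary of the cylinder $\Gamma_R \times (-T, -\eps)$. Since both $v$ and $H_T$ are bounded on this infinite slab, the Phragm\'en--Lindel\"of-type maximum principle for bounded subsolutions in parabolic cylinders with unbounded cross-section yields $v \le H_T$ in the interior.

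A Fourier expansion in $x_n$ gives the exponential estimate $0 \le h_R(x_n, \tau) \le C \delta R\, e^{-\pi^2 \tau/R^2}$ for $\tau \ge 0$, so sending $T \to \infty$ in $v \le H_T$ yields $v(x, t) \le \delta x_n$ on $\Gamma_R \times (-\infty, -\eps]$, for every $R \ge R_\delta$. For a fixed point $(x, t) \in \R^n_+ \times (-\infty, -\eps]$, choosing $R > \max(R_\delta, x_n)$ and then letting $\delta \to 0$ gives $v(x, t) \le 0$; since $\eps > 0$ was arbitrary, $v \le 0$ throughout $Q$, that is, $u \ge 0$.

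The main obstacle, and what dictates the shape of the proof, is the construction of a barrier that simultaneously (i) dominates $v$ on the \emph{entire} parabolic boundary of the slab, including the base $\{t = -T\}$ far back in the past, where only the crude bound $v \le \delta R$ is available, and (ii) collapses pointwise to an arbitrarily small linear profile once $T \to \infty$. The additive decomposition $H_T = \delta x_n + h_R(\cdot, t + T)$ achieves both: the stationary piece $\delta x_n$ handles the Dirichlet datum $\delta R$ on $\{x_n = R\}$, while the transient $h_R$ absorbs the excess $\delta(R - x_n)$ at $t = -T$ and decays exponentially as $\tau = t + T \to \infty$ by virtue of the Dirichlet spectral gap $\pi^2/R^2$ of the Laplacian on $(0, R)$. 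The sublinear-growth hypothesis \eqref{hypsubquad} is essential at the final step, as it is precisely what allows the prefactor $\delta R$ in the barrier to be made arbitrarily small compared with $R$, and thus to send $\delta \to 0$.
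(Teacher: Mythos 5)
Your proof is correct, and the key reduction is exactly the one in the paper: since $v=-u$ satisfies $v_t-\Delta v=-|\nabla v|^p\le 0$, it is a classical subsolution of the heat equation (equivalently, $u$ is a supersolution), and the problem becomes a purely linear comparison question; the paper's proof of this proposition also does not use $p\in(1,2)$ beyond this. Where you genuinely differ is in the choice of barrier and the geometry of the comparison domain. The paper compares $u$ with $\tilde U(x,t)=m_R\,U_R(x-b,t+R^2-t_0)$, where $U_R$ is a caloric function on a spherical annulus whose inner ball is tangent to $\partial\R^n_+$ from the outside, and where the key input is the heat-kernel gradient bound $|\nabla U_1(y,s)|\le Cs^{-1/2}$; this keeps all comparison domains \emph{bounded}, so only the standard maximum principle is needed. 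You instead work on the infinite slab $\Gamma_R\times[-T,-\eps]$ with the upper barrier $\delta x_n+h_R(x_n,t+T)$ built from the explicit one-dimensional Dirichlet heat semigroup on $(0,R)$, exploiting its spectral gap to kill the transient as $T\to\infty$; this is more elementary and explicit but requires a Phragm\'en--Lindel\"of argument on a cylinder with unbounded cross-section. That step is fine, but your justification should be stated as ``$v-H_T$ is bounded above'' rather than ``$v$ and $H_T$ are bounded'': the hypothesis \eqref{hypsubquad} only controls $v=-u$ from \emph{above}, and no lower bound on $v$ is available (nor needed, since the comparison principle for subsolutions in slabs requires only one-sided boundedness of the difference, via the auxiliary supersolution $M\rho^{-2}\bigl(|x'-x_0'|^2+2(n-1)(t+T)\bigr)$ on truncated cylinders). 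With that rewording, the argument is complete and gives the same conclusion as the paper by a somewhat more hands-on route.
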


Their proof is completely different from the case $p>2$, based on comparison arguments which make essential use of the at most quadratic 
growth of the nonlinearity.
By the same line of arguments, we will obtain the following estimate of $u$ near the boundary,
 that will be used in the proof of Theorem~\ref{thmancient} for $p\le 2$.
  
 \begin{lem}\label{lemsubquadr}
Let $p\in(1,2]$, $u$ be an ancient solution of \eqref{eqE1}, $\eps>0$
and assume that
\be{defM1sub}
M_1:=\sup_{\,\Gamma_1\times(-\infty,-\eps]} |u|<\infty.
\ee
Then $|u|\le Cx_n$ in $\Gamma_1\times(-\infty,-\eps]$
for some $C=C(n,p,M_1)>0$.
 \end{lem}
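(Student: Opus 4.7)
The plan is to prove the upper and lower bounds on $u$ separately, in each case reducing the statement to a parabolic comparison for the plain heat equation on the slab $\Gamma_1$, which I would then complete by a Phragm\'en--Lindel\"of type argument that kills the effect of the ``initial'' data at $t=-\infty$.

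For the lower bound $u\ge -Cx_n$, I would work with $v:=-u$, which satisfies $v_t-\Delta v=-|\nabla v|^p\le 0$ and is therefore a bounded subsolution of the heat equation on $\Gamma_1\times(-\infty,-\eps]$. Taking the stationary heat solution $W(x_n):=M_1 x_n$, the function $V:=v-W$ is then a bounded heat subsolution with $V\le 0$ on the lateral boundary $\{x_n=0,1\}$. Since $u$ is itself a heat \emph{super}-solution, this direct argument is not available for the upper bound, so I would instead use a Hopf--Cole type substitution $U:=e^u-1$. A direct computation gives
\[
U_t-\Delta U \;=\; e^u\bigl(|\nabla u|^p-|\nabla u|^2\bigr),
\]
and since $p\le 2$ the elementary inequality $r^p\le r^2+1$ (valid for all $r\ge 0$) yields $U_t-\Delta U\le e^{M_1}=:\tilde C$. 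Taking the stationary function $\tilde U(x_n):=Ax_n-\tilde Cx_n^2/2$, which satisfies $\tilde U_t-\Delta\tilde U\equiv\tilde C$, and choosing $A=A(M_1)$ so that $\tilde U(1)=A-\tilde C/2\ge \sup U=e^{M_1}-1$, the difference $U-\tilde U$ becomes a bounded subsolution of the \emph{plain} heat equation on $\Gamma_1\times(-\infty,-\eps]$ that is $\le 0$ on $\{x_n=0,1\}$.

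The decisive remaining task is the following Phragm\'en--Lindel\"of statement: any bounded subsolution $\omega$ of the heat equation on $\Gamma_1\times(-\infty,-\eps]$ with $\omega\le 0$ on $\{x_n=0,1\}$ must be $\le 0$. To prove it I would fix $(x,t)$, apply the parabolic maximum principle on $\Gamma_1\times[T,t]$ to dominate $\omega$ by the heat solution $\hat\omega$ with the same lateral data and with initial data $\omega_+(\cdot,T)$, and then use the explicit form of the Dirichlet heat kernel on the slab, which factorizes as a Gaussian in $x'\in\R^{n-1}$ times the one-dimensional Dirichlet kernel on $(0,1)$; the latter decays like $e^{-\pi^2 s}\sin(\pi x_n)$ as $s\to\infty$. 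This yields $\hat\omega(x,t)\le C\|\omega\|_\infty\, e^{-\pi^2(t-T)}\,x_n\to 0$ as $T\to-\infty$, hence $\omega(x,t)\le 0$.

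Tracing back, this applied to $V$ gives $v\le M_1 x_n$, i.e.\ $u\ge -M_1 x_n$, while applied to $U-\tilde U$ it gives $U\le Ax_n$, and inverting the substitution gives $u=\log(1+U)\le\log(1+Ax_n)\le Ax_n$, which combined with the lower bound yields the claim. I expect the main technical obstacle to be precisely this Phragm\'en--Lindel\"of step: the underlying idea is classical but, because $\Gamma_1$ is unbounded in the $x'$-direction, one has to justify the heat-kernel representation on the slab (or equivalently combine an eigenfunction decomposition in $x_n$ with the Liouville property for bounded ancient solutions of the heat equation on $\R^{n-1}$) and check that the resulting bound is uniform in~$x'$.
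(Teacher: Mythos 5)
Your proof is correct, and it takes a genuinely different route from the paper's. Both proofs share the two key ideas: (a) for the lower bound, $v=-u$ is a subsolution of the heat equation, and (b) for the upper bound, the subquadratic hypothesis $p\le 2$ makes a Hopf--Cole type substitution of $u$ into a heat subsolution with at most a bounded source. The difference is in where and how the comparison is performed. The paper compares on a \emph{bounded} ``cap'' region $\Omega=\mathcal{A}\cap\R^n_+$, where $\mathcal{A}=\{1<|x-(z',-1)|<2\}$ is a fixed annulus tangent to the boundary at the target point, over a \emph{finite} time interval $(t_0-1,t_0]$, using the explicit barrier $KU_1$ built from the Dirichlet heat flow on the unit annulus; no maximum principle on unbounded domains is needed. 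For the upper bound the paper substitutes $v=e^{-t}(e^u-1)$ to push the source term down to zero, comparing again against the same annular barrier. You, by contrast, compare directly on the full slab $\Gamma_1\times(-\infty,-\eps]$, use the simple flat barriers $M_1x_n$ (lower bound) and $Ax_n-\tilde Cx_n^2/2$ (upper bound after the slightly different substitution $U=e^u-1$, which produces a bounded rather than vanishing source), and handle the unboundedness in space and time via a Phragm\'en--Lindel\"of step: the Dirichlet heat semigroup on $(0,1)$ has a spectral gap $\pi^2$, so the influence of the ``initial data'' at $t=T$ decays like $e^{-\pi^2(t-T)}\sin(\pi x_n)$, and sending $T\to-\infty$ kills it. You correctly identify that step as the only delicate point; it is classical and can be justified either via the product form $G_{\Gamma_1}=G_{\R^{n-1}}\otimes G_{(0,1)}$ of the slab heat kernel or, more elementarily, by direct comparison with $\|\omega\|_\infty W_1(x_n,t-T)$, where $W_1$ solves the one-dimensional heat equation on $(0,1)$ with zero lateral data and initial data $1$. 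What your approach buys: more natural barriers, no annulus construction, and a single geometric mechanism (the slab spectral gap) explaining why the ancient history drops out. What the paper's localization buys: everything lives on bounded domains over compact time windows, so the comparison principle is immediate and no Liouville/Phragm\'en--Lindel\"of input is required.
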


\begin{proof}[Proof of Theorem~\ref{thmancient3}(i) for $p=2$,
of Proposition~\ref{thmancient3subquadr} and of Lemma~\ref{lemsubquadr}.]

\phantom{a} 

{\bf Step~1.} {\it Auxiliary function.}
For $R>0$, consider the annulus $\mathcal{A}_R=\{x\in\R^n;\, \ts R<|x|<2R\}$
and let $U=U_R$ be the solution of 
\be{defUcomp}
\left.\qquad{\alignedat2
 U_t-\Delta U &= 0,    &\qquad& x\in \mathcal{A}_R,\ t>0\\
             U(x,t) &= 0,  &\qquad& |x|=R,\ t>0 \\
             U(x,t) &= 1,  &\qquad& |x|=2R,\ t>0 \\
             U(x,0) &= 1,  &\qquad& x\in \mathcal{A}_R. \\
  \endalignedat}\qquad\right\} 
\ee
By scaling we see that $U_R(x,t)=U_1(R^{-1}x,R^{-2}t)$ and, by
standard heat kernel estimates (see, e.g.,~\cite[Proposition~48.7*]{QSb19}),
we have $|\nabla U_1(y,s)|\le Cs^{-1/2}$, hence $|U_1(y,s)|\le s^{-1/2}(|y|-1)$ for $y\in \mathcal{A}_1$ and $s\in(0,1]$,
with $C=C(n)>0$.
Consequently, 
\be{boundUcomp}
0\le U_R(x,R^2)\le C(n)R^{-1}(|x|-R),\quad x\in \mathcal{A}_R.
\ee

{\bf Step 2.} {\it Proof of Proposition~\ref{thmancient3subquadr}.}
Assume $p\in(1,2)$ and \eqref{hypsubquad}. Fix $z=(z',z_n)\in\R^n_+$ and $t_0\le-\eps$.
For given $R>z_n$, set 
\be{defcomp1}
\left.\begin{aligned}
&b=(z',-R),\quad \mathcal{A}=\big\{x\in\R^n;\, \ts R<|x-b|<2R\big\},\\
\noalign{\vskip 2mm}
 &\Omega=\mathcal{A}\cap\R^n_+\subset \Gamma_R,\quad  J=(t_0-R^2,t_0],\quad D=\Omega\times J.\end{aligned}
\right\}
\ee
Note that the inner ball of $\mathcal{A}$ is tangent to $\partial\R^n_+$ from outside at the point $a=(z',0)$,
and that $\partial\Omega=S_1\cup S_2$, where
\be{defcomp2}
S_1=\big\{x\in\R^n;\, |x-b|\le 2R,\ x_n=0\big\},\ \, S_2= \big\{x\in\R^n;\, |x-b|=2R,\ x_n>0\big\}. 
\ee
We then define the lower comparison function
\be{deftildeU}
\tilde U(x,t):=K U_R(x-b,t+R^2-t_0),\quad(x,t)\in \bar D,
\ee
with $K=m_R:=\inf_{\Gamma_R\times(-\infty,-\eps]} u\in(-\infty,0]$.
We observe from \eqref{defUcomp} that $u=0\ge \tilde U$ on $S_1\times J$
and $u\ge m_R= \tilde U$ on $(S_2\times J)\cup(\Omega\times\{t_0-R^2\})$.
It follows from the maximum principle that $u\ge \tilde U$ in $D$
hence, by \eqref{boundUcomp},
\be{comptildeU}
u(z,t_0)\ge \tilde U(z,t_0)=K U_R(z-b,R^2)\ge C(n) R^{-1}m_Rz_n.
\ee
Proposition~\ref{thmancient3subquadr} then follows from assumption \eqref{hypsubquad} by letting $R\to \infty$.

\smallskip

{\bf Step 3.} {\it Proof of Theorem~\ref{thmancient3}(i) for $p=2$.}
The function $v:=e^u-1$ satisfies
$v_t-\Delta v=0$ with $v=0$ on $\partial\R^n_+$ and $v>-1$ in $Q$.
By the comparison argument in Step~2 applied with $u$ replaced by $v$
and $K=-1$ in the definition \eqref{deftildeU} of $\tilde U$, we obtain 
$v(z,t_0)\ge -C(n) R^{-1}z_n$ instead of \eqref{comptildeU}, hence
$v\ge 0$ upon letting $R\to \infty$. Therefore $u\ge 0$ and Theorem~\ref{thmancient3} for $p=2$ is proved.
\smallskip

{\bf Step 4.} {\it Proof of Lemma~\ref{lemsubquadr}.}
By the comparison argument in Step~2 applied with $R=1$ and $K=-M_1$
(where $M_1$ is defined in \eqref{defM1sub}), 
formulas \eqref{deftildeU} and \eqref{comptildeU} give
\be{lowerUsub}
u\ge -C(n)M_1 x_n, \quad x\in\Gamma_1,\ t\le -\eps. 
\ee
To show the upper estimate, fix $z=(z',z_n)\in\Gamma_1$ and $t_0\le-\eps$,
and set $v=e^{-t}(e^u-1)$. 
Since $p\le 2$, we have
 $$\begin{aligned} 
 v_t-\Delta v
&=e^{-t}e^u\big(u_t-\Delta u-|\nabla u|^2\big)-v\\
&=e^{-t}e^u\big(|\nabla u|^p-|\nabla u|^2\big)- v\le e^{-t}e^u- v=0.
 \end{aligned}$$
We take the upper comparison function
$$
\tilde U(x,t):=KU_1(x-b,t+1-t_0),\quad K=e^{1-t_0+M_1}, \ b=(z',-1).
$$
With the notation \eqref{defcomp1}-\eqref{defcomp2} with $R=1$, 
using \eqref{defUcomp}, we have $v=0\le \tilde U$ on $S_1\times J$
and $v\le K=\tilde U$ on $(S_2\times J)\cup(\Omega\times\{t_0-1\})$.
It follows from the maximum principle that $v\le \tilde U$ in $D$
hence, by \eqref{boundUcomp},
$$
u(z,t_0)\le e^{t_0}v(z,t_0)\le e^{t_0}\tilde U(z,t_0)=Ke^{t_0}U_1(z-b,1)\le C(n) e^{1+M_1}z_n.
$$
This combined with \eqref{lowerUsub} completes the proof of Lemma~\ref{lemsubquadr}.
\end{proof}
 
 \section{Proof of classification results  in half-space (Theorems~\ref{thmentire} and \ref{thmancient})} 
 \label{SecClass}

 \subsection{Proof of Theorem \ref{thmancient}}

 Let $u$ be an ancient solution of \eqref{eqE1} satisfying \eqref{hypthmancient}.
It suffices to prove that $u(x,t)=u(x_n)$ on $Q=Q_\eps:=\R^n_+\times(-\infty,-\eps]$ for each $\eps>0$.
Thus fix $\eps>0$.

\smallskip

{\bf Step 1.} {\it Uniform decay of $\nabla u$ and $u_t$ as $x_n\to\infty$.} 
We first claim that there exists a nonincreasing function $\phi: [1,\infty)\to[0,\infty)$ such that
\be{cond1}
 |\nabla u(x,t)|\le \phi(x_n),\ \ (x,t)\in \R^{n-1}\times[1,\infty)\times(-\infty,-\eps],
\ \ \hbox{with }
\lim_{s\to \infty}\phi(s)=0.
 \ee
By assumption \eqref{hypthmancient}, we have 
$$\lim_{A\to\infty}\phi_0(A)=0,
\qquad\hbox{where } \phi_0(A)=A^{-1}\sup_{\Gamma_A\times(-\infty,-\eps]} |u|.$$ 
For each $(x,t)\in \R^{n-1}\times[1,\infty)\times(-\infty,0)$, applying Theorem~\ref{propBern} to the function $v(y,s)=u(y+x,s+t-T)$
for $(y,s)\in Q_{R,T-t}$ with $R=x_n/2$ and $T>0$, we get
$$
|\nabla u(x,t)|=|\nabla v(0,T)|\le 
C(n,p) \bigg\{\phi_0({\ts\frac{3R}{2}})+\bigg(\frac{\phi_0(\frac{3R}{2})}{R}\bigg)^{\frac1p}+ \bigg(\frac{R\phi_0(\frac{3R}{2})}{T}\bigg)^{\frac1p}\bigg\}. 
$$
Letting $T\to\infty$, we deduce \eqref{cond1} 
with $\phi(s):=C(n,p)\big(s^{-\beta}+\sup_{\tau\ge s}\phi_0(3\tau)\big)$.

We next claim that there exists a  nonincreasing function $\tilde\phi: [1,\infty)\to[0,\infty)$ such that
 \be{cutancienb}
|u_t(x,t)| \le \tilde\phi(x_n),\ \ (x,t)\in \R^{n-1}\times[1,\infty)\times(-\infty,-\eps],
\ \ \hbox{with }
\lim_{s\to \infty}\tilde\phi(s)=0.
 \ee
Let $(\xi,t_0)\in\R^{n-1}\times(-\infty,-\eps]$, $A\ge 1$ and $D:=B'_1(\xi)\times(A-\ts\frac12,A+\ts\frac12) \times[t_0-2,t_0]$.
Set $$m=\inf_{D} u,\quad M=\sup_{D} u,\quad w=u-m.$$ 
By \eqref{cond1}, we have  $M-m\le 2\|\nabla u\|_{L^\infty(D)}\le f(A):=\underset{s\ge A-1/2}{\sup}\phi(s)\to 0$ as $s\to \infty$.
 Therefore,  
$$0\le w\le M-m\le f(A)\quad \hbox{and } \quad |\nabla w|=|\nabla u|\le f(A)\quad \hbox{in } D.$$
Setting $D':=B'_{1/2}(\xi)\times(A-\ts\frac14,A+\ts\frac14)\times [t_0-1,t_0]$, for each $q\in(1,\infty)$, by  
 interior $L^q$ parabolic regularity applied to the equation $w_t-\Delta w=|\nabla w|^p$, we have
 $$\begin{aligned}
 \|w\|_{W^{2,1;q}(D')}
 &\le C(n,q)(\|w\|_{L^\infty(D')}+\||\nabla w|^p\|_{L^\infty(D')})\\
 &\le \hat f(A):=C(n,q)(f(A)+f^p(A)).
  \end{aligned}$$
Then taking $q=q(n)$ large and using standard imbeddings, we get
$$\||\nabla w|^p\|_{C^{1/2,1/4}(D')}\le C\|w\|^p_{C^{3/2,1/4}(D')}\le C\|w\|_{W^{2,1;q}(D')}\le C\hat f(A)$$
wi!th $C=C(n,p)>0$
hence, by interior Schauder parabolic regularity,
$$|w_t(\xi,t_0)|\le C(n)(\|w\|_{L^\infty(D')}+\||\nabla w|^p\|_{C^{1/2,1/4}(D')})\le \tilde \phi(A):= C(n,p)\hat f(A).$$
Since $w_t=u_t$, this yields \eqref{cutancienb}.

 On the other hand, we shall use the following estimates:
\be{cond2}
|u(x,t)|\le Cy^\kappa,\quad (x,t)\in \Gamma_1\times(-\infty,-\eps],
 \ee
\be{cond2grad}
 |\nabla u(x,t)|\le C(y^{-\beta}+y^{1-\kappa}),\quad (x,t)\in \R^n_+\times(-\infty,-\eps],
 \ee
  where $\kappa=1-\beta>0$ if $p>2$, $\kappa=1$ if $p\in(1,2]$,
 and the constant $C>0$ may depend on $u$ and $\eps$.
For $p>2$, these estimates follow from Theorem~\ref{thmancient3}(ii).
  For $p\in(1,2]$, they follow from
   Lemma~\ref{lemsubquadr}, Theorem~\ref{propBern}
and assumption \eqref{hypthmancient} (which in particular guarantees that $\sup_{\,\Gamma_1\times(-\infty,-\eps]} |u|<\infty$).

\smallskip 

{\bf Step 2.} {\it Translation-compactness argument.}
  In this step, we write $(x,t):=(x',y,t)\in \R^{n-1}\times[0,\infty)\times(-\infty,-\eps]$.
  We shall use translations parallel to the boundary and translations in time.
Namely, fixing $h\in \R^{n-1}\setminus\{0\}$ (resp.,~$h<0$),
we set
\be{defv}
v(x,t)=u(X_h)-u(x,t), \ \hbox{ where }
 X_h=(x'+h,y,t) \ \ \big(\hbox{resp., }  X_h=(x,t+h)\big).
\ee
 It suffices to show that $v\equiv 0$. We note that $|v|\le C(n,p,\eps)$ for $y\le 1$, owing to \eqref{cond2}.
On the other hand, for $y\ge1$, by \eqref{cond1} (resp., \eqref{cutancienb}) we have $|v|\le C|h|$ 
with $C=\phi(1)$ or $\tilde\phi(1)$. Consequently, 
$$\sup_Q |v|<\infty.$$

Assume for contradiction that $v\not\equiv 0$. We may assume
 \be{supanc}
 \sigma:=\sup_Qv>0
 \ee
 (the case $ \inf_Q v<0$ being similar). 
 It follows from  \eqref{cond1}, \eqref{cutancienb}, \eqref{cond2} and \eqref{defv} that there exist 
 $0<\delta<1<A$ such that 
 \be{supanc2}
 |v|\le \sigma/2 \quad\hbox{ in $\R^{n-1}\times([0,\delta]\cup[A,\infty))\times(-\infty,-\eps]$.}
 \ee
 Therefore, 
 there exists a sequence $(x'_j,y_j,t_j)_j \in \R^{n-1}\times[\delta,A]\times(-\infty,-\eps]$ 
 such that 
\be{cvg1anc}
  \lim_{j\to \infty} v(x'_j,y_j,t_j)=\sigma
  \ee
 (covering either possibility whether or not the supremum is attained),
and we may assume that $y_j\to y_\infty\in [\delta,A]$. 

Next define 
$$u_j(x',y,t)= u(x'+x'_j,y,t+t_j),\quad (x',y,t)\in Q':=\R^{n-1}\times[\delta,A]\times(-\infty,0],$$
 and note that 
 \be{4.8}
 \sup_{(x,t)\in Q'}  \big(u_j(X_h)-u_j(x,t)\big)
 \le\sup_{(x,t)\in Q'} v=\sigma.
 \ee
By \eqref{cond2}, \eqref{cond2grad} and interior parabolic estimates, it follows that $(u_j)_j$  is relatively compact in $C^{2,1}_{loc}(Q')$.
Therefore, (some subsequence of) $(u_j)_j$ converges in that topology to a solution $U\in C^{2,1}(Q')$ of $U_t-\Delta U=|\nabla U|^p$.
From \eqref{cvg1anc}, we have 
\be{lim1anc}
U(h,y_\infty,0)-U(0,y_\infty,0)=\sigma
 \ \ \big(\hbox{resp., } U(0,y_\infty,h)-U(0,y_\infty,0)=\sigma\big).
\ee
Since, by  \eqref{supanc2}, 
$|v|\le \sigma/2$ in $\R^{n-1}\times\{\delta,A\}\times(-\infty,-\eps]$,
this implies that 
\be{yinftyint}
y_\infty\in (\delta,A).
\ee
Moreover, by \eqref{cond2grad}, we have 
\be{boundDU}
\sup_{Q'}|\nabla U|<\infty.
\ee

Put now 
$$V(x,t)=U(X_h)-U(x,t).$$
By \eqref{4.8}, for each $(x,t)\in Q'$, we have $V(x,t)=\lim_{j\to\infty} \big(u_j(X_h)-u_j(x,t)\big)
\le \sigma$, so that \eqref{lim1anc} implies $V(0,y_\infty,0)=\sigma=\sup_{Q'} V$. But $V$ satisfies 
$$V_t-\Delta V=B\cdot \nabla V\quad\hbox{ in $Q'$},$$
where 
$$B(x,t):=\int_0^1 G\Big(s\nabla U(X_h)+(1-s)\nabla U(x,t)\Big)ds,\quad G(\xi)=p|\xi|^{p-2}\xi.$$
In view of \eqref{yinftyint}, and since also $B$ is bounded in $Q'$ owing to \eqref{boundDU},
it follows from the strong maximum principle that $v\equiv\sigma$ in $\overline{Q'}$.
But this contradicts  \eqref{supanc2}.
Consequently, $v\equiv 0$ and the proof is complete.
\qed

 \subsection{Proof of classification of entire solutions (Theorem~\ref{thmentire})} 
For each $t\in(-\infty,0)$ and $T>0$, since the function $v(x,s)=u(x,s+t+T)$ for $(x,s)\in \R^n_+\times(-\infty,0)$ is an ancient solution 
 of \eqref{eqE1},
it follows from Theorem~\ref{thmancient3}(ii) that
$u(x,t)=v(x,-T)\le C(n,p)\big(x_n^{1-\beta}+x_n^{1+\beta}T^{-\beta}\big)$.
Upon letting $T\to\infty$, we deduce that
$$u(x,t)\le C(n,p)x_n^{1-\beta},\quad (x,t)\in \R^n_+\times\R.$$
For each $\tau>0$, the ancient solution $u_\tau(x,t):=u(x,t+\tau)$
thus satisfies the assumption of Theorem~\ref{thmancient}, and the latter guarantees
 that $u(x,t+\tau)$ depends only on $x_n$ for $t<0$.
Since this holds for any $\tau>0$, we conclude that $u=u(x_n)$ in $\R^n_+\times\R$.
\qed
    
 \section{Backward self-similar solutions: proof of Theorem~\ref{thmancient2}}
  \label{SecBwd}

We may assume $n=1$. 
In this section we assume $p>1$, we 
set $\gamma=\frac{p-2}{2(p-1)}=\frac{1-\beta}{2}\in (-\infty,\ts\frac12)$ 
and let
\be{defuphi}
u(x,t)=|t|^\gamma \phi(y),\quad y=x/\sqrt{|t|},\quad\hbox{ for  $x>0,\ t<0$}.
\ee
The existence of an ancient solution of \eqref{eqE1} of the form \eqref{defuphi} is reduced to the initial value problem
\be{ext}
\begin{cases}
\phi''=\frac12 y\phi'-\gamma \phi-|\phi'|^p,\quad y>0\\
 \phi(0)=0\\
\phi'(0)=\alpha>0.
\end{cases}
\ee
We split the proof of Theorem \ref{thmancient2} into the following two propositions,
which are respectively concerned with the existence and with the asymptotic behavior.
We denote by $[0,R_{max})$, with $R_{max}=R_{max}(\alpha)$, the maximal existence interval of $\phi$.

  \begin{prop}\label{existence}
 There exists $\alpha_0>0$ such that, for any $\alpha\in(0,\alpha_0)$,
there holds $R_{max}=\infty$,
$\phi, \phi'>0$ on $(0,\infty)$ and there exists $\bar R>0$ such that
$$\hbox{$\phi''(y)<0$ on $[0,\bar R)$\quad and\quad $\phi''(y)>0$ on $(\bar R,\infty)$}.$$
 \end{prop}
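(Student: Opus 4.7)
\emph{Local behavior and transversality.} Standard ODE theory gives a unique $\phi\in C^2([0,R_{\max}))$ on a maximal interval, and \eqref{ext} yields $\phi''(0)=-\alpha^p<0$, so $\phi,\phi'>0$ and $\phi''<0$ on an initial interval $(0,\delta)$. Differentiating the ODE,
\[
\phi''' = \Bigl(\tfrac12-\gamma\Bigr)\phi' + \Bigl[\tfrac{y}{2}-p|\phi'|^{p-2}\phi'\Bigr]\phi'',
\]
so at any $y_0>0$ with $\phi''(y_0)=0$ and $\phi'(y_0)>0$ one has $\phi'''(y_0)=\tfrac{\beta}{2}\phi'(y_0)>0$ (using $\tfrac12-\gamma=\tfrac{\beta}{2}$). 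Every zero of $\phi''$ on $\{\phi'>0\}$ is thus transversal and crosses from negative to positive, and a second zero on the same component is impossible; this yields both the uniqueness and the final sign structure of $\bar R$.

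\emph{Locating $\bar R$ for small $\alpha$.} Let $R_1,R_2\in(0,R_{\max}]$ denote the first zeros of $\phi'$ and $\phi''$, respectively; the goal is $R_2<R_1$ with $\phi'(R_2)>0$. On $[0,\min(R_1,R_2))$, $\phi'\leq\alpha$ and $\phi\leq\alpha y$ by monotonicity and concavity, while the crude bound $\phi''\geq -|\gamma|\alpha y-\alpha^p$ from \eqref{ext} integrates to $\phi'(y)\geq\alpha(1-\tfrac{|\gamma|}{2}y^2-\alpha^{p-1}y)$; fixing $y_0=y_0(p)>0$ and taking $\alpha$ small enough yields $\phi'(y)\geq\alpha/2$ on $[0,y_0]$. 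On the other hand, if $R_2<\infty$, evaluating \eqref{ext} at $R_2$ gives $(\phi'(R_2))^p = \tfrac{R_2}{2}\phi'(R_2) - \gamma\phi(R_2)$: for $\gamma<0$ this directly implies $R_2\leq 2(\phi'(R_2))^{p-1}\leq 2\alpha^{p-1}$, while for $\gamma\geq 0$ the bound $\phi(R_2)\leq\alpha R_2$ combined with the lower bound $\phi'(R_2)\geq\alpha/2$ yields $R_2\leq C(p)\alpha^{p-1}$. For $\alpha<\alpha_0(p)$ small enough, $C(p)\alpha^{p-1}<y_0$, closing the estimates: $R_2<y_0\leq R_1$ and $\phi'(\bar R)\geq\alpha/2>0$ with $\bar R:=R_2$. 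The same bound rules out $R_2=\infty$, and transversality then propagates $\phi''>0$, hence $\phi'>0$, on $(\bar R,R_{\max})$.

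\emph{Global existence.} To rule out $R_{\max}<\infty$, observe that if $\phi'$ remained bounded on $[0,R_{\max})$, then $\phi$ and $\phi''$ would also be bounded via \eqref{ext}, contradicting maximality; so $\phi'(y)$ would be unbounded as $y\to R_{\max}^-$. But for $M$ large enough (depending on $R_{\max}$), \eqref{ext} gives $\phi''<0$ whenever $\phi'\geq M$, because the loss term $-(\phi')^p$ dominates the drift $\tfrac{y}{2}\phi'$; hence once $\phi'$ reaches $M$ it begins to decrease, and $\phi'$ stays bounded on $[0,R_{\max})$, a contradiction. Thus $R_{\max}=\infty$ and the conclusion of Proposition~\ref{existence} follows.

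\emph{Main obstacle.} The delicate step is the locating argument, which requires a bootstrap coupling the lower bound $\phi'\geq\alpha/2$ to the upper bound $R_2\leq C(p)\alpha^{p-1}$. The relevant constants $y_0(p)$ and $C(p)$ both degenerate as $\gamma\uparrow\tfrac12$ (i.e., $p\to\infty$) or as $|\gamma|\to\infty$ (i.e., $p\to 1^+$), so the smallness threshold $\alpha_0(p)$ has to be chosen carefully to close both estimates simultaneously at each fixed $p>1$.
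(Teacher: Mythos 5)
Your structure matches the paper's: transversality of zeros of $\phi''$ via the sign of $\phi'''$, a bootstrap locating the first concavity change $\bar R$ via a lower bound on $\phi'$ near the origin, then global existence. However, the bootstrap as written breaks for $p\ge 3$.

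The weak point is the lower bound $\phi'\ge \alpha/2$. When $\gamma\ge 0$, evaluating the equation at $R_2$ gives $\tfrac{R_2}{2}\phi'(R_2)=(\phi'(R_2))^p+\gamma\phi(R_2)$, and plugging $\phi'(R_2)\ge\alpha/2$, $\phi(R_2)\le\alpha R_2$, $\phi'(R_2)\le\alpha$ yields
\[
R_2\,\alpha\Bigl(\tfrac14-\gamma\Bigr)\le(\phi'(R_2))^p\le\alpha^p.
\]
For $\gamma\ge\tfrac14$, i.e.~$p\ge 3$, the factor $\tfrac14-\gamma$ is $\le 0$ and this places no constraint on $R_2$, so the claimed bound $R_2\le C(p)\alpha^{p-1}$ does not follow. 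This is not merely a matter of constants degenerating (as you suggest under ``Main obstacle''): the sign is wrong for half the range $p>1$, and no smallness of $\alpha$ can repair it. The paper's proof avoids this by tracking $\phi'> (1-\eps)\alpha$ with $\eps:=\tfrac12-\max(\gamma,0)$ instead of $\phi'\ge\alpha/2$. With this sharper threshold, the combination $\tfrac{1-\eps}{2}-\max(\gamma,0)=\tfrac{\eps}{2}>0$ stays uniformly positive over all $p>1$, and the corresponding lower bound $\phi'\ge(1-\eps)\alpha$ is still provable on a short interval $[0,y_0(p)]$ for $\alpha$ small, by the same integration of $\phi''\ge -\bar\gamma\alpha y-\alpha^p$ that you carry out. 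Note that for $\gamma\le 0$ one has $\eps=\tfrac12$ and $(1-\eps)\alpha=\alpha/2$, so your threshold is exactly right there; the two coincide for $p\le 2$ and diverge only as $p$ grows.

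A secondary point: you claim ``the same bound rules out $R_2=\infty$,'' but the bound $R_2\le C(p)\alpha^{p-1}$ is obtained by evaluating the ODE at $y=R_2$ and is vacuous if $R_2=\infty$. The paper resolves this by building the floor constraint $\phi'>(1-\eps)\alpha$ directly into the definition of $\bar R$, so that $\bar R=\infty$ would force $\phi'\to\ell\ge(1-\eps)\alpha>0$, and then $|\phi'|^p\ge\tfrac12 y\phi'-\gamma\phi\sim\tfrac{\beta}{2}\ell y\to\infty$, a contradiction. Alternatively, once you have $\phi'\ge(1-\eps)\alpha$ on $[0,y_0]$ (for $\alpha$ small), evaluating the equation at $y=y_0$ (rather than $R_2$) shows directly $\phi''(y_0)>0$, hence $R_2<y_0<\infty$ without any prior evaluation at $R_2$. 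Your global existence argument is different from the paper's energy estimate $({\phi'}^2+\phi^2)'\le(y+\tilde\gamma)({\phi'}^2+\phi^2)$ but is fine once positivity of $\phi'$ has been established, since then $\phi>0$ and the term $-\gamma\phi$ cannot overpower $-|\phi'|^p$ for large $\phi'$ on a finite interval.
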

  
  \begin{proof}[Proof of Proposition \ref{existence}]
{\bf Step 1.} 
Set
\be{defeps}
\bar\gamma=\max(\gamma,0)\in[0,\ts\frac12),\quad
\eps=\ts\frac12-\bar\gamma\in (0,\ts\frac12).
\ee
We claim that, for any $\alpha>0$,
  \be{condR}
  \bar R:=\sup\Big\{r\in(0,R_{max}),\ \phi''<0 \ \hbox{and} \ \phi'>(1-\eps)\alpha\  \hbox {on  $(0,r)$}\Big\}
\in(0,\infty).
\ee  
Since $\phi''(0)=-\alpha^p<0$ owing to \eqref{ext}, we have $\bar R>0$.
We cannot have
 $\bar R=\infty$, since then
  $\ell:=\lim_{y\to \infty}\phi'(y)\in(0,\infty)$, hence
 $$|\phi'(y)|^p=\ts\frac12 y\phi'-\gamma \phi-\phi''(y)\ge\ts\frac12 y\phi'-\gamma \phi 
 \sim (\ts\frac12-\gamma)\ell y= \ts\frac{\beta}{2}\ell y\to\infty,\ \hbox{as } y\to\infty{\hskip -2pt}:$$
 a contradiction.
 
\smallskip

{\bf Step 2.}  Set
\be{defR1}
 R_1=\min\big(1,\sqrt{\eps/\bar\gamma}\big),\quad \alpha_0:=(\eps/2)^\beta.
 \ee
We claim that, for $\alpha\in(0,\alpha_0)$,
    \be{Rfini} 
 \bar R\le   R_1
 \Longrightarrow  \phi'(\bar R)>(1-\eps) \alpha.
  \ee
Indeed, if $\alpha<\alpha_0$
and $\bar R\le  R_1$,
then
    \be{Rfini2} 
    \ts\frac12\bar\gamma y^2+\alpha^{p-1} y<\eps\quad\hbox{on $(0,\bar R]$.}
    \ee
Since
     \be{Rfini3b} 
      0<\phi'\le \alpha
      \quad\hbox{and}\quad
     0<\phi\le \alpha y\quad \hbox{on $(0,\bar R]$,}
     \ee
     we deduce from \eqref{ext} that $\phi''\ge -\gamma \phi-{\phi'}^p\ge -\bar\gamma\alpha y-\alpha^p$ on $(0,\bar R]$.
Integrating and using \eqref{Rfini2}  we get
$$\phi'\ge \alpha\big(1-\ts\frac12\bar\gamma y^2-\alpha^{p-1} y\big)>(1-\eps) \alpha
\quad\hbox{on $(0,\bar R]$.}$$
This proves \eqref{Rfini}.

\smallskip

{\bf Step 3.} 
We claim that,
for $\alpha\in(0,\alpha_0)$,
     \be{Rfini3} 
     \phi'(\bar R)>(1-\eps)\alpha \quad\hbox{and}\quad \phi''(\bar R)= 0.
     \ee
 Indeed, if not then, by the definition of $\bar R$,
 we have $\phi'(\bar R)=(1-\eps)\alpha$ and $\phi''(\bar R)\le 0$,
 hence $\bar R> R_1$ by~\eqref{Rfini}.
Using \eqref{ext}, \eqref{defeps}, \eqref{defR1} and \eqref{Rfini3b}, 
 it follows that
 $$\begin{aligned}
 \phi''(\bar R)
 &\ge \ts\frac12(1-\eps)\alpha \bar R-\bar\gamma \alpha \bar R-\alpha^p \\
 &=\ts\frac12\big(1-2\bar\gamma-\eps- 2\alpha^{p-1}\big) \alpha \bar R
 =\ts\frac12(\eps- 2\alpha^{p-1}) \alpha \bar R> 0:
\end{aligned}$$
a contradiction.
 \smallskip
 
 {\bf Step 4.} 
We claim that 
     \be{Rfini4} 
     \phi,\phi' >0 \quad\hbox{on $(0,R_{max})$,}
     \qquad
     \phi''>0
 \quad\hbox{on $(\bar R,R_{max})$}
 \ee
 and
      \be{Rmaxinfty} 
 R_{max}=\infty.
 \ee
 
Differentiating the equation, we obtain 
     \be{eqphi3} 
     \phi'''=(\ts\frac12-\gamma)\phi'+\frac12 y\phi''-p|\phi'|^{p-2}\phi'\phi''.
     \ee
In particular, by \eqref{Rfini3}, we have $\phi'''(\bar R)>0$, hence 
$\phi''>0$ for $y>\bar R$ close to $\bar R$.
 To verify \eqref{Rfini4}, since $\phi'>0$ on $(0,\bar R]$, it suffices to show that $\phi''>0$ on $(\bar R,R_{max})$.
If not then there exists a minimal $R_1\in(\bar R,R_{max})$ such that $\phi''(R_1)=0$. Then $\phi'(R_1)>0$ and $\phi'''(R_1)\le0$.
But \eqref{eqphi3} implies $ \phi'''(R_1)=(\ts\frac12-\gamma)\phi'(R_1)>0$:
a contradiction.
 
Finally, from \eqref{ext},
setting $\tilde\gamma=\max(0,1-\gamma)$, we have 
$$\big({\phi'}^2+\phi^2\big)'=2\big(\phi''+\phi\big)\phi'\le y{\phi'}^2+2\tilde\gamma\phi\phi'\le (y+\tilde\gamma)\big({\phi'}^2+\phi^2\big)$$
for all $y\in (0,R_{max})$, hence
 ${\phi'}^2+\phi^2\le C\exp[\frac12 y^2+\tilde\gamma y]$,
 which guarantees \eqref{Rmaxinfty}.
 The proof is complete.  \end{proof}

 We state the second proposition as follows.
 \begin{prop}\label{equivalence}
 Let $\phi$ be a solution of \eqref{ext} as in Proposition \ref{existence}. Then 
 \be{limitinfty}
 \lim_{y\to \infty} \frac{\phi(y)}{y^{\beta+1}}=L:=\frac{p^{-\beta}}{\beta+1}.
 \ee
 \end{prop}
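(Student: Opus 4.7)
My plan for Proposition~\ref{equivalence} is to establish two-sided polynomial bounds for $\phi$ and $\phi'$, identify $L$ as the unique constant for which the ansatz $\phi\sim Ly^{\beta+1}$ is consistent with the ODE at leading order, and then upgrade the bounds to actual convergence by a rescaling/compactness argument.

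First I would derive the upper bound: by Proposition~\ref{existence}, $\phi$ is convex on $(\bar R,\infty)$ with $\phi,\phi'>0$, so $\phi''>0$ applied to \eqref{ext} yields $(\phi')^p<\tfrac{y}{2}\phi'-\gamma\phi$. Combining with the tangent-line estimate $\phi(y)\le y\phi'(y)+\alpha\bar R$ (valid on $(\bar R,\infty)$ since $\phi$ is convex and $\phi(0)=0$) and splitting cases on the sign of $\gamma$, I obtain $\phi'(y)\le Cy^\beta$ and $\phi(y)\le Cy^{\beta+1}$. For the matching lower bound $\phi'(y)\ge cy^\beta$, I argue by contradiction: using $\phi\ge 0$ (resp.~the convexity estimate if $\gamma>0$) in the ODE gives $\phi''(y)\ge c_0\,y\phi'(y)-(\phi'(y))^p-C$ for some $c_0>0$; if $w:=\phi'/y^\beta$ stayed below a small threshold on a long interval, then $w'+\beta w/y\ge (c_0/2)yw$ would force super-polynomial growth of $w$ by Gronwall, contradicting the upper bound.

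The value $L$ is identified by substituting $\bar\phi(y)=Ly^{\beta+1}$ into the ODE and writing $A=(\beta+1)L$: the coefficient of $y^{\beta+1}$ vanishes iff $A^p=A/2-\gamma L$, which using $\gamma=(1-\beta)/2$ and $p\beta=\beta+1$ reduces to $A^{p-1}=1/p$, giving $A=p^{-\beta}$ and hence $L=p^{-\beta}/(\beta+1)$ as claimed. To upgrade the bounds to convergence, I would introduce the rescaled family $\phi_\lambda(z):=\lambda^{-(\beta+1)}\phi(\lambda z)$, which using $p\beta=\beta+1$ satisfies
\[
\lambda^{-2}\phi_\lambda''(z)=\tfrac{z}{2}\phi_\lambda'(z)-\gamma\phi_\lambda(z)-(\phi_\lambda'(z))^p.
\]
From the previous step, $\phi_\lambda$ and $\phi_\lambda'$ are locally uniformly bounded on $(0,\infty)$, and $\phi_\lambda$ is convex on $(\bar R/\lambda,\infty)$. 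Along any sequence $\lambda_n\to\infty$, Arzel\`a--Ascoli applied to $\phi_{\lambda_n}$ together with Helly's selection theorem applied to the monotone $\phi_{\lambda_n}'$ yields a subsequence for which $\phi_{\lambda_n}\to\phi_\infty$ locally uniformly and $\phi_{\lambda_n}'\to\phi_\infty'$ pointwise a.e., where $\phi_\infty$ is convex, $\phi_\infty(0^+)=0$, $cz^{\beta+1}\le\phi_\infty(z)\le Cz^{\beta+1}$, and $\phi_\infty$ satisfies the first-order limit equation $(\phi_\infty')^p=\tfrac{z}{2}\phi_\infty'-\gamma\phi_\infty$. A phase-plane analysis of the scalar ODE for $w:=\phi_\infty'/z^\beta$ (passing to $s=\log z$ reveals an autonomous ODE whose RHS vanishes only at $w=A=p^{-\beta}$ and whose linearization there is stable) shows that the unique such $\phi_\infty$ in the basin determined by our bounds is $\phi_\infty(z)=Lz^{\beta+1}$. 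Hence every subsequential limit agrees, and $\phi(\lambda)/\lambda^{\beta+1}=\phi_\lambda(1)\to L$.

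The hardest step will be the convergence argument in the rescaling, specifically the uniqueness of $\phi_\infty$. The limit equation is first-order and implicit, with multiple branches of $\phi_\infty'$ solving it for given $\phi_\infty$; pinning down the correct branch requires combining convexity of $\phi_\infty$ (inherited from $\phi$, which is essential both to enable the Helly argument and to pass to the limit in the nonlinear term $(\phi_\lambda')^p$) with the two-sided polynomial bounds that place $w$ in the basin of attraction of the stable equilibrium $w\equiv A$.
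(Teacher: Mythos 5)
Your proposal takes a genuinely different route from the paper. You go through a rescaling and compactness argument ($\phi_\lambda(z)=\lambda^{-(\beta+1)}\phi(\lambda z)$, subsequential limits, then analysis of a first-order implicit limit ODE), whereas the paper works directly and entirely at finite $y$ with the single scalar quantity $\psi=\phi/y^{\beta+1}$: it first establishes a \emph{second}-derivative estimate $0\le\phi''(y)\le Cy^{\beta-1}$ (Lemma~\ref{lem4}, via an integrating factor of the form $e^{-ky^2}$), and then, in Lemma~\ref{lem3}, proves (a) along any sequence $y_i$ with $\psi'(y_i)=o(1/y_i)$ one has $\psi(y_i)\to L$, (b) $\lim\psi$ exists by playing two such sequences off each other, and (c) such a sequence always exists by a simple barrier argument. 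No compactness, no limit PDE/ODE.

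Concerning your proposal, there are two genuine gaps beyond the one you flag. First, you pass to the limit in the rescaled equation and assert that the viscous term $\lambda^{-2}\phi_\lambda''(z)$ vanishes, but from the equation and your bounds $\phi'\le Cy^\beta$, $\phi\le Cy^{\beta+1}$ you only get the crude estimate $|\phi''(y)|=O(y^{\beta+1})$, which translates to $\lambda^{-2}\phi_\lambda''(z)=O(z^{\beta+1})$ --- bounded but not $o(1)$. To make this term disappear in the limit you need either the improved bound $\phi''=o(y^{\beta+1})$ (which, as in Lemma~\ref{lem4}, requires a separate differential-inequality argument exploiting the good sign of $\tfrac{y}{2}-p\phi'^{p-1}$), or you must argue at the integrated level and use monotonicity of $\phi_\lambda'$: $\int_a^z\lambda^{-2}\phi_\lambda''\,ds=\lambda^{-2}\bigl[\phi_\lambda'(z)-\phi_\lambda'(a)\bigr]\to 0$; your text mentions neither. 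Second, the limit relation $(\phi_\infty')^p=\tfrac{z}{2}\phi_\infty'-\gamma\phi_\infty$ is an implicit equation with multiple branches in $\phi_\infty'$, and the limit $\phi_\infty$ is a priori only convex (so $\phi_\infty'$ is monotone and may jump between branches at countably many points). You then differentiate the implicit relation to manufacture an autonomous ODE for $w=\phi_\infty'/z^\beta$, but this differentiation requires $\phi_\infty\in C^2$ and requires staying away from the singular set $\{\,pw^{p-1}=1/2\,\}$ where $\partial_r\bigl(r^p-\tfrac{z}{2}r\bigr)=0$. The singular value $w_0=(2p)^{-\beta}$ is strictly below the equilibrium $A=p^{-\beta}$, so your two-sided bounds $c\le w\le C$ would have to be quantitatively sharp enough to exclude $w_0$ from the range of $w$; this is not established and is not automatic from the contradiction argument you sketch. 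Pinning all of this down is substantially more work than the paper's route; I would recommend proving the extra estimate $\phi''(y)=O(y^{\beta-1})$ up front, which both kills the viscous term cleanly and can then feed a direct argument on $\psi=\phi/y^{\beta+1}$ along the lines of Lemma~\ref{lem3}, bypassing the branch ambiguities entirely.
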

 
For the proof of Proposition \ref{equivalence}, in view of Proposition \ref{existence}, we may fix $y_0>0$ such that 
 \be{posphiphiprime}
 \phi, \ \phi',\ \phi''>0\quad \hbox{on } [y_0,\infty).
 \ee
 We rely on the following lemmas.
 
 \begin{lem}\label{lem2}
  Let $\phi$ be a solution of \eqref{ext} as in Proposition \ref{existence}.
  \smallskip
  
 (i) There exists $C>0$ such that
\be{4a}
 \phi'(y)\le Cy^\beta,\quad y\ge y_0.
\ee
  
  (ii) The function $Z(y)=y^{-1}\phi'^{p-1}$ satisfies
 \be{4}
\liminf_{y\to \infty} \ Z(y)\ge \lambda:=\strut \ts\frac12 \min(1,\beta).
 \ee
In particular there exists $C_1>0$ such that 
 \be{4'}
\phi'(y)\ge C_1 y^\beta,\quad y\ge 1.
 \ee
 \end{lem}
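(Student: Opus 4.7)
\textbf{Proof plan for Lemma~\ref{lem2}.}

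\emph{Part (i).} I will exploit the ODE together with $\phi''>0$ on $[y_0,\infty)$ from \eqref{posphiphiprime}. Rewriting as $(\phi')^p=\tfrac12 y\phi'-\gamma\phi-\phi''$, positivity of $\phi''$ yields $(\phi')^p<\tfrac12 y\phi'-\gamma\phi$. When $p\ge 2$ (so $\gamma\ge 0$), one simply drops the non-positive term $-\gamma\phi$ to obtain $(\phi')^{p-1}<y/2$. When $1<p<2$ (so $\gamma<0$), the elementary inequality $\phi(y)\le\phi(y_0)+y\phi'(y)$, valid because $\phi'$ is increasing on $[y_0,\infty)$, gives $(\phi')^p\le(\tfrac12+|\gamma|)y\phi'+|\gamma|\phi(y_0)$; dividing by $\phi'\ge\phi'(y_0)>0$ then yields \eqref{4a}.

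\emph{Part (ii), derivation of an ODE inequality for $Z$.} Differentiating $yZ=(\phi')^{p-1}$ and substituting the ODE for $\phi''$, using $(\phi')^{p-1}=yZ$ and $(\phi')^{2p-2}=y^2Z^2$, I obtain
\[
yZ' \,=\, (p-1)y^2 Z\Big(\tfrac12-Z\Big)\,-\,(p-1)\gamma\phi(\phi')^{p-2}\,-\,Z.
\]
The middle term is controlled using $\phi(\phi')^{p-2}\le\phi(y_0)(\phi')^{p-2}+y^2 Z$ (again via $\phi\le\phi(y_0)+y\phi'$) together with $(\phi')^{p-2}\le Cy^{1-\beta}$ from (i), which uses the identity $\beta(p-2)=1-\beta$. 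For $p\ge 2$, invoking the algebraic identity $\tfrac12-\gamma=\tfrac{\beta}{2}$, this produces
\[
yZ' \,\ge\, (p-1)y^2 Z\Big(\tfrac{\beta}{2}-Z\Big)\,-\,C'y^{1-\beta}\,-\,Z,
\]
while for $1<p<2$ the middle term has a favorable sign and is simply discarded, giving $yZ'\ge(p-1)y^2 Z\bigl(\tfrac12-Z\bigr)-Z$. In either case the right-hand side is of the form $(p-1)y^2 Z(\lambda^*-Z)-o(y^2)$ with $\lambda^*=2\lambda$.

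\emph{Trapping argument.} Fix $\eps\in(0,\lambda)$. Since $\phi'\ge\phi'(y_0)>0$ one has the uniform lower bound $yZ\ge c_0>0$, so on the region $\{Z\le\lambda-\eps\}$ the main drift $(p-1)y^2 Z\eps\ge(p-1)c_0\eps\,y$ eventually dominates the error terms $C'y^{1-\beta}+Z$ (recall $Z\le C$ by (i)). Consequently there exists $Y_*$ such that $Z'(y)>0$ whenever $y\ge Y_*$ and $Z(y)\le\lambda-\eps$. Two consequences follow: once $Z$ exceeds $\lambda-\eps$ it cannot re-cross this level from above, because at such a crossing one would have $Z'\le 0$, contradicting $Z'>0$; and $Z$ cannot stay $\le\lambda-\eps$ on all of $[Y_*,\infty)$, since integrating the resulting inequality $(\log Z)'\ge cy$ would give $Z(y)\ge Z(Y_*)\exp[c(y^2-Y_*^2)/2]\to\infty$, contradicting \eqref{4a}. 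Therefore $Z(y)>\lambda-\eps$ for all large $y$, and \eqref{4} follows as $\eps\to 0$. The pointwise bound \eqref{4'} is then immediate from $(\phi')^{p-1}=yZ\ge(\lambda/2)y$ for $y$ large, while positivity and continuity of $\phi'$ handle the compact range $y\in[1,Y]$.

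\emph{Main obstacle.} The delicate point, occurring only for $p>2$, is the control of $(p-1)\gamma\phi(\phi')^{p-2}$: a crude bound such as $\phi\le Cy^{\beta+1}$ obtained by integrating (i) would leave the sign of the effective drift ambiguous exactly at the critical level $Z=\beta/2$. What rescues the argument is the sharper integral bound $\phi\le\phi(y_0)+y\phi'$ (valid purely because $\phi'$ is increasing on $[y_0,\infty)$): it produces a term proportional to $y^2 Z$ itself, enabling the exact algebraic cancellation $\tfrac12-\gamma=\tfrac{\beta}{2}$ and reducing the remainder to a genuine $o(y^2)$.
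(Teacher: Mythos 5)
Your proof is correct, and it reaches the same conclusions by a genuinely different route in both parts. For (i), you replace the paper's ODE-comparison step (which first bounds $\phi\le Cy^{\beta+1}$ and then back-substitutes) with a direct appeal to the convexity bound $\phi(y)\le\phi(y_0)+y\phi'(y)$ on $[y_0,\infty)$; this closes the estimate in one stroke, although the case split on the sign of $\gamma$ is needed. For (ii), the paper introduces the auxiliary quantity $h=\phi''/(y\phi')$, observes that $\bigl(y(Z+h)\bigr)'=\bigl(\tfrac{y}{2}-\gamma\phi/\phi'\bigr)'\ge\tfrac12-\gamma=\lambda$ (using convexity of $\phi$ and the sign of $\gamma$), integrates this to $Z+h\ge\lambda-C/y$, and plugs that into the exact identity $\beta Z'/(yZ)=h-\beta/y^2$ to arrive at the differential inequality $\beta Z'/(yZ)\ge\lambda-Z-C/y$. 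You work instead with the raw identity $yZ'=(p-1)y^2 Z(\tfrac12-Z)-(p-1)\gamma\phi(\phi')^{p-2}-Z$ and tame the troublesome middle term (for $p>2$) with the same convexity estimate $\phi\le\phi(y_0)+y\phi'$, so that its leading part merges with the drift under the identity $\tfrac12-\gamma=\tfrac{\beta}{2}$; the residual is then estimated via part (i). The two routes isolate the identical algebraic cancellation but package it differently: the paper's $h$-trick produces a cleaner differential inequality (errors of size $C/y$ rather than $Cy^{1-\beta}$), while yours is more elementary and makes transparent why $\phi\le\phi(y_0)+y\phi'$, rather than the cruder $\phi\le Cy^{\beta+1}$, is what rescues the argument. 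The concluding contradiction step is also equivalent in substance: the paper argues no local minimum of $Z$ below $\lambda-\eps$ can occur at large $y$, whereas your trapping argument (no re-crossing plus blow-up of $Z$ if trapped below $\lambda-\eps$) is a mild variant. One minor remark: the lower bound $yZ\ge c_0>0$ you use in the trapping step follows from $\phi'\ge\phi'(y_0)>0$, which holds because $\phi''>0$ on $[y_0,\infty)$; worth making this explicit.
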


   \begin{proof}[Proof of Lemma \ref{lem2}] 
 (i) 
By \eqref{ext} and \eqref{posphiphiprime},   for $y\ge y_0$, we have
 $${\phi'}^p\le \ts\frac12 y\phi'+|\gamma|\phi\le \max(y\phi',2|\gamma|\phi),$$
hence
  $$\phi'\le y^\beta+(2|\gamma|)^{1/p}\phi^{1/p}.$$
  By ODE comparison, we deduce that
     $\phi\le \bar\phi :=Cy^{\beta+1}$ for some large $C>0$, and then \eqref{4a}.
     
     \smallskip
     (ii) We claim that, for some $C>0$,
          \be{claimbetaZ}
          \frac{\beta Z'}{yZ}  \ge \lambda-Z-\frac{C}{y},\quad y>y_0.
          \ee
By \eqref{ext}, we have
          \be{eqhZ}
          h:=\frac{\phi''}{y\phi'}=\frac12-\frac{\gamma \phi}{y\phi'}-Z.
          \ee
Moreover,
 $yZ'+Z=(yZ)'=(\phi'^{p-1})'=(p-1)y^2Zh$
 and, dividing by $(p-1)y^2Z$, we obtain
          \be{eqhZ2}
          \frac{\beta Z'}{yZ}=h-\frac{\beta}{y^2}
   =\frac12-Z-\frac{\gamma \phi}{y\phi'}-\frac{\beta}{y^2}.
   \ee
  If $1<p\le 2$, hence $\gamma\le 0$ and $\lambda=1/2$, this in particular yields \eqref{claimbetaZ}.
If $p>2$, hence $\gamma>0$, we deduce from \eqref{posphiphiprime} and \eqref{eqhZ} that
 $$(y(Z+h))'=\frac12 -\gamma\Big( \frac{\phi}{\phi'}\Big)'=\frac{1}{2}-\gamma+\gamma\frac{\phi \phi''}{\phi'^2}
 \ge \frac12 -\gamma=\frac{\beta}{2}=\lambda.$$
By integration, we get
 \be{3}
 Z+h\ge \lambda-\frac{C}{y}
 \ee
 and \eqref{claimbetaZ} follows from the first equality in \eqref{eqhZ2}.

Now, it follows from  \eqref{claimbetaZ}  that
$\limsup_{y\to \infty}\,Z \ge \lambda$ 
(since otherwise, $Z'/(yZ)\ge\eta>0$ as $y\to \infty$, hence $\lim_{y\to \infty}\,Z=\infty$, a~contradiction).
Therefore, if \eqref{4} fails, then there exist  $\eps>0$ and $y_i\to \infty$ such that 
$$Z(y_i)\le \lambda-\eps\quad \hbox{and }\quad Z'(y_i)=0.$$
But this contradicts \eqref{claimbetaZ}, hence \eqref{4} is proved.
 \end{proof}
 
 \begin{lem}\label{lem4}
  Let $\phi$ be a solution of \eqref{ext} as in Proposition \ref{existence}.
There exist $C, A>0$ such that
 \be{boundphi2sec5}
 0\le \phi''(y)\le C y^{\beta-1},\quad y\ge A.
 \ee
 \end{lem}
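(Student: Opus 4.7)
The lower bound $\phi''\ge 0$ on $[y_0,\infty)$ is already contained in \eqref{posphiphiprime}, so only the upper estimate needs to be proved. The plan is to derive a first-order linear differential inequality for $F:=\phi''$ and then integrate it against a Gaussian factor. Differentiating the ODE in \eqref{ext} once and using $\frac12-\gamma=\frac{\beta}{2}$ gives
\begin{equation*}
\phi''' \;=\; \tfrac{\beta}{2}\phi' \;+\; \phi''\Bigl(\tfrac{y}{2}-p(\phi')^{p-1}\Bigr),
\end{equation*}
which (with $Z(y)=y^{-1}(\phi')^{p-1}$ as in Lemma~\ref{lem2}) may be rewritten as
\begin{equation*}
\phi''' \;+\; y\bigl(pZ(y)-\tfrac12\bigr)\phi'' \;=\; \tfrac{\beta}{2}\phi'.
\end{equation*}

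Second, I would use Lemma~\ref{lem2} to control the two coefficients. An elementary check shows that $\lambda=\tfrac12\min(1,\beta)$ satisfies the strict inequality $\lambda>\tfrac{1}{2p}=\tfrac{\beta}{2(\beta+1)}$ in both cases $\beta\le 1$ and $\beta>1$. Combined with $\liminf_{y\to\infty}Z(y)\ge\lambda$ (Lemma~\ref{lem2}(ii)), this gives some $\delta_1>0$ and $y_2\ge y_0$ such that $pZ(y)-\tfrac12\ge \delta_1$ for all $y\ge y_2$. Together with $\phi'\le Cy^\beta$ from Lemma~\ref{lem2}(i), we obtain
\begin{equation*}
\phi'''(y) \;+\; \delta_1\,y\,\phi''(y) \;\le\; C_2\, y^\beta,\qquad y\ge y_2.
\end{equation*}

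Third, multiplying by the integrating factor $e^{\delta_1 y^2/2}$ turns the above into $\bigl(e^{\delta_1 y^2/2}\phi''\bigr)'\le C_2\,y^\beta e^{\delta_1 y^2/2}$, and integration from $y_2$ to $y$ yields
\begin{equation*}
\phi''(y) \;\le\; \phi''(y_2)\,e^{-\delta_1(y^2-y_2^2)/2} \;+\; C_2\,e^{-\delta_1 y^2/2}\!\int_{y_2}^{y}\! s^\beta e^{\delta_1 s^2/2}\,ds.
\end{equation*}
The first term decays super-exponentially. For the second, a single integration by parts (writing $s^\beta e^{\delta_1 s^2/2}=\delta_1^{-1}s^{\beta-1}\cdot\tfrac{d}{ds}e^{\delta_1 s^2/2}$) gives
\begin{equation*}
\int_{y_2}^{y} s^\beta e^{\delta_1 s^2/2}\,ds \;=\; \tfrac{1}{\delta_1}\,y^{\beta-1} e^{\delta_1 y^2/2}\bigl(1+o(1)\bigr),
\end{equation*}
because the remainder integral of order $s^{\beta-2}e^{\delta_1 s^2/2}$ is smaller by a factor $y^{-2}$. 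Hence $\phi''(y)\le C\,y^{\beta-1}$ for all $y$ beyond some $A$, which is the required bound.

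The only genuinely delicate point is the verification of the strict inequality $\lambda>\tfrac{1}{2p}$, since it is this gap that produces a uniformly negative coefficient of order $y$ in front of $\phi''$, and hence the Gaussian damping that exactly matches the growth $y^\beta$ of the forcing term to generate the sharp rate $y^{\beta-1}$. An alternative route via the Riccati equation satisfied by $w=\phi''/\phi'$ is possible but seems less clean; the linear approach above avoids any positivity issues since $\phi''>0$ is already known.
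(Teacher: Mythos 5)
Your proof is correct and follows essentially the same route as the paper: differentiate the ODE to get \eqref{eqphi3}, use Lemma~\ref{lem2}(i) to bound the forcing term by $Cy^\beta$ and Lemma~\ref{lem2}(ii) to show the coefficient $p(\phi')^{p-1}-y/2$ is $\gtrsim y$ for $y$ large, then integrate against the Gaussian factor and perform one integration by parts. The only cosmetic difference is that the paper writes the lower bound on the damping coefficient directly as $\tfrac14\min(p-1,\beta)\,y$ rather than isolating the equivalent condition $\lambda>\tfrac{1}{2p}$, but the computation is the same.
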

 \begin{proof}[Proof of Lemma \ref{lem4}]
 By \eqref{eqphi3} and \eqref{4a} we have
\be{phi2sec5a}
 \phi'''+(p\phi'^{p-1}-\ts\frac{y}{2})\phi''=(\ts\frac12-\gamma)\phi'\le C y^\beta.
 \ee
 Let $\eps>0$. By \eqref{4}, there exists $y_1>y_0$ such that 
 $\phi'^{p-1}\ge (\ts\frac12 \min(1,\beta)-\eps)y$ for $y\ge y_1$ hence,
 taking $\eps=\eps(p)>0$ small, 
\be{phi2sec5b}
 p\phi'^{p-1}-\ts\frac{y}{2}\ge  (\ts\frac12 \min(p-1,\beta)-p\eps)y\ge 
 \ts\frac14 \min(p-1,\beta)y,\quad y\ge y_1.
 \ee
 Setting $k:= \ts\frac18 \min(p-1,\beta)$, combining \eqref{phi2sec5a}, \eqref{phi2sec5b} and using $\phi''\ge 0$, we get
 $\big(\phi''e^{ky^2}\big)'\le Cy^\beta e^{ky^2}$.
Integrating by parts, we obtain
 \begin{align*}
 \phi''e^{ky^2}&\le C\Big( 1+\int_{y_1}^y s^{\beta-1}\big(se^{ks^2}\big)ds\Big)
 \le C\Big( 1+y^{\beta-1}e^{ky^2}\Big),\quad y\ge y_1,
 \end{align*}
hence  \eqref{boundphi2sec5}.
 \end{proof}
 
Our last lemma will be also useful in Section~\ref{SecFwd}. We therefore take $\sigma\in\{-1,1\}$
 and consider more generally the equation
 \be{ODEsigma}
 \sigma\phi''=\frac12 y\phi'-\gamma \phi-|\phi'|^p.
 \ee

 \begin{lem}\label{lem3}
 Let $\sigma\in\{-1,1\}$, $A,C>0$. Let $\phi$ be a global solution of \eqref{ODEsigma} on $[A,\infty)$ such that
 \be{hypODEsigma}
 \phi'(y)\ge C y^\beta \ \hbox{ and } \ |\phi''(y)|=o\big(y^{\beta+1}\big),\hbox{ as $y\to\infty$.}
 \ee
 Then
 \be{limphi0}
\lim_{y\to \infty} \frac{\phi(y)}{y^{\beta+1}}=L:=\frac{p^{-\beta}}{\beta+1}.
\ee
 \end{lem}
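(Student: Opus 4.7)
The plan is to reduce the asymptotic problem to a scalar non-autonomous ODE for $H(y) := \phi(y)/y^{\beta+1}$ whose leading autonomous part has $L$ as an exponentially stable fixed point. The value of $L$ arises from inserting the ansatz $\phi(y) \sim L y^{\beta+1}$ (so $\phi'(y) \sim L(\beta+1)y^\beta = p^{-\beta}y^\beta$) into \eqref{ODEsigma}: using $\beta p = \beta+1$ and $\gamma = (1-\beta)/2$, the $\sigma\phi''$ term is $o(y^{\beta+1})$ by hypothesis, and the resulting algebraic relation $(L(\beta+1))^p = \tfrac12 L(\beta+1) - \gamma L$ is satisfied precisely by $L = p^{-\beta}/(\beta+1)$.

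First I would establish the a priori upper bound $\phi'(y) \le C y^\beta$ (whence $\phi(y) \le Cy^{\beta+1}$ by integration). Rewriting \eqref{ODEsigma} as $\phi'^p + \gamma\phi = \tfrac{y}{2}\phi' - \sigma\phi''$ and applying Young's inequality $\tfrac{y}{2}\phi' \le \tfrac{1}{2p}\phi'^p + \tfrac{1}{2(\beta+1)} y^{\beta+1}$ together with $|\phi''| \le \eps y^{\beta+1}$ yields $\tfrac{2p-1}{2p}\phi'^p + \gamma\phi \le C y^{\beta+1}$ for $y$ large. For $\gamma \ge 0$ (i.e.~$p \ge 2$) this immediately gives $\phi' \le C' y^\beta$; for $\gamma < 0$ (i.e.~$p < 2$) I would close the argument by considering the running maximum $\Lambda(y) := \sup_{s \in [A,y]} \phi'(s)/s^\beta$, bounding $\phi(y) \le \Lambda(y)\, y^{\beta+1}/(\beta+1) + O(1)$ via integration of $\phi'$ and substituting to obtain an algebraic inequality $\Lambda(y)^p \le c_1 \Lambda(y) + c_2$ which forces $\Lambda$ uniformly bounded. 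Setting $G(y) := \phi'(y)/y^\beta$ (now bounded above and below by hypothesis) and dividing \eqref{ODEsigma} by $y^{\beta+1}$ gives the asymptotic constraint
\[ G(y)^p - \tfrac{1}{2}G(y) + \gamma H(y) = \eps(y), \qquad \eps(y) := -\sigma\phi''(y)/y^{\beta+1} \to 0, \]
while direct differentiation of $H$ yields the kinematic identity $y H'(y) = G(y) - (\beta+1)H(y)$.

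At the target value $G = p^{-\beta}$ we have $pG^{p-1} = 1$, hence $\partial_G(G^p - G/2) = \tfrac12 \ne 0$, and the implicit function theorem gives $G(y) = g(H(y)) + O(\eps(y))$ locally near $H=L$, with $g(L) = p^{-\beta}$ and $g'(L) = -2\gamma$. Substituting into the kinematic identity yields, in the variable $\tau := \log y$, the reduced equation $\dot H = g(H) - (\beta+1) H + O(\eps(e^\tau))$; the linearization of the autonomous part at $L$ equals $g'(L) - (\beta+1) = -(2\gamma + \beta+1) = -2$, so $L$ is exponentially stable. The Lyapunov function $V := (H-L)^2$ then satisfies $\dot V \le -3 V + C \eps(e^\tau)^2$ once $H$ lies in a small neighborhood of $L$, and Gronwall gives $V(\tau) \to 0$. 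The main obstacle is that this Lyapunov estimate is only local; I would bridge the gap by an asymptotically-autonomous $\omega$-limit argument, using that $H$ is bounded and that the limiting autonomous ODE $\dot H = g(H) - (\beta+1)H$ has $L$ as its unique equilibrium in the relevant range (by monotonicity of $g(H) - (\beta+1)H$ near $L$, whose derivative equals $-2$), to conclude that $H$ must eventually enter the basin of attraction of $L$, after which the local Lyapunov estimate finishes the proof.
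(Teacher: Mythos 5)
Your computation of the target value $L$ and the verification of the algebraic identities ($\beta p=\beta+1$, $\tfrac12-\tfrac{\gamma}{\beta+1}=\tfrac1p$, $pg^{p-1}(L)=1$, $g'(L)-(\beta+1)=-2$) are all correct, and the preliminary step of bounding $\phi'\le Cy^\beta$ (hence $\phi\le Cy^{\beta+1}$) from the ODE plus Young's inequality is sound and close to what the paper does in Lemma~5.2(i). However, the heart of your argument does not close.

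The gap is in the reduction to a scalar asymptotically-autonomous ODE for $H=\phi/y^{\beta+1}$. What the hypotheses give you is a pointwise \emph{algebraic} constraint $G^p-\tfrac12 G+\gamma H=-\eps(y)$ with $\eps(y)\to 0$, together with the kinematic identity $yH'=G-(\beta+1)H$. This is not a closed ODE for $H$. To convert it into one, you must express $G$ as a single-valued function of $H$ on the entire range visited by the trajectory, and this fails for two independent reasons. First, the map $G\mapsto G^p-\tfrac12 G$ has a fold at $G_*=(2p)^{-\beta}$; for $p>2$ (where $\gamma>0$, so the constraint forces $G^p-\tfrac12 G<0$, i.e.~$G<2^{-\beta}$) the admissible $G$-range straddles $G_*$, so there are two branches and the implicit function theorem gives no global selection. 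Second, even on a single branch the induced relation $G=g(H)+O(\eps(y))$ would need to be differentiated to obtain an ODE, and the hypothesis controls $\eps$ but not $\eps'=\tfrac{d}{dy}\bigl(\sigma\phi''/y^{\beta+1}\bigr)$; thus you cannot legitimately write $\dot H=g(H)-(\beta+1)H+o(1)$ in $\tau=\log y$. The $\omega$-limit argument you invoke (Markus/Thieme-type) requires precisely such a closed, asymptotically-autonomous ODE, so it does not bridge the gap — it presupposes what is missing.

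What is striking is that the algebra you wrote already contains the paper's actual key observation, which sidesteps all of this. At any point where $H'(y)=0$ (or merely $\psi'(y)=o(1/y)$), the kinematic identity gives $G=(\beta+1)H+o(1)$, and substituting into the constraint yields $(\beta+1)^pH^p=\beta H+o(1)$, hence $H\to L$ along such a sequence. The paper (Steps~1--3 of Lemma~5.3) builds the whole proof on this: it shows that $\psi:=\phi/y^{\beta+1}$ converges because otherwise $\liminf\psi$ and $\limsup\psi$ would both be attained along sequences of critical points, forcing both to equal $L$; and then one produces a sequence on which $\psi'=o(1/y)$, concluding $\psi\to L$. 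No implicit function theorem, Lyapunov function, or $\omega$-limit machinery is needed. I would recommend abandoning the reduction-to-scalar-ODE framing and instead pursuing your own critical-point computation directly, which is exactly what the paper does.
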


 \begin{proof}[Proof of Lemma \ref{lem3}]
 Set $\psi=\frac{\phi}{y^{\beta+1}}$.
 
 \smallskip
 
 {\bf Step 1.} 
We claim that, for any sequence $y_i\to \infty$ as $i\to \infty$, we have the implication:
 \be{7}
\psi'(y_i)=o(y_i^{-1}) \Longrightarrow \lim_{i\to \infty} \psi(y_i)=L:=\frac{p^{-\beta}}{\beta+1}.
 \ee

To this end, we compute 
 $$\psi'=\frac{\phi'}{y^{\beta+1}}-(\beta+1)\frac{\phi}{y^{\beta+2}}=\frac{y\phi'-(\beta+1)\phi}{y^{\beta+2}},$$
 hence 
 \be{8f}
 \phi(y_i)=\frac{y_i}{\beta+1}\phi'(y_i)+o\big(y_i^{\beta+1}\big).
 \ee
 Multiplying by $-\gamma$ and using \eqref{ODEsigma}, we get
 $$\sigma\phi''(y_i)=\Big(\frac12-\frac{\gamma}{\beta+1}\Big)y_i\phi'(y_i)-\phi'^p+o\big(y_i^{\beta+1}\big).$$
Dividing by $\phi'(y_i)$ and using $\frac12-\frac{\gamma}{\beta+1}=\frac{1}{p}$ and 
\eqref{hypODEsigma}, we obtain
 $$\frac{y_i}{p}-\phi'^{p-1}(y_i)=\frac{\sigma\phi''(y_i)+o\big(y_i^{\beta+1}\big)}{\phi'(y_i)}=o(y_i).$$
So we have proved that 
 $\phi'(y_i)=(p^{-\beta}+o(1)) y_i^\beta$.
 Combining with \eqref{8f} we obtain the desired result.
 
  \smallskip
  
{\bf Step 2.} We next claim that $\ell:=\lim_{y\to\infty}\psi(y)$ exists in $(0,\infty)$.
\vskip 1pt
By \eqref{hypODEsigma}, for $y$ large, we have $|\phi''|\le \frac12 {\phi'}^p$ and $\phi>0$,
and \eqref{ODEsigma} then implies
${\phi'}^{p}\le y\phi'+2|\gamma|\phi$.
By the proof of Lemma \ref{lem2}(i), it follows that $\ell_2:=\limsup_{y\to\infty}\psi(y)<\infty$.
Using  \eqref{hypODEsigma} again, we see that $\ell_1:=\liminf_{y\to\infty}\psi(y)$ 
satisfies $0<\ell_1\le \ell_2<\infty$.
Assume for contradiction that $\ell_1<\ell_2$.
Then there exist $y_i,\bar{y_i}\to \infty$ such that $\psi'(y_i)=\psi'(\bar{y_i})=0$, $\psi(y_i)\to \ell_1$ and $\psi(\bar{y_i})\to \ell_2$ as $i\to \infty$.
But Step~1 then implies $\ell_1=L$ and $\ell_2=L$: a contradiction.

  \smallskip
  
{\bf Step 3.} Conclusion.
We note that there exists a sequence $y_i\to \infty$ such that $\psi'(y_i)=o\big(\frac{1}{y_i}\big)$.
Indeed, otherwise,  there holds $|\psi'(y)|\ge \eps y^{-1}$ for all $y\ge A$, with some $\eps,A>0$.
By continuity, there exists $\sigma_0\in\{-1,1\}$ such that $\sigma_0\psi'(y)\ge \eps y^{-1}$ for all $y\ge A$,
hence $\lim_{y\to \infty}\psi(y)=\pm\infty$. But this contradicts the conclusion of~Step~2.

\vskip 1pt

Now applying Step~1 to the sequence $(y_i)_i$, we deduce $\ell=L$,
which proves \eqref{limphi0}.
 \end{proof}

\begin{proof}[Proof of Proposition \ref{equivalence}]
It is a direct consequence of  \eqref{4'}, \eqref{boundphi2sec5} and  Lemma \ref{lem3}.
\end{proof}

\section{Forward self-similar solutions: proof of Proposition~\ref{positivity1}}
  \label{SecFwd}
  
 In this section we assume $p>2$.
We now investigate the initial value following problem
\be{ODEfwd}
\begin{cases}\phi''=\gamma \phi-\frac12 y\phi'+|\phi'|^p,\quad y>0\\
 \phi(0)=0\\
\phi'(0)=\alpha>0,\end{cases}
\ee
where $\gamma=\frac{p-2}{2(p-1)}=\frac{1-\beta}{2}\in (0,\frac12)$ and $\phi$ is the profile as in section~\ref{foward0}. 
We denote by $[0,R_{max})$, with $R_{max}=R_{max}(\alpha)$, the maximal existence interval of $\phi$.
Our main aim is to prove Proposition~\ref{positivity1}.
We shall use a shooting argument based on the following two sets:
$$
J_1:=\big\{\alpha>0,\ \exists\, y\in(0,R_{\max}):\ \phi''(y)<0\big\},
\qquad 
J_2:=\big\{\alpha>0,\, R_{\max}<\infty\big\}.
$$
Although this is not used in the proof of Theorem~\ref{thmancient3}(i),
we shall also give more information on the asymptotic behavior of $\phi$ at $\infty$
and on the structure of the solutions of \eqref{ODEfwd}. Namely, we shall prove:

\begin{prop}\label{equivneg1}
There is a unique $\alpha$ such that $\phi$ enjoys the properties in Proposition~\ref{positivity1}.
Moreover, $\phi$ satisfies
\be{limphi}
\lim_{y\to \infty} \frac{\phi(y)}{y^{\beta+1}}=L:=\frac{p^{-\beta}}{\beta+1}.
\ee
Furthermore, we have 
$J_1=(0,\alpha)$ and $J_2=(\alpha,\infty)$.
\end{prop}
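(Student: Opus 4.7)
My plan is a shooting argument in $\alpha = \phi'(0) > 0$, organized around the sets $J_1$ and $J_2$ already introduced at the start of the section. I will first construct a ``neutral'' $\alpha^*$ satisfying Proposition~\ref{positivity1}, by showing that $J_1, J_2$ are open, disjoint, and cover all small (resp.\ large) $\alpha$; then derive \eqref{limphi} for any such $\alpha^*$ via Lemma~\ref{lem3}; and finally deduce uniqueness together with the interval structure $J_1 = (0,\alpha^*)$, $J_2 = (\alpha^*,\infty)$ by a connectedness argument.

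\textbf{Openness and nonemptiness.} Both $J_1, J_2$ are open by continuous dependence on initial data (applied also to $\phi''$ via \eqref{ODEfwd}). Large $\alpha$ lie in $J_2$: near $y = 0$ we have $|\phi'|^{p-1} > y$, so \eqref{ODEfwd} gives $\phi'' \ge \tfrac12 |\phi'|^p$, and comparison with $\psi' = \psi^p/2$ forces finite-time blow-up of $\phi'$. Small $\alpha$ lie in $J_1$: rescaling $\tilde\phi := \phi/\alpha$, the rescaled solutions converge on compact sets as $\alpha \to 0$ to the linear Hermite-type solution $\tilde\psi$ of $\tilde\psi'' = \gamma\tilde\psi - \tfrac{y}{2}\tilde\psi'$ with $\tilde\psi(0)=0$, $\tilde\psi'(0)=1$. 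A direct Taylor expansion gives $\tilde\psi''(0)=0$ and $\tilde\psi'''(0) = \gamma - \tfrac12 = -\tfrac{\beta}{2} < 0$, so $\tilde\psi'' < 0$ on some interval $(0,\eta)$; continuous dependence then provides points at which $\phi_\alpha'' < 0$ for all sufficiently small $\alpha$.

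\textbf{Disjointness and existence of $\alpha^*$.} This is the heart of the existence part. Given $\alpha \in J_1$, let $\bar y \in (0, R_{max})$ be the first zero of $\phi''$ (it exists by IVT since $\phi''(0)=\alpha^p>0$). Differentiating \eqref{ODEfwd} yields
\begin{equation*}
\phi''' \;=\; -\tfrac{\beta}{2}\phi' + \bigl(p|\phi'|^{p-2}\phi' - \tfrac{y}{2}\bigr)\phi'',
\end{equation*}
so at any zero of $\phi''$ with $\phi' > 0$, $\phi''' = -\tfrac{\beta}{2}\phi' < 0$: $\phi''$ can only cross zero downward. Moreover, if $\phi'$ vanished at some later point, then $\phi'' = \gamma\phi > 0$ there, incompatible with $\phi'' < 0$ just before. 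Hence $\phi'' < 0$ and $\phi' \in (0,\phi'(\bar y)]$ on $(\bar y, R_{max})$, so $\phi$ grows at most linearly, forcing $R_{max} = \infty$ and $\alpha \notin J_2$. Setting $\alpha^* := \inf J_2$, openness of $J_2$ gives $\alpha^* \notin J_2$; and if $\alpha^* \in J_1$, openness of $J_1$ combined with the disjointness $J_1 \cap J_2 = \emptyset$ would furnish a neighborhood of $\alpha^*$ free of $J_2$, contradicting $\alpha^* = \inf J_2$. Thus $\alpha^*$ is neutral: $\phi_{\alpha^*}$ is global with $\phi'' \ge 0$, so $\phi' \ge \alpha^* > 0$ and $\phi \ge \alpha^* y$, which proves Proposition~\ref{positivity1}.

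\textbf{Asymptotic, uniqueness, structure.} For any neutral $\alpha$, $\phi'' \ge 0$ and monotonicity of $\phi'$ give $\phi \le y\phi'$; substituted back into \eqref{ODEfwd} this forces $|\phi'|^{p-1} \ge \tfrac{\beta y}{2}$, i.e.\ $\phi'(y) \ge cy^\beta$. A weighted estimate on the differentiated equation, in the spirit of Lemma~\ref{lem4} (Gaussian integrating factor), then yields $\phi''(y) = o(y^{\beta+1})$, after which Lemma~\ref{lem3} with $\sigma = -1$ delivers \eqref{limphi}. For uniqueness, if $\alpha_1 < \alpha_2$ were both neutral, then $w = \phi_{\alpha_2} - \phi_{\alpha_1}$ satisfies $w(0) = 0$, $w'(0) > 0$, and at any critical point $y^* > 0$ with $w(y^*) > 0$ the equality $\phi_{\alpha_2}'(y^*) = \phi_{\alpha_1}'(y^*)$ kills the $|\phi'|^p$-difference, leaving $w''(y^*) = \gamma w(y^*) > 0$; hence every interior critical point is a strict local minimum and $w$ is strictly increasing on $(0,\infty)$. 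On the other hand, $\phi_{\alpha_i} \sim L y^{\beta+1}$ gives $w = o(y^{\beta+1})$, and an asymptotic analysis of the linear equation for $w$ (whose coefficients, governed by $\phi' \sim p^{-\beta}y^\beta$, yield fundamental solutions one of which is exponentially growing and the other decaying like $y^{\beta-1}$) then forces $w$ to decay, contradicting strict monotonicity. With uniqueness in hand, $(0,\infty) \setminus \{\alpha^*\} = J_1 \sqcup J_2$ is open, its two connected components $(0,\alpha^*)$ and $(\alpha^*,\infty)$ each lie entirely in one of $J_1, J_2$, and since $J_1$ contains small $\alpha$ and $J_2$ contains large $\alpha$, this forces $J_1 = (0,\alpha^*)$ and $J_2 = (\alpha^*,\infty)$. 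I expect the main obstacles to be the $\phi'''$-sign rigidity driving disjointness, and the asymptotic matching step that underlies uniqueness.
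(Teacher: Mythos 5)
Your first three steps (openness, nonemptiness, disjointness of $J_1,J_2$, and extracting a neutral $\alpha^*$ by connectedness) track the paper's argument closely and are correct; the derivation of $\phi'\ge cy^\beta$ from $\phi\le y\phi'$ plus convexity is in fact a slightly cleaner route to the lower bound than the paper's proof of Lemma~\ref{lemJ4}, and invoking Lemma~\ref{lem3} with $\sigma=-1$ for \eqref{limphi} is exactly what the paper does.

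The genuine gap is in your uniqueness argument. Having shown that $w=\phi_{\alpha_2}-\phi_{\alpha_1}$ is strictly increasing, you conclude from $w=o(y^{\beta+1})$, together with an asymptotic analysis of the linearized equation, that $w$ must decay. But $w'>0$ and $w=o(y^{\beta+1})$ are entirely compatible (any $w\sim y^k$ with $0<k<\beta+1$ satisfies both); the contradiction rests fully on the claim that the fundamental system of $w''+(\tfrac12 y - B(y))w'-\gamma w=0$ consists of a Gaussian-growing solution and one decaying like $y^{\beta-1}$, with no intermediate behavior. That claim is plausible at the heuristic level, but you only know $B(y)\sim y$ with no rate of convergence, so a rigorous Levinson-type asymptotic integration is not available as stated; controlling the error term in $B$ would require sharper asymptotics on $\phi'$ than \eqref{limphi} provides, and you have not supplied them. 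This step would need to be replaced or substantially elaborated.

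The paper avoids all of this with a short scaling trick: take $\alpha_1<\alpha_2$ in $J_3$, set $\delta=\sqrt{\alpha_2/\alpha_1}>1$ and $\tilde\phi=\delta\phi_1$; then $\mathcal{P}\tilde\phi\le 0=\mathcal{P}\phi_2$ while $\tilde\phi'(0)=\sqrt{\alpha_1\alpha_2}<\alpha_2$, so the elementary ODE comparison of Lemma~\ref{ODEcomp} gives $\phi_2>\delta\phi_1$ on $(0,\infty)$, which is incompatible with $\phi_1/\phi_2\to 1$ forced by the common leading asymptotics \eqref{limphi}. This uses only the comparison lemma you essentially rederived in your ``$w''(y^*)=\gamma w(y^*)>0$'' step, plus the already-established leading-order asymptotics, and bypasses any second-order asymptotic analysis. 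Your final deduction of the interval structure $J_1=(0,\alpha)$, $J_2=(\alpha,\infty)$ from connectedness is fine once uniqueness is in place.
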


We first note that, for all $\alpha>0$, we have
\be{phipos}
\phi,\ \phi'>0\quad \hbox{on $(0,R_{\max})$}
\ee
(indeed, otherwise, there would exist a minimal $y_1\in (0,R_{\max})$ such that $\phi'(y_1)=0$,
hence $\phi(y_1)>0$ and $\phi''(y_1)\le 0$, contradicting \eqref{ODEfwd}).
The proof of Proposition~\ref{positivity1} relies on the following three lemmas. 

\begin{lem}\label{lemJ1}
We have $J_1\subset \big\{\alpha>0;\ R_{\max}=\infty\big\}$.
Moreover, for all $\alpha\in J_1$ and $y>0$, if $\phi''(y)<0$ then
$\phi''<0$ on $[y,\infty)$.
\end{lem}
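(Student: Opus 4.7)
The plan is to differentiate the ODE in \eqref{ODEfwd} and run a first-crossing argument that exploits the sign of $\gamma-\tfrac12$. Since $\phi'>0$ on $(0,R_{\max})$ by \eqref{phipos}, the term $|\phi'|^{p-2}\phi'$ equals $(\phi')^{p-1}$ and is smooth there. Differentiating the ODE in $y$ and using $\gamma-\tfrac12=-\tfrac{\beta}{2}<0$ yields
\[
\phi'''=\Bigl(\gamma-\tfrac12\Bigr)\phi'+\phi''\Bigl(p(\phi')^{p-1}-\tfrac{y}{2}\Bigr)=-\tfrac{\beta}{2}\phi'+\phi''\Bigl(p(\phi')^{p-1}-\tfrac{y}{2}\Bigr).
\]

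I would first establish the second (monotone persistence) assertion and then deduce the first (global existence) from it. Fix $\alpha\in J_1$ and $y_0\in(0,R_{\max})$ with $\phi''(y_0)<0$. Suppose for contradiction that there exists a smallest $y_1\in(y_0,R_{\max})$ such that $\phi''(y_1)=0$; since $\phi''<0$ on $(y_0,y_1)$, one necessarily has $\phi'''(y_1)\ge 0$. However, evaluating the identity above at $y=y_1$ and using $\phi'(y_1)>0$ from \eqref{phipos}, the second term vanishes and one is left with $\phi'''(y_1)=-\tfrac{\beta}{2}\phi'(y_1)<0$, a contradiction. Hence $\phi''<0$ throughout $[y_0,R_{\max})$.

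To conclude that $R_{\max}=\infty$, I would observe that on $[y_0,R_{\max})$ the function $\phi'$ is strictly decreasing and positive, so $0<\phi'(y)\le\phi'(y_0)$ and consequently $\phi(y)\le\phi(y_0)+\phi'(y_0)(y-y_0)$ there. Thus $\phi$ and $\phi'$ remain bounded on every bounded subinterval of $[y_0,R_{\max})$, whence so does the right-hand side of \eqref{ODEfwd}. The standard continuation criterion for ODEs then rules out $R_{\max}<\infty$.

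No genuine obstacle is anticipated: the entire argument reduces to evaluating the $\phi'''$ identity at a hypothetical first zero of $\phi''$. The only point requiring care is that $\phi'(y_1)$ is strictly positive there, so that the sign in $\phi'''(y_1)=-\tfrac{\beta}{2}\phi'(y_1)$ is strict; this is precisely what \eqref{phipos} guarantees.
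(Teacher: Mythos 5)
Your proof is correct and follows essentially the same route as the paper: differentiate the ODE, apply a first-crossing argument at a hypothetical first zero of $\phi''$ using $\gamma-\tfrac12<0$ and $\phi'>0$, and then deduce global existence from boundedness of $\phi'$ (and hence of $\phi$) on $[y_0,R_{\max})$. The only difference is that you spell out the continuation criterion step, which the paper leaves implicit.
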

\begin{proof}
Let $\alpha\in J_1$ and $y_0\in (0, R_{\max})$ be such that $\phi''(y_0)<0$.
Assume for contradiction that there exists a minimal $y_1\in (y_0,R_{\max})$  such that $\phi''(y_1)=0$, hence $\phi'''(y_1)\ge0$. 
Differentiating \eqref{ODEfwd}, we obtain 
\be{phi3}
\phi'''=(\gamma-\ts\frac12)\phi'-\frac12 y\phi''+p\phi'^{p-1}\phi'',
\ee
hence $\phi'''(y_1)=(\gamma-\frac12)\phi'(y_1)<0$: a contradiction. Consequently, we have $\phi''<0$ in $(y_0,R_{\max})$. 
In particular, using also \eqref{phipos}, it follows that $y'$ is bounded, hence $R_{\max}=\infty$.
\end{proof}

\begin{lem}\label{lemJ2}
There exists $\alpha_1>0$ such that $(0,\alpha_1)\subset J_1$.
\end{lem}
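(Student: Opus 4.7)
The idea is that for small $\alpha$ the $|\phi'|^p$ term is negligible compared to the linear terms, so the ODE formally reduces to a linear problem whose solution has negative second derivative near $0$. Continuous dependence then transfers this to the genuine nonlinear problem.

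First I would rescale by setting $\Phi:=\phi/\alpha$, so that $\Phi$ solves
\[
\Phi''=\gamma\Phi-\tfrac12 y\Phi'+\alpha^{p-1}|\Phi'|^p,\quad \Phi(0)=0,\ \Phi'(0)=1.
\]
As $\alpha\to 0^+$, the nonlinear term $\alpha^{p-1}|\Phi'|^p$ formally vanishes and the natural limit problem is the \emph{linear} ODE
\[
\Phi_0''=\gamma\Phi_0-\tfrac12 y\Phi_0',\quad \Phi_0(0)=0,\ \Phi_0'(0)=1,
\]
which admits a unique global $C^\infty$ solution on $[0,\infty)$. The key observation is that by Taylor expansion at $0$, the linear ODE and initial data give $\Phi_0''(0)=0$ and, after differentiating once, $\Phi_0'''(0)=(\gamma-\tfrac12)\Phi_0'(0)=-\beta/2<0$; hence there exists $y_0>0$ with
\[
\Phi_0''(y_0)<0.
\]

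Next I would transfer this to the perturbed problem on the fixed interval $[0,y_0]$. Using a short bootstrap: as long as $|\Phi'|\le 2$ on $[0,y]$, one has $|\Phi|\le 2y$ and the ODE yields $|\Phi''|\le C(y_0)+2^p\alpha^{p-1}$ on $[0,y]$, hence $|\Phi'-1|\le C(y_0)y+2^p\alpha^{p-1}y$. For $\alpha$ small enough this stays $\le 1/2$ on $[0,y_0]$, closing the bootstrap and providing a uniform $C^1$ bound for $\Phi_\alpha$ on $[0,y_0]$, along with the fact that $R_{\max}(\alpha)>y_0$. Standard continuous dependence on the perturbation (applied to the affine linear ODE $\Phi''-\gamma\Phi+\tfrac12 y\Phi'=\alpha^{p-1}|\Phi'_\alpha|^p$ with source term of $L^\infty$-norm $O(\alpha^{p-1})$) then yields $\Phi_\alpha\to\Phi_0$ in $C^2([0,y_0])$ as $\alpha\to 0^+$. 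In particular $\Phi_\alpha''(y_0)<0$ for all sufficiently small $\alpha$, i.e.\ $\phi''(y_0)=\alpha\Phi_\alpha''(y_0)<0$, which means $\alpha\in J_1$. Choosing $\alpha_1>0$ accordingly gives $(0,\alpha_1)\subset J_1$.

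The only non-routine step is the bootstrap/continuous-dependence argument guaranteeing $R_{\max}(\alpha)>y_0$ and $C^2$-convergence on $[0,y_0]$; once that is in place, the sign of $\Phi_0''(y_0)$ propagates immediately. All other ingredients (the Taylor computation and the scaling) are elementary.
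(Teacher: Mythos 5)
Your route is genuinely different from the paper's. You rescale $\Phi=\phi/\alpha$, observe that as $\alpha\to 0^+$ the ODE formally linearizes, compute $\Phi_0''(0)=0$ and $\Phi_0'''(0)=\gamma-\frac12=-\beta/2<0$ for the limit problem, and then transfer the sign of $\Phi_0''$ to $\Phi_\alpha''$ via a bootstrap plus continuous dependence. The paper never passes to a limit problem: it works directly with $\phi$ and derives a contradiction from explicit quantitative inequalities, with all constants ($\eps$, $y_1$, $\alpha_1$) written out in closed form. The algebraic heart is nonetheless the same in both arguments: the negativity of $\gamma-\frac12$ (equivalently, of $(\gamma(1+\eps)-\frac12)$ for small $\eps$), which is what ultimately forces $\phi''<0$ somewhere near the origin. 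Your version is more conceptual and generalizes more readily; the paper's version is self-contained, elementary, and quantitative, producing an explicit $\alpha_1(p)$.

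There is one genuine gap in the bootstrap as you wrote it. From the ODE and the hypothesis $|\Phi'|\le 2$ you get $|\Phi''|\le C(y_0)+2^p\alpha^{p-1}$ with $C(y_0)$ of order $y_0$ (specifically $(2\gamma+1)y_0$), hence after integration $|\Phi'-1|\le C(y_0)y+2^p\alpha^{p-1}y$. For this to stay $\le\frac12$ on $[0,y_0]$ you need $C(y_0)\,y_0+2^p\alpha^{p-1}y_0\le\frac12$, and the term $C(y_0)y_0\sim(2\gamma+1)y_0^2$ cannot be made small by shrinking $\alpha$; it is a constraint on $y_0$. Since you fixed $y_0$ only by the condition $\Phi_0''(y_0)<0$, nothing guarantees $(2\gamma+1)y_0^2<\frac12$, and the claim ``for $\alpha$ small enough this stays $\le\frac12$'' does not follow. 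The fix is immediate and should be stated: because $\Phi_0''(0)=0$ and $\Phi_0'''(0)<0$, one has $\Phi_0''<0$ on a whole interval $(0,\delta)$, so you are free to pick $y_0\in(0,\delta)$ small enough that $(2\gamma+1)y_0^2<\frac14$ (say); then for $\alpha$ small enough the bootstrap closes, and the rest of your continuous-dependence argument (writing $\Phi_\alpha-\Phi_0$ as the solution of the linear ODE with source $\alpha^{p-1}|\Phi_\alpha'|^p$ of size $O(\alpha^{p-1})$ and zero initial data) goes through to give $\Phi_\alpha''(y_0)<0$, hence $\alpha\in J_1$.
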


\begin{proof}   
Let $\eps=\frac{1}{2(p-2)}$.
Set $\alpha_2:=(1+\eps)^{-p\beta}$ and let $\alpha\in(0,\alpha_2)$. We claim that 
     \be{barR1} 
R_{max}>y_1=y_1(p):=\min\big(1, [\gamma (1+\eps)+1]^{-1}\eps\big)  \ \hbox{ and }\ 
    \phi'<(1+\eps)\alpha \ \hbox {on $(0,y_1]$.}
     \ee
     Indeed, otherwise, recalling \eqref{phipos}, there is a minimal $y_0\in(0,R_{max})$ with $y_0\le y_1$,  such that $\phi'(y_0)=(1+\eps)\alpha$.
     Then $\phi'\le(1+\eps)\alpha $ and $\phi\le(1+\eps)\alpha y$ on $[0,y_0]$ hence, by \eqref{ODEfwd},
     $$\phi''\le\gamma (1+\eps)\alpha y+(1+\eps)^p\alpha^p<[\gamma (1+\eps)+1]\alpha
     \quad\hbox{on $(0,y_0)$.}$$
 By integration, using $\phi'(0)=\alpha$, we deduce $(1+\eps)\alpha=\phi'(y_0)<\alpha+[\gamma (1+\eps)+1]\alpha y_0$,
 hence $y_0> [1+\gamma (1+\eps)]^{-1}\eps$: a contradiction. The claim follows.

Next set $\alpha_1:=\min\big\{\alpha_2,(\frac{\beta}{8}y_1(1+\eps)^{-p})^\beta\big\}$
and let $\alpha\in(0,\alpha_1)$.
We claim that there exists $y_2\in(0,y_1]$ such that $\phi''(y_2)<0$, which will prove the lemma.
Assume for contradiction that $\phi''\ge 0$ on $(0,y_1]$.
Then $\alpha\le\phi'\le(1+\eps)\alpha$ on $[0,y_1]$ in view of \eqref{barR1}.
Using \eqref{ODEfwd} we obtain
 $$
 \phi''(y_1)\le \alpha \big(\gamma(1+\eps)-\ts\frac12\big)y_1+(1+\eps)^p\alpha^p
 = \alpha \big[-\ts\frac{\beta}{4}y_1+(1+\eps)^p\alpha^{p-1}\big]<0:
$$
a contradiction.
\end{proof}

\begin{lem}
We have
\be{deftildeJ}
J_2=\big\{\alpha>0,\ \exists\ y\in[0,R_{\max}):\, {\phi'}^{p-1}(y)>y+1\big\}.
\ee
In particular, $(1,\infty)\subset J_2$.
\end{lem}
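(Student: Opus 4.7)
The plan is to establish the equality \eqref{deftildeJ} by proving both inclusions separately, and then to deduce $(1,\infty)\subset J_2$ as an immediate corollary.

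First I would prove the inclusion $\supset$. Fix $\alpha>0$ and suppose there exists $y_0\in[0,R_{\max})$ such that ${\phi'}^{p-1}(y_0)>y_0+1$. The main step is to show that the quantity $F(y):={\phi'}^{p-1}(y)-(y+1)$ remains strictly positive for all $y\in[y_0,R_{\max})$. To see this, suppose for contradiction that there is a minimal $y_1>y_0$ with $F(y_1)=0$, so that $F'(y_1)\leq 0$ and ${\phi'}^{p-1}(y_1)=y_1+1\geq 1$. Using the ODE \eqref{ODEfwd} and \eqref{phipos}, one computes
\[
\phi''(y_1)=\gamma\phi(y_1)+\phi'(y_1)\bigl({\phi'}^{p-1}(y_1)-\ts\frac{y_1}{2}\bigr)\geq \phi'(y_1)\bigl(\ts\frac{y_1}{2}+1\bigr)>0,
\]
so
\[
F'(y_1)=(p-1){\phi'}^{p-2}(y_1)\phi''(y_1)-1\geq (p-1)(y_1+1)\bigl(\ts\frac{y_1}{2}+1\bigr)-1\geq p-2>0,
\]
using $p>2$. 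This contradicts $F'(y_1)\leq 0$, so indeed $F>0$ throughout $[y_0,R_{\max})$. Then on this interval ${\phi'}^p>y\phi'$, hence by \eqref{ODEfwd} and \eqref{phipos},
\[
\phi''\geq {\phi'}^p-\ts\frac{y}{2}\phi'\geq \ts\frac12{\phi'}^p.
\]
Dividing by ${\phi'}^p$ and integrating (or directly studying $({\phi'}^{-(p-1)})'\leq -\tfrac{p-1}{2}$) shows $\phi'$ blows up in finite time, i.e.~$R_{\max}<\infty$, so $\alpha\in J_2$.

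Next I would prove the reverse inclusion $\subset$. Let $\alpha\in J_2$, so $R_{\max}<\infty$. I claim $\phi'$ is unbounded on $[0,R_{\max})$: otherwise $\phi$ would also be bounded on this finite interval, so by \eqref{ODEfwd} $\phi''$ would be bounded, and a standard ODE continuation argument would yield an extension beyond $R_{\max}$, contradicting its maximality. Thus there exists a sequence $y_k\to R_{\max}^-$ with $\phi'(y_k)\to\infty$, and in particular ${\phi'}^{p-1}(y_k)>y_k+1\leq R_{\max}+1$ for $k$ large. This yields the desired $y\in[0,R_{\max})$, establishing \eqref{deftildeJ}.

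Finally, for $\alpha>1$ the condition in \eqref{deftildeJ} is satisfied at $y=0$: indeed, since $p>2$ implies $p-1>1$, we have ${\phi'}^{p-1}(0)=\alpha^{p-1}>1=0+1$. Hence $\alpha\in J_2$, giving $(1,\infty)\subset J_2$. The main subtlety in this proof is ensuring that the condition ${\phi'}^{p-1}>y+1$ is truly a trapping region that forces blow-up — this is where the hypothesis $p>2$ enters decisively, via the clean lower bound $F'\geq p-2>0$ on the putative re-entry boundary. For $p\le 2$ the analogous argument would require additional care using the $\gamma\phi$ term, but here it is not needed.
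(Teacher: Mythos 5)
Your proof is correct and follows essentially the same approach as the paper. The only cosmetic difference is that you work with $F(y)={\phi'}^{p-1}(y)-(y+1)$ and run a first-exit-point argument showing $F'(y_1)>0$ at a putative re-entry point, whereas the paper works directly with $h(y)={\phi'}^p(y)-(y+1)\phi'(y)=\phi'(y)F(y)$ and shows $h'>0$ on $\{h>0\}$; both are the same forward-invariance argument, and the subsequent blow-up step via $\phi''\ge\tfrac12{\phi'}^p$ and the reverse inclusion via unboundedness of $\phi'$ near $R_{\max}$ match the paper's.
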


\begin{proof}
Denote $\tilde J$ the set on the right hand set of \eqref{deftildeJ}.
If $\alpha\in J_2$, then $R_{\max}<\infty$ by definition, hence $\limsup_{y\to R_{\max}} \phi'(y)=\infty$ in view of \eqref{phipos}.
Therefore, $\alpha\in\tilde J$.

Conversely, let $\alpha\in\tilde J$.
Set $h={\phi'}^p-(y+1)\phi'$. For any $y\in(0,R_{\max})$ such that $h(y)>0$, we have $\phi'(y)>1$ hence, by \eqref{ODEfwd},
$$h'=[p{\phi'}^{p-1}-(y+1)]\phi''-\phi' \ge (p-1){\phi'}^{p-1}({\phi'}^p-\ts\frac12 y\phi')-\phi'
\ge {\phi'}^p-\phi'>0.$$
Consequently, $h>0$ on $[y,R_{\max})$. Using \eqref{ODEfwd} again, it follows that $\phi''>\frac12{\phi'}^p$ on $[y,R_{\max})$, 
which implies $R_{\max}<\infty$, so that $\alpha\in J_2$.
\end{proof}

Set $J_3:=(0,\infty)\setminus(J_1\cup J_2)$. We can now give the:

\begin{proof}[Proof of Proposition~\ref{positivity1}]
By the definition of $J_1, J_2$, we have
\be{propJ3}
J_3\subset \big\{\alpha>0,\ R_{\max}=\infty \ \hbox{ and }\ \forall\,y\in[0,\infty),\,\phi''(y)\ge 0\big\}.
\ee
By Lemma~\ref{lemJ1}, we have $J_1\cap J_2=\emptyset$. 
Also, by the definition of $J_1$, \eqref{deftildeJ} and continuous dependence, $J_1$ and $J_2$ are open subsets of $(0,\infty)$. Since $(0,\infty)$ is connected, it follows that $J_3\ne\emptyset$.
Picking $\alpha\in J_3$, we have $\phi'(y)\ge \phi'(0)=\alpha$ for all $y>0$ owing to \eqref{propJ3} and $\phi$
has all the desired properties.
\end{proof}

We now proceed with the proof of Proposition \ref{equivneg1}.
In view of applying Lemma \ref{lem3}, we 
derive the following asymptotic estimates on $\phi',\phi''$.

\begin{lem}\label{lemJ4}
For all $\alpha\in J_3$, we have 
\be{limsupZA1}
\ts\frac{\beta}{2} y\le \phi'^{p-1}\le y+1,\quad y>0.
\ee
\end{lem}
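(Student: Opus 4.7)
My plan is to handle the two inequalities in \eqref{limsupZA1} separately, exploiting the two defining features of $\alpha\in J_3$ recorded in \eqref{propJ3}: namely $R_{\max}=\infty$ and $\phi''\ge 0$ on $[0,\infty)$.

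The upper bound $\phi'^{p-1}\le y+1$ is essentially a tautology given the set-up. By definition of $J_3$ we have $\alpha\notin J_2$, and the characterization \eqref{deftildeJ} of $J_2$ says exactly that $\phi'^{p-1}(y)\le y+1$ holds for every $y$ in the maximal interval of existence. Since $R_{\max}=\infty$ for $\alpha\in J_3$, this covers all $y>0$ at once.

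For the lower bound $\phi'^{p-1}\ge \tfrac{\beta}{2}y$ I would exploit the ODE
\[
\phi'^p=\phi''+\tfrac12 y\phi'-\gamma\phi,
\]
together with two elementary consequences of $\phi''\ge 0$. First, dropping the nonnegative term $\phi''$ yields $\phi'^p\ge \tfrac12 y\phi'-\gamma\phi$. Second, $\phi''\ge 0$ forces $\phi'$ to be nondecreasing on $[0,\infty)$, whence the convexity bound
\[
\phi(y)=\int_0^y\phi'(s)\,ds\le y\,\phi'(y).
\]
Plugging this into the previous inequality and using the identity $\tfrac12-\gamma=\tfrac12-\tfrac{1-\beta}{2}=\tfrac{\beta}{2}$, the right-hand side becomes $\tfrac12 y\phi'-\gamma y\phi'=\tfrac{\beta}{2} y\phi'$. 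Dividing through by $\phi'>0$ (valid thanks to \eqref{phipos}) gives exactly $\phi'^{p-1}(y)\ge \tfrac{\beta}{2} y$, as required.

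There is no genuine obstacle here: the proof is a short consequence of the ODE together with the convexity estimate $\phi\le y\phi'$, which in turn comes for free from $\phi''\ge 0$. The only point worth flagging is that both bounds rely crucially on having already established in earlier lemmas the dichotomy $J_1\cap J_2=\emptyset$ and the characterization \eqref{deftildeJ}, so that $\alpha\in J_3$ really does deliver both $R_{\max}=\infty$ and global nonnegativity of $\phi''$.
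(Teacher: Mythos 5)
Your proof is correct, and the treatment of the lower bound is genuinely \emph{different} from (and noticeably shorter than) the paper's. The paper establishes the lower bound indirectly: it introduces $Z(y)=y^{-1}{\phi'}^{p-1}$ and $h=\phi''/\phi'$, derives from the ODE the differential inequality $Z'/(yZ)\le (p-1)(Z-\lambda)$ with $\lambda=\beta/2$, and then argues by contradiction that $Z$ cannot dip below $\lambda$ (otherwise $Z$ would decay like a Gaussian while it is bounded below by $\alpha^{p-1}y^{-1}$). Your argument bypasses all of this: since $\phi''\ge 0$ and $\phi(0)=0$ give the convexity bound $\phi\le y\phi'$, and since $\gamma=\tfrac{1-\beta}{2}>0$ for $p>2$, you get directly from $|\phi'|^p=\phi''+\tfrac12 y\phi'-\gamma\phi\ge (\tfrac12-\gamma)y\phi'=\tfrac{\beta}{2}y\phi'$, and dividing by $\phi'>0$ gives the claim. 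This is cleaner and avoids the auxiliary differential inequality for $Z$ entirely; the only minor imprecision in your write-up is the phrase ``the right-hand side becomes'' --- more accurately, $-\gamma\phi\ge -\gamma y\phi'$ (valid since $\gamma>0$) pushes the right-hand side down to the stated expression, which is still a lower bound. Your handling of the upper bound (via \eqref{deftildeJ} and $R_{\max}=\infty$) is the same as the paper's.
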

\begin{proof}
The upper part follows directly from \eqref{deftildeJ}.
To derive the lower part we set $\lambda:=\beta/2$, $Z(y)=y^{-1}\phi'^{p-1}$, $h=\phi''/\phi'$
and claim that
 \be{estZA1} 
\frac{Z'}{yZ}\le (p-1)(Z-\lambda).
 \ee
To this end we compute
$$
(yZ-h)'=\Big( \frac{\phi'^p-\phi''}{\phi'}\Big)'=\Big(\frac{y}{2}-\gamma \frac{\phi}{\phi'} \Big)'\ge \frac12 -\gamma=\lambda,
$$
hence $h\le y(Z-\lambda)$ by integration. Next, writing 
 $yZ'=(yZ)'-Z\le(\phi'^{p-1})'=(p-1)yZh$ and dividing by $y^2Z$, we obtain
$\frac{Z'}{yZ}\le (p-1)\frac{h}{y}$, hence \eqref{estZA1}.

 Now assume for contradiction that the lower part of \eqref{limsupZA1} fails. 
 Then there exists $y_0>0$ such that $Z(y_0)<\lambda$.
 It follows from \eqref{estZA1} that, for all $y\ge y_0$,  $Z'(y)<0$
 hence $Z(y)\le Z(y_0)$, and then that $\frac{Z'(y)}{yZ(y)}\le -\eta$ for some $\eta>0$.
 Upon integration we get $Z(y)\le Ce^{-\eta y^2/2}$.
 But since $Z=y^{-1}\phi'^{p-1}\ge \alpha^{p-1}y^{-1}$ owing to \eqref{propJ3}, this is a contradiction.
\end{proof}

\begin{lem}\label{lem4f}
For all $\alpha\in J_3$, there exists $C>0$ such that 
\be{boundphi2}
0\le\phi''\le Cy^{\beta-1},\quad y\ge 1.
\ee
\end{lem}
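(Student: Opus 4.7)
The inequality $\phi'' \ge 0$ is already part of \eqref{propJ3}, so only the upper bound $\phi'' \le Cy^{\beta-1}$ needs proof. My plan is to adapt the integrating-factor argument from the proof of Lemma \ref{lem4}: differentiate \eqref{ODEfwd}, exploit the algebraic identity $p\beta = 1+\beta$ (equivalent to $\beta = 1/(p-1)$) to obtain a clean positive lower bound on the coefficient of $\phi''$, and then extract the decay through a Gaussian change of variable. The new feature with respect to the backward case of Lemma \ref{lem4} is that the sign in front of $\phi'$ flips, so the integrating factor $\mu$ becomes a super-Gaussian decaying weight rather than a growing one.

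First, differentiating \eqref{ODEfwd} yields
$$\phi'''(y) = \bigl(\gamma - \tfrac12\bigr)\phi'(y) + k(y)\phi''(y), \qquad k(y) := p\phi'(y)^{p-1} - \tfrac{y}{2},$$
with $\gamma - \tfrac12 = -\tfrac{\beta}{2}$. Using the lower bound $\phi'^{p-1}(y) \ge \tfrac{\beta}{2}y$ from Lemma \ref{lemJ4} together with $p\beta = 1+\beta$, one gets $k(y) \ge \tfrac{\beta}{2}y$ on $(0,\infty)$. Setting $\mu(y) := \exp\bigl(-\int_1^y k(s)\,ds\bigr)$, the identity rewrites as
$$(\mu\phi'')' = -\tfrac{\beta}{2}\,\mu\phi' \le 0,$$
so $\mu\phi'' \ge 0$ is nonincreasing on $[1,\infty)$ and admits a limit $\ell \ge 0$ at infinity.

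The key step is to rule out $\ell > 0$. Indeed, otherwise $\phi''(y) \ge \ell\mu(y)^{-1} \ge c\,e^{\beta(y^2-1)/4}$ for $y$ large, and integrating would force $\phi'(y)$ to grow super-polynomially, contradicting the upper bound $\phi'(y)^{p-1} \le y+1$ also supplied by Lemma \ref{lemJ4}. Having $\ell = 0$, I integrate $(\mu\phi'')' = -\tfrac{\beta}{2}\mu\phi'$ from $y$ to $\infty$ to obtain the representation
$$\phi''(y) = \frac{\beta}{2}\int_y^\infty \phi'(s)\exp\Bigl(-\int_y^s k(\tau)\,d\tau\Bigr)\,ds, \qquad y \ge 1.$$

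Finally, I insert $k(\tau) \ge \tfrac{\beta}{2}\tau$ and $\phi'(s) \le Cs^\beta$ (both from Lemma \ref{lemJ4}) and perform the change of variable $u = \tfrac{\beta}{4}(s^2 - y^2)$, $ds = \tfrac{2}{\beta s}\,du$. Since $s = \sqrt{y^2 + 4u/\beta} \ge y$ and $\beta - 1 < 0$, this gives $s^{\beta-1} \le y^{\beta-1}$, whence
$$\phi''(y) \le \frac{C}{\beta}\int_0^\infty s^{\beta-1}e^{-u}\,du \le Cy^{\beta-1}\int_0^\infty e^{-u}\,du = Cy^{\beta-1}.$$
The main obstacle I anticipate is discarding the case $\ell > 0$: it is the only non-algebraic step in the argument and it requires using both directions of the bounds in Lemma \ref{lemJ4} simultaneously; once this is settled, the Gaussian change of variable mechanically delivers the sharp polynomial decay $y^{\beta-1}$.
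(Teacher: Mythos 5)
Your proof is correct and follows essentially the same route as the paper's: both differentiate \eqref{ODEfwd}, use the two-sided bound on $\phi'^{p-1}$ from Lemma~\ref{lemJ4} together with the identity $p\beta=1+\beta$, rule out a nonzero limit of the weighted quantity at infinity via the polynomial upper bound on $\phi'$, and extract the $y^{\beta-1}$ decay through a Gaussian-weighted integral. Your exact integral representation via the integrating factor $\mu(y)=\exp\bigl(-\int_1^y k\bigr)$ is simply a slightly more explicit form of the paper's differential inequality $H'\ge 0$ for the auxiliary function $H(y)=\phi''e^{-\beta y^2/4}-C\Lambda(y)$.
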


 \begin{proof}[Proof of Lemma \ref{lem4f}]
By \eqref{ODEfwd}, the upper part of \eqref{limsupZA1} and $\gamma-\frac12<0$, 
there exists $C>0$ such that
\be{phi3a}
 \phi'''+\big(\ts\frac12 y-p\phi'^{p-1}\big)\phi''=\big(\gamma-\frac12\big)\phi'
 \ge -C y^\beta,\quad y\ge 1.
 \ee
 Also, by the lower part of \eqref{limsupZA1}, we have 
 $p\phi'^{p-1}\ge \frac{\beta+1}{2}y$, hence 
\be{phi3b}
-\ts\frac{\beta}{2} y\ge  \frac12 y- p\phi'^{p-1}.
\ee
 Set 
\be{phi3c}
 \begin{aligned}
 &H(y):=\phi''e^{-ky^2}-C\Lambda(y),
\quad\hbox{where 
 $k=\frac{\beta}{4}$ and } \\
 &\Lambda(y)=\int_y^\infty s^\beta e^{-ks^2}ds\sim \tilde Cy^{\beta-1} e^{-ky^2},\ y\to\infty.
  \end{aligned}
  \ee
 Recalling $\phi''\ge 0$ (cf.~\eqref{propJ3}),
 it follows from \eqref{phi3a}, \eqref{phi3b} that
 $H'(y)=\big(\phi'''-\ts\frac{\beta}{2}y\phi''+Cy^\beta\big)e^{-ky^2} \ge 0$ for $y\ge 1$.
 Consequently, the limit $\ell:=\lim_{y\to\infty}  H(y)=\lim_{y\to\infty}  \phi''e^{-ky^2}$
 exists in $[0,\infty]$. But, in view of the upper part of \eqref{limsupZA1}, we necessarily have $\ell=0$.
We deduce that $H\le 0$ in $[1,\infty)$ and the conclusion follows from \eqref{phi3c}.
  \end{proof}
 
In view of the uniqueness proof, we also need the following ODE comparison principle.
Note that it in particular implies that the solutions of \eqref{ODEfwd} are ordered as a function of $\alpha>0$.

\begin{lem}\label{ODEcomp}
Let $\mathcal{P}\phi:=\phi''-\gamma \phi+\frac12 y\phi'-|\phi'|^p$, $R>0$ and $\phi_1, \phi_2\in C^2([0,R])$.
Then
$$
\left.\begin{aligned}
&\mathcal{P}\phi_1\ge \mathcal{P}\phi_2,\quad y\in(0,R],\\
&\phi_1(0)\ge \phi_2(0),\\
&\phi'_1(0)>\phi'_2(0)
\end{aligned}
\ \right\}
\Longrightarrow
 \big(\phi_1>\phi_2\ \hbox{ and } \ \phi'_1> \phi'_2\big) \ \hbox{ in $(0,R]$.}
$$
\end{lem}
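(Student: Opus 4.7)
The plan is to argue by contradiction on the first crossing point of the derivatives, exploiting the sign of $\gamma = (p-2)/(2(p-1))>0$ (which is in force throughout Section~\ref{SecFwd}).

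First I would set $g := \phi_1' - \phi_2'$ and $w := \phi_1 - \phi_2$. By the hypothesis $g(0)>0$ and continuity, $g>0$ on a nonempty initial interval. Suppose for contradiction that
\[
y_0 := \inf\{y\in(0,R];\ g(y)=0\}
\]
is finite. Then $g>0$ on $[0,y_0)$ (using $g(0)>0$) and $g(y_0)=0$. By integration from $0$, $w(y)=w(0)+\int_0^y g(s)\,ds$ is strictly increasing on $[0,y_0]$, and since $w(0)\ge 0$ we obtain $w(y_0)>0$.

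Next I would exploit the differential inequality $\mathcal{P}\phi_1\ge \mathcal{P}\phi_2$, which rewrites as
\[
w'' \ge \gamma w - \tfrac12 y\, g + \bigl(|\phi_1'|^p-|\phi_2'|^p\bigr) \quad\text{on } (0,R].
\]
Evaluating at $y=y_0$, the last two terms vanish (since $g(y_0)=0$ forces $\phi_1'(y_0)=\phi_2'(y_0)$, hence also $|\phi_1'(y_0)|^p=|\phi_2'(y_0)|^p$), so
\[
g'(y_0)=w''(y_0)\;\ge\;\gamma\, w(y_0)\;>\;0,
\]
using $\gamma>0$ (this is the crucial point where we use the sign of $\gamma$ guaranteed by $p>2$). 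On the other hand, $g>0$ on $[0,y_0)$ together with $g(y_0)=0$ forces $g'(y_0)\le 0$, a contradiction. Hence $g>0$ on the whole interval $(0,R]$, i.e.\ $\phi_1'>\phi_2'$. Integrating once more and using $\phi_1(0)\ge \phi_2(0)$ yields $\phi_1>\phi_2$ on $(0,R]$, completing the proof.

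The only conceptual subtlety is the step $g'(y_0)\ge \gamma w(y_0)>0$: one must check both that the $|\phi'|^p$ terms cancel exactly at $y_0$ (automatic since $g(y_0)=0$) and that the sign of $\gamma$ points the right way. This is why the statement is placed in Section~\ref{SecFwd} where $p>2$ ensures $\gamma>0$; no further structural input (such as monotonicity or sign of $\phi_i$) is needed, and the $|\phi'|^p$ nonlinearity does not cause any trouble because it is evaluated only where the two derivatives coincide.
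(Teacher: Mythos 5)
Your proof is correct and follows exactly the same strategy as the paper: argue by contradiction at the first zero $y_0$ of $\phi_1'-\phi_2'$, note that the terms involving $\phi'$ and $|\phi'|^p$ cancel there, and use $\gamma>0$ together with $\phi_1(y_0)-\phi_2(y_0)>0$ to force $(\phi_1'-\phi_2')'(y_0)>0$, contradicting minimality. The only difference is cosmetic (you keep the term $\phi_1(0)-\phi_2(0)\ge 0$ explicit when showing $w(y_0)>0$, while the paper drops it directly to $\int_0^{y_0}H$).
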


\begin{proof}
Let $H:=\phi'_1-\phi_2'$. We have $H(0)>0$. If the conclusion fails, then there is a minimal $y_0\in(0,R]$ such that $H(y_0)=0$, so $H'(y_0)\le0$. Using $\mathcal{P}\phi_1 \ge \mathcal{P}\phi_2$ and $\phi'_1(y_0)=\phi'_2(y_0)$, we get
$$
H'(y_0)=\phi''_1(y_0)-\phi''_2(y_0)\ge \gamma(\phi_1(y_0)-\phi_2(y_0))=\gamma\int_0^{y_0}H(s)ds>0:
$$
a contradiction. 
\end{proof}

 \begin{proof}[Proof of Proposition \ref{equivneg1}]
It follows from  \eqref{limsupZA1}, \eqref{boundphi2} and Lemma \ref{lem3} that property \eqref{limphi} holds for any $\alpha\in J_3$.
To show the uniqueness  
of $\alpha$, assume for contradiction that $J_3$ contains at least two elements $\alpha_2>\alpha_1>0$
and denote by $\phi_1, \phi_2$ the corresponding solutions of \eqref{ODEfwd}.
Let $\delta:=\sqrt{\alpha_2/\alpha_1}>1$ and $\tilde\phi:=\delta\phi_1$.
We have 
$$\mathcal{P}\tilde\phi=\delta \big(\phi_1''-\gamma \phi_1+\ts\frac12 y\phi_1'-\delta^{p-1}{\phi'_1}^p\big)\le 0=
\mathcal{P}\phi_2\quad\hbox{in $(0,\infty)$,}$$
with $\tilde\phi(0)=\phi_2(0)=0$ and $\tilde\phi'(0)=\sqrt{\alpha_1\alpha_2}<\alpha_2=\phi'_2(0)$.
It follows from Lemma~\ref{ODEcomp} that $\phi_2>\delta\phi_1$ on $(0,\infty)$.
But since $\phi_1, \phi_2$ satisty \eqref{limphi} by Step~1, we have $\lim_{y\to \infty} [\phi_1/\phi_2](y)=1$: a contradiction.
 \end{proof}
 
 \section{Appendix}

We here collect some well-known auxiliary results used throughout the paper.
 
  \begin{lem}(Blowup and decay differential inequalities.)\label{blowupineq}
  Let $\gamma>1$, $ k>0$, $A\ge 0$, $I:=(t_0,t_1)$ and $Y\in W^{1,1}(I)$. 
  \smallskip
  
 (i) If 
  $$Y'(t)\ge k |Y(t)|^\gamma-A\quad \hbox{a.e.~in $I$}, $$
  then 
  $$Y(t)\le C(\gamma)[k(t_1-t)]^{-1/(\gamma-1)}+(2A/k)^{1/\gamma}\quad \hbox{for all } t\in I.$$
  
(ii) If 
  $$Y'(t)\le-  k|Y(t)|^\gamma+A\quad \hbox{a.e.~in $I$}, $$
  then 
  $$Y(t)\le C(\gamma [k(t-t_0)]^{-1/(\gamma-1)}+(2A/k)^{1/\gamma}\quad \hbox{for all } t\in I.$$
  \end{lem}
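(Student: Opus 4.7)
The plan is to prove both parts by a standard bootstrap/comparison argument: in the regime where $|Y|$ exceeds the threshold $(2A/k)^{1/\gamma}$, the source term $A$ is absorbed into half the nonlinear term, and one then solves (or rather, integrates) the resulting Bernoulli-type differential inequality by separation of variables.

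For part (i), I would fix $t \in I$ and dispose of the trivial case $Y(t) \le (2A/k)^{1/\gamma}$ immediately. So assume $Y(t) > (2A/k)^{1/\gamma}$. Let $(a,b)$ be the connected component of the open set $\{s \in I : Y(s) > (2A/k)^{1/\gamma}\}$ containing $t$. On $(a,b)$ one has $k|Y|^\gamma \ge 2A$, so the hypothesis forces $Y' \ge (k/2)|Y|^\gamma \ge 0$; hence $Y$ is non-decreasing on $(a,b)$, which prevents $Y$ from dropping back down to the threshold. Therefore the right endpoint $b$ equals $t_1$, and on $[t,t_1)$ we have $Y>0$ together with $Y' \ge (k/2) Y^\gamma$. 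Separation of variables on $[t,s]$ gives
\[
\frac{1}{\gamma-1}\Bigl[Y(t)^{-(\gamma-1)} - Y(s)^{-(\gamma-1)}\Bigr] \ge \frac{k}{2}(s-t),
\]
and dropping the nonpositive term $-Y(s)^{-(\gamma-1)}$ and letting $s\to t_1^-$ yields $Y(t) \le C(\gamma)[k(t_1-t)]^{-1/(\gamma-1)}$ with $C(\gamma)=(2/(\gamma-1))^{1/(\gamma-1)}$. Combining the two cases gives the claimed bound.

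For part (ii) the argument is symmetric but runs backwards in time. Fix $t\in I$ and again assume $Y(t)>(2A/k)^{1/\gamma}$ (otherwise nothing to prove, since only an upper bound is sought). On the connected component $(a,b)\ni t$ of $\{Y>(2A/k)^{1/\gamma}\}$, the hypothesis now yields $Y' \le -(k/2)Y^\gamma <0$, so $Y$ is strictly decreasing; going backwards in time from $t$, $Y(s)>Y(t)>(2A/k)^{1/\gamma}$ for $s<t$ in $(a,b)$, which forces $a=t_0$. Separation of variables on $[s,t]\subset(t_0,t]$ produces
\[
\frac{1}{\gamma-1}\Bigl[Y(t)^{-(\gamma-1)} - Y(s)^{-(\gamma-1)}\Bigr] \ge \frac{k}{2}(t-s),
\]
and letting $s\to t_0^+$ (again discarding the nonpositive term) gives the bound $Y(t)\le C(\gamma)[k(t-t_0)]^{-1/(\gamma-1)}$.

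The only technical point worth checking carefully is that the $W^{1,1}$-regularity is enough to legitimately apply the Fundamental Theorem of Calculus along $[t,s]$ (or $[s,t]$): this is standard, since $W^{1,1}(I)$ functions are absolutely continuous representatives and the a.e.\ differential inequality integrates to the corresponding integral inequality. The rest is routine Bernoulli calculus; no real obstacle arises.
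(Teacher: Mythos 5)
Your proof is correct. The paper states this lemma in the appendix among ``well-known auxiliary results'' and does not supply a proof, so there is nothing to compare it with; your comparison/absorption argument (split at the threshold $(2A/k)^{1/\gamma}$, absorb the source into half of the nonlinear term, then integrate the resulting Bernoulli-type inequality by separation of variables) is the standard route, and all the monotonicity and continuity claims you use to show that the relevant endpoint of the connected component must equal $t_1$ (resp.\ $t_0$) are justified since $Y\in W^{1,1}(I)$ has an absolutely continuous representative. The constant works out to $C(\gamma)=(2/(\gamma-1))^{1/(\gamma-1)}$, and dividing by $Y^\gamma$ is legitimate since on the component one has $Y>(2A/k)^{1/\gamma}\ge 0$, with strict positivity even when $A=0$.
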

  
  \begin{lem}(Parabolic Kato inequality.)\label{pKi}
  Let $\Omega$ be a bounded Lipschitz domain and $v\in C^{2,1}\big(\overline{Q}_T\big)$ 
  be a solution of $v_t-\Delta v=F$ in $Q_T:=\Omega\times (0,T)$. Then $v_+$ is a weak solution of 
  $$\partial_t  v_+-\Delta v_+\le F\chi_{\{v>0\}}\quad \hbox{in } Q_T,$$
namely, 
 for every $0\le\varphi\in C^1(\overline\Omega)$ and $a.e.~t\in(0,T)$,
  $$ \frac{d}{dt}\int_\Omega v_+(t)\varphi \,dx+\int_\Omega \nabla v_+\nabla\varphi\,dx
  \le \int_\Omega F\chi_{\{v>0\}}\varphi\,dx+\int_{\partial\Omega} \chi_{\{v>0\}}v_\nu \varphi\, d\sigma.$$
    \end{lem}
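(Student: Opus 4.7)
My plan is to prove the parabolic Kato inequality by a standard smooth-convex approximation of the positive part. Specifically, I will pick a family $\rho_\eps:\R\to[0,\infty)$ of $C^2$ convex functions satisfying $\rho_\eps(s)=0$ for $s\le 0$, $\rho_\eps(s)=s-\eps/2$ for $s\ge\eps$, and interpolating smoothly in between so that $0\le\rho_\eps'\le 1$ and $\rho_\eps''\ge 0$ on $\R$. With these choices, $\rho_\eps\to s_+$ uniformly on $\R$ and $\rho_\eps'\to\chi_{\{s>0\}}$ pointwise everywhere (adopting the convention $\chi_{\{s>0\}}(0)=0$).

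Setting $w_\eps:=\rho_\eps(v)\in C^{2,1}(\overline{Q}_T)$, the chain rule and the convexity of $\rho_\eps$ yield the pointwise inequality
$$\partial_t w_\eps-\Delta w_\eps=\rho_\eps'(v)(v_t-\Delta v)-\rho_\eps''(v)|\nabla v|^2\le\rho_\eps'(v)\,F\quad\hbox{in $Q_T$.}$$
Multiplying by the nonnegative test function $\varphi\in C^1(\overline\Omega)$ and applying Green's formula, I will obtain, for each $t\in(0,T)$,
$$\frac{d}{dt}\int_\Omega w_\eps\varphi\,dx+\int_\Omega\nabla w_\eps\cdot\nabla\varphi\,dx\le\int_\Omega\rho_\eps'(v)F\varphi\,dx+\int_{\partial\Omega}\rho_\eps'(v)v_\nu\varphi\,d\sigma,$$
the boundary term arising from the outer normal derivative $\partial_\nu w_\eps=\rho_\eps'(v)v_\nu$.

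It will then remain to pass to the limit $\eps\to 0$. Uniform convergence gives $w_\eps\to v_+$ uniformly on $\overline{Q}_T$, which handles the first term in the distributional sense in $t$. The crucial point is the identification of the limit of $\nabla w_\eps=\rho_\eps'(v)\nabla v$: by the classical Stampacchia lemma applied to the $C^1$ function $v$, one has $\nabla v=0$ almost everywhere on $\{v=0\}$, and consequently $\rho_\eps'(v)\nabla v\to\chi_{\{v>0\}}\nabla v=\nabla v_+$ pointwise a.e.~in $Q_T$. Since $|\rho_\eps'|\le 1$ and $v,\nabla v, F, v_\nu,\varphi$ are bounded on their respective domains, dominated convergence allows me to take limits in the volume and boundary integrals on the right-hand side as well as in the gradient integral on the left, yielding the announced inequality.

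The main (and indeed the only) delicate point is the handling of the set $\{v=0\}$, both inside $Q_T$ (for the identification $\nabla v_+=\chi_{\{v>0\}}\nabla v$ and thereby the convergence of the gradient term) and on $\partial\Omega$ (so that the boundary integral correctly picks up $\chi_{\{v>0\}}v_\nu$); in both cases the argument rests on Stampacchia's theorem combined with bounded pointwise convergence. Everything else reduces to routine applications of the dominated convergence theorem made possible by the smoothness of $v$ and the uniform bounds on $\rho_\eps, \rho_\eps'$.
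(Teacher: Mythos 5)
The paper states this lemma in the appendix without proof, listing it among the ``well-known auxiliary results used throughout the paper'', so there is no in-text argument to compare against. Your proof is the standard convex-approximation argument and it is correct: with the $C^2$ convex smoothing $\rho_\varepsilon$ of $s\mapsto s_+$ one gets the pointwise inequality $\partial_t\rho_\varepsilon(v)-\Delta\rho_\varepsilon(v)=\rho_\varepsilon'(v)F-\rho_\varepsilon''(v)|\nabla v|^2\le\rho_\varepsilon'(v)F$; multiplying by $\varphi\ge 0$ and applying Green's formula then yields the approximate weak inequality with the boundary flux $\rho_\varepsilon'(v)\,v_\nu$, and the limit $\varepsilon\to 0$ is handled as you say via $0\le\rho_\varepsilon'\le 1$, the pointwise convergence $\rho_\varepsilon'(v)\to\chi_{\{v>0\}}$, and the identification $\nabla v_+=\chi_{\{v>0\}}\nabla v$ a.e.\ coming from Stampacchia's lemma on level sets. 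One small point worth making explicit in the limiting step on the time-derivative term: since the remaining terms are uniformly bounded in $\varepsilon$ (thanks to $v\in C^{2,1}(\overline Q_T)$), the functions $t\mapsto\int_\Omega\rho_\varepsilon(v(t))\varphi\,dx$ are equi-Lipschitz, so their uniform limit $t\mapsto\int_\Omega v_+(t)\varphi\,dx$ is Lipschitz in $t$; passing to the limit in the integrated-in-time form of the approximate inequality, and then differentiating, produces the stated inequality for a.e.\ $t$, which is the precise form asserted in the lemma rather than merely a distributional one. With that remark included your argument is complete.
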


\end{document}